\documentclass[numbers=noenddot]{scrartcl}

\usepackage [latin1]{inputenc}
\usepackage{amsthm,amsmath,amssymb}
\usepackage[colorlinks=true,citecolor=blue,linkcolor=blue,urlcolor=blue]{hyperref}
\usepackage[dvipsnames]{xcolor}

\theoremstyle{plain}
\newtheorem{theorem}{Theorem}[section]
\newtheorem{lemma}[theorem]{Lemma}
\newtheorem{corollary}[theorem]{Corollary}

\theoremstyle{definition}

\newtheorem{example}[theorem]{Example}

\theoremstyle{remark}
\newtheorem{remark}[theorem]{Remark}

\usepackage[dvipsnames]{xcolor}
\usepackage[ulem=normalem,draft,authormarkup=none,deletedmarkup=uwave]{changes}
\definecolor{deletedgray}{rgb}{0.95, 0.95, 0.96}
\setdeletedmarkup{{\color{deletedgray}#1}}

\definechangesauthor[name=Katharina, color=RubineRed]{KL}
\definechangesauthor[name=Thomas, color=NavyBlue]{TA}
\definechangesauthor[name=Jonny, color=ForestGreen]{JP}
\setcommentmarkup{\mpar{\IfIsColored{\color{authorcolor}}\tiny \arabic{authorcommentcount}: #1}}

\newcommand{\R}{\mathbb{R}}
\renewcommand{\div}{\mathrm{div}}
\newcommand{\mpar}[1]{\marginpar{\scriptsize\raggedright\sloppy #1}}
\numberwithin{equation}{section}
\renewcommand{\u}{u}
\newcommand{\U}{U}
\newcommand{\y}{y}
\newcommand{\Ih}{\pi}

\newcommand{\Uad}{\U^{\mathrm{ad}}}

\newcommand{\abs}[1]{\lvert#1\rvert}
\newcommand{\Abs}[1]{\left\lvert#1\right\rvert}
\newcommand{\norm}[1]{\lVert#1\rVert}

\begin{document}
\title{Numerical analysis for the
  Stokes problem with non-homogeneous \\ Dirichlet boundary condition}
\author{Thomas Apel\thanks{\texttt{thomas.apel@unibw.de}, Universit\"at der
    Bundeswehr M\"unchen, Institute of Mathematics and Com\-pu\-ter-Based
    Simulation, 85577 Neubiberg, Germany.} \and 
Katharina Lorenz\thanks{\texttt{katharina.lorenz@unibw.de}, Universit\"at der
    Bundeswehr M\"unchen, Institute of Mathematics and Com\-pu\-ter-Based
    Simulation, 85577 Neubiberg, Germany.} \and 
Johannes Pfefferer\thanks{\texttt{johannes.pfefferer@unibw.de}, Universit\"at der
    Bundeswehr M\"unchen, Department of Electrical Power Systems and Information Technology, 85577 Neubiberg, Germany.}}
\maketitle

  \setlength{\parindent}{0pt}\setlength{\parskip}{1ex plus 0.5ex minus 0.5ex}
  \textbf{Abstract:} The Stokes problem with non-homogeneous Dirichlet boundary condition is solved numerically using conforming discretizations and an approximation of the boundary datum in the corresponding trace space.   Optimal discretization error estimates in various norms are derived.  
In the case of a homogeneous differential equation, all estimates of the velocity error are uniformly valid in the viscosity parameter for all conforming inf-sup stable discretizations.
The theory accounts for the influence of corner singularities in the case of a non-convex domain.   Several variants of the boundary data approximation are discussed.
  Moreover, the case of  boundary data with very low regularity is studied, where a weak solution does not exist. The well-posedness of the very weak solution is investigated, and optimal discretization error  estimates are derived.  Numerical tests confirm the theory.

  The compatibility condition for the boundary data is not necessary for well-posedness of the weak and very weak formulations but it ensures that the solution satisfies the continuity equation in the distributional sense. In the same spirit, the compatibility condition is not necessary for the approximating boundary data; a good approximation of the original boundary data is important.
  
  \textbf{Keywords:} Stokes equation, non-homogeneous Dirichlet boundary
  condition, non-convex domain, very weak solution, regularity of
  solutions, finite element approximation, pressure robustness

  \textbf{AMS Subject classification: } 65N30, 65N15, 76D07, 35B65
\newpage
\section{Introduction}

The investigation of the Stokes problem with non-homogeneous boundary conditions is motivated by the numerical solution of the Dirichlet boundary control problem 
\begin{align*} 
	\min_{(\y,\u)\in  Y\times \Uad} J(\y,\u) :\!&=
	\frac{1}{2}\|\y-\y_d\|_{L^2(\Omega)^d}^2 + \frac{\sigma}{2}
	\|\u\|_{L^2(\Gamma)^d}^2 \\ \text{subject to}\quad \y &= S\u \end{align*} with the control-to-state mapping $S:\Uad\subset L^2(\Gamma)^d\to  Y$, given by $\u\mapsto \y=S\u$, 
    \begin{subequations}\label{eq:stokesclassical}
        \begin{align}
        -\nu\Delta \y+\nabla p &= 0 \quad\text{in }\Omega\subset\R^d, \quad d=2,3, \label{eq:stokesclassical1}\\
        \nabla\cdot \y&=0\quad \text{in }\Omega, \label{eq:stokesclassical2}\\
        \y&=\u\quad\text{on }\Gamma=\partial\Omega\label{eq:stokesclassical3},
        \end{align}
    \end{subequations}
where $\sigma$ and $\nu$ are positive contants, $y_d$ is the prescribed desired state and $\Uad$ is the admissible set of controls. Hence we need to treat non-homogeneous Dirichlet data $\u\in H^t(\Gamma)^d$, $d=2,3$. Since the state and the control variable are denoted by $y$ and $u$ in large parts of the optimal control community we adopt this notation here, although optimal control will be the subject of a future paper only. 

This paper deals with the finite element solution of the Stokes problem \eqref{eq:stokesclassical}.
Since the problem is linear and since the case of homogeneous boundary conditions but non-ho\-mo\-geneous equations is widely studied in the literature, we simplify the presentation by considering homogeneous equations.   We distinguish between two cases for the non-homogeneous Dirichlet data $\u\in H^t(\Gamma)^d$: $t \geq \frac{1}{2}$, where a weak solution $(\y, p) \in H^1(\Omega)^d \times L^2_0(\Omega)$ exists, and $t<\frac{1}{2}$, where the existence of a weak solution cannot generally be expected. The latter case is important in our motivational application but also in other well-known examples like the lid-driven cavity problem. We allow the polygonal or polyhedral domain $\Omega\subset\R^d$ to be non-convex which introduces additional particularities.

The investigation starts in Subsection \ref{sec:2.1} with the weak formulation of the Stokes problem: Find 
$\y\in Y_*:=\{v\in H^1(\Omega)^d: v=\u\text{ a.\,e.~on }\Gamma\}$ and $p\in Q:=L^2_0(\Omega):=\left\{v\in L^2(\Omega):(v,1)=0\right\}$ such that
\begin{subequations}\label{eq:weakneui}
\begin{align}
  \nu(\nabla \y,\nabla v)-(\nabla\cdot v,p) &= 0 \quad\forall v\in  Y_0=H^1_0(\Omega)^d, \label{eq:weaki1}\\
  (\nabla\cdot \y,q) &=0 \quad\forall q\in Q. \label{eq:weaki2}
\end{align}
\end{subequations}
For $\u\in H^{\frac12}(\Gamma)^d$, there is a unique weak solution which solves \eqref{eq:stokesclassical1}  in the sense of distributions, see Lemma \ref{lem:2.1}. 
A recurring topic in the paper is the investigation of the role of the compatibility condition. 
One has to realize that \eqref{eq:weaki2} corresponds not only to \eqref{eq:stokesclassical2} but also to $\nabla\cdot \y=c=\mathrm{const}.$, and the weak solution solves this equation almost everywhere with $c=|\Omega|^{-1}\langle\u,n\rangle_\Gamma$. The compatibility condition
\begin{equation}\label{eq:compatibility}
\langle\u,n\rangle_\Gamma=0
\end{equation}
is not necessary for the existence of the weak solution $(\y,p)$ but for the property that $\y$ solves \eqref{eq:stokesclassical2} almost everywhere.

The discretization of problem \eqref{eq:weakneui} is based on conforming approximation spaces $Y_h\subset H^1(\Omega)^d$ and $Q_h\subset L^2_0(\Omega)$ such that the pair $(Y_{0h},Q_h)$ with $Y_{0h}=\{v\in Y_h:v|_\Gamma=0\}$ form an inf-sup stable pair, e.\,g.\ the Taylor--Hood element, the MINI element, the SMALL element \cite{BoffiBrezziFortin:13}, or the Scott--Vogelius element. The method is to approximate the boundary data $\u$ by $\u_h$ in the trace space $Y_h^\partial:=\mathrm{tr}\, Y_h$ and to seek the approximate solution $(\y_h,p_h)\in Y_{*h}\times Q_h$, where $Y_{*h}=\{v\in Y_h:v|_\Gamma=\u_h\}$, such that
\begin{align}\label{eq:v_h_i}
   \nu(\nabla \y_h,\nabla v_h)-(\nabla\cdot v_h,p_h)-(\nabla\cdot \y_h,q_h) & =0 \quad\forall
   (v_h,q_h)\in Y_{0h}\times Q_h .
\end{align}
For the approximation $\u_h$ of $\u$ we discuss the $L^2(\Gamma)^d$-projection and (quasi-)inter\-polants in Subsection \ref{sec:reg_error_neu}.  Our main message is that a sufficiently good approximation order of the discretization error $\u-\u_h$ is important while it is not important whether or not the discrete version of the compatibility condition \eqref{eq:compatibility} is satisfied. Nevertheless, we show how the condition $\langle\u_h,n\rangle_\Gamma=0$ can be enforced.

Section \ref{sec:NumerikWeak} is devoted to the analysis of the error between the discrete solution $(\y_h,p_h)$ and the weak solution $(\y,p)$ under general smoothness assumptions for the solution $(\y,p)$ and the boundary data $\u$. We start with the error estimates for $\norm{\nabla(\y-\y_h)}_{L^2(\Omega)^{d\times d}}$ and $\norm{p-p_h}_{L^2(\Omega)}$, see Theorem \ref{th:approxweak}, and proceed with weaker norms $\norm{\y-\y_h}_{H^{1-s}(\Omega)^d}$ and $\norm{p-p_h}_{(H^{s}(\Omega))'}$, $s\in(\frac12,1]$,  see Theorem \ref{lem:L2velocity}. The approximation order depends on the choice of the approximation $\u_h$ of $\u$, the available regularity of the data $\u$, and regularity issues due to corner singularities. 
A particular result is that all estimates for the velocity error are valid uniformly in the viscosity parameter $\nu$ for all conforming inf-sup stable discretizations. This property is achieved in problems with non-homogeneous equation \eqref{eq:stokesclassical1} only with pressure robust discretizations, see, e.\,g., \cite{EickmannScottTscherpel:25,LedererLinkeMerdonSchoeberl:17,LinkeMerdon:16}. Therefore we call our estimates pressure robust as well.

Section \ref{sec:veryweakformulation} addresses the  very weak formulation of problem \eqref{eq:stokesclassical}, presenting existence and regularity results in Theorem \ref{thm:vwf_wellposed} and Corollary \ref{cor:vwf_wellposed}. The formulation is defined by the method of transposition \cite{Lions1968}, i.\,e.\  by an additional integration by parts: Find $(\y,p) \in \mathcal{Y} $  such that
\begin{align}\label{eq:bilinearform_i}
	a((\y,p),(v,q)):=\langle \nu\y,-\Delta v+\nabla q\rangle-\langle \nabla\cdot v,p\rangle =\langle \nu\u,qn-\partial_n v\rangle_\Gamma \quad\forall (v,q) \in \mathcal{V}
\end{align}
with appropriate spaces $\mathcal{Y}$ and $\mathcal{V}$. A key part is to prove inf-sup conditions for the bilinear form in Lemma \ref{lem:infsup}. Using the Babu\v{s}ka--Lax--Milgram theorem we infer the existence, uniqueness, and regularity of the solution, see Theorem \ref{thm:vwf_wellposed}. Depending on the regularity of $\u\in H^t(\Gamma)^d$, the method of proof is different for $t\in(-\frac12,0)$, $t=0$ and $t\in(0,\frac12]$. Since $t=\frac12$ is included we get in particular that the weak solution is also a very weak solution. Again, the compatibility condition \eqref{eq:compatibility} is not necessary for these results, only for $y$ solving \eqref{eq:stokesclassical2} in the distributional sense, see Lemma \ref{lem:4.6}. 

For the numerical solution of \eqref{eq:bilinearform_i} we assume that the boundary datum $\u$ is at least in $L^2(\Gamma)^d$ such that we can obtain an approximation $u_h$ by the $L^2(\Gamma)^d$-projection or a quasi-interpolant, and proceed with the fomulation \eqref{eq:v_h_i}. For the error analysis in Section \ref{sec:NAveryweak} we have to use a weak norm such that $(\y,p)$ is in the corresponding space. We obtain optimal estimates in Theorem \ref{th:erresty-y_h} which are confirmed by numerical tests in Section \ref{sec:test}. Again, the estimates for the velocity error are valid uniformly in the viscosity parameter $\nu$.

Sections \ref{sec:NumerikWeak}--\ref{sec:NAveryweak} contain in each first subsection the main results with examples and extensive discussion. The proofs are postponed to further subsections.

\begin{remark}[conforming discretization]
  An alternative numerical method could be to discretize \eqref{eq:bilinearform_i} by using spaces $\mathcal{ Y}_h\subset\mathcal{Y}$ and $\mathcal{V}_h\subset\mathcal{V}$ but this discretization requires $C^1$-elements for $\mathcal{V}_h$ and does not improve accuracy since our error estimates are optimal.\qed
\end{remark}

\bigskip 

As written above, our interest in non-homogeneous Dirichlet boundary conditions arose from a boundary control problem with the particularity that the data are non-smooth, in particular $\u\not\in H^{\frac12}(\Gamma)^d$. By scanning standard textbooks we found that even the case $\u\in H^t(\Gamma)^d$, $t\ge\frac12$, is not well treated in the literature. The typical approach is to homogenize the conditions on the continuous level, i.\,e., to find an extension $E\u\in H^1(\Omega)^d$ and to seek $\y_0=\y-E\u$ with homogeneous boundary conditions, see e.\,g.\ \cite[Rem. 8.2.2]{BoffiBrezziFortin:13} or \cite[Re. 4.9]{John:16}. The discussion of non-homogeneous Dirichlet boundary conditions was revitalized by authors considering boundary optimal control problems, see the references below.

Our treatment of non-smooth Dirichlet boundary conditions is inspired by our investigation of the Dirichlet problem for the Poisson equation with $L^2(\Gamma)$ data and non-convex polygonal domains, see \cite{ApelNicaisePfefferer:16,ApelNicaisePfefferer:17}, and the papers \cite{DuranGastaldiLombardi:20,HamoudaTemamZhang:17,GongHuMateos:20,GongMateosSinglerZhang:22,ZhouGong:22}. 
Apel, Nicaise, and Pfefferer \cite{ApelNicaisePfefferer:16} investigated the method of transposition in the proper function spaces to cast non-convex domains, a novel regularization approach and compared it with another formulation due to Berggren \cite{berggren:04}. They also investigated the finite element approximation including mesh grading or dual singular functions.
Hamouda, Temam, and Zhang \cite{HamoudaTemamZhang:17} introduce the transposition approach for the Stokes problem but consider convex or smooth domains only and do not consider discretization aspects. 
Duran, Gastaldi, and Lombardi \cite{DuranGastaldiLombardi:20} consider the regularization approach and discretization aspects assuming a convex domain, but their discussion of the condition $\int_\Gamma \u^h\cdot n=0$ is not completely correct, see Remark \ref{rem:approximationvariants}. 
Gong, Mateos, Singler, and Zhang \cite{GongMateosSinglerZhang:22} introduce a proper formulation based on the transposition approach but with function spaces other than ours.  Moreover, they show a non-optimal discretization error estimate.
An almost optimal error estimate was derived by Zhou and Gong \cite{ZhouGong:22} but assuming convexity of the domain and condition \eqref{eq:compatibility}. The most recent contribution is by Eickmann, Scott, and Tscherpel \cite{EickmannScottTscherpel:25} who analyze the discretization of the weak formulation. Since they allow for a non-homogeneous equation \eqref{eq:stokesclassical1}, their general estimate cannot be pressure robust. In order to establish a pressure robust method, they analyze then the Scott--Vogelius method but, different to us, they stress the compatibility condition.

Gunzburger and Manservisi \cite{GunzburgerManservisi:00} consider a boundary control problem for the non-stationary Navier--Stokes equations. Their setting is quite smooth; they assume a $C^2$ domain and seek controls at least with regularity such that a weak solution exists.
John and Wachsmuth \cite{JohnWachsmuth:09} consider the Stokes- and Navier--Stokes problem  in a $C^2$-domain as well. A very weak solution of the Stokes problem is introduced where the existence is traced back to a paper of Amrouche and Girault \cite{AmroucheGirault:91}. The discretization is not analyzed but examples are calculated with Taylor--Hood elements.
Due to the assumptions on the domain, the very weak solution was defined with test functions $(v,q)\in H^2(\Omega)\times H^1(\Omega)$ \cite{AmroucheGirault:91,JohnWachsmuth:09}. In all three of the mentioned papers, the divergence constraint is included in the solution and/or test space, i.\,e., solenoidal spaces $\{v\in H^2(\Omega)^d:\nabla\cdot v=0,v|_\Gamma=0\}$ \cite{JohnWachsmuth:09} or $\{v\in H^1_0(\Omega):\nabla\cdot v=0\}$ \cite{GunzburgerManservisi:00} or $\{v\in L^r(\Omega):\nabla\cdot v=0\}$ \cite{JohnWachsmuth:09} are considered. The compatibility condition $\langle\u,n\rangle_\Gamma=0$ is always required, in the case of optimal boundary control directly in the space where the control is seeked, namely in $\{\u\in H^1(\Gamma)^d: \langle\u,n\rangle_\Gamma=0\}$ \cite{GunzburgerManservisi:00} or $\{\u\in L^2(\Gamma)^d: \langle\u,n\rangle_\Gamma=0\}$ \cite{JohnWachsmuth:09}.

In the whole paper, the letter $c$ will denote a generic constant that may change from line to line.

\section{\label{sec:basic}Basic ideas}
\subsection{\label{sec:2.1}Weak formulation}

We start with the Stokes problem \eqref{eq:stokesclassical} in weak formulation. 
Let $\Omega\in\R^d$, $d=2,3$, be a polygonal or polyhedral domain.
Convexity is not assumed. We use the standard Sobolev spaces
$H^s(\Omega)$, $H^s_0(\Omega)$, and $H^s(\Gamma)$ and denote 
by $(.,.)$ the $L^2(\Omega)$ and the $L^2(\Omega)^d$ scalar product, by
$(.,.)_\Gamma$ the $L^2(\Gamma)$ and the $L^2(\Gamma)^d$ scalar product.
Similarly, we denote by $\langle.,.\rangle$ and $\langle.,.\rangle_\Gamma$ duality products.
Furthermore,  we use the spaces
\begin{align*}
  Y_0&=H^1_0(\Omega)^d, \quad Y_*=\{v\in H^1(\Omega)^d: v=\u\text{ a.\,e.~on }\Gamma\},\\
  \quad Q&=L^2_0(\Omega):=\left\{v\in L^2(\Omega):(v,1)=0\right\}.
\end{align*}
The weak formulation is: Find $\y\in Y_*$ and $p\in Q$ such that
\begin{subequations}\label{eq:weakneu}
\begin{align}
  \nu(\nabla \y,\nabla v)-(\nabla\cdot v,p) &= 0 \quad\forall v\in Y_0, \label{eq:weak1}\\
  (\nabla\cdot \y,q) &=0 \quad\forall q\in Q. \label{eq:weak2}
\end{align}
\end{subequations}
\begin{lemma}[existence of weak solution]\label{lem:2.1}
There exists a unique solution $(\y,p)\in Y_*\times Q$ of the system \eqref{eq:weakneu} for
$\u\in H^{\frac12}(\Gamma)^d$ satisfying
\[
    \nu\norm{\y}_{H^1(\Omega)^d}+\norm{p}_{L^2(\Omega)}\le c\nu\norm{\u}_{H^{\frac12}(\Gamma)^d}.
\]
\end{lemma}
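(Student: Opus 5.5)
We homogenize the boundary condition through a lifting and then apply the standard saddle-point theory. Since $\Omega$ is a Lipschitz (polygonal or polyhedral) domain, the trace operator $H^1(\Omega)^d\to H^{\frac12}(\Gamma)^d$ is surjective and has a bounded right inverse. We therefore pick an extension $E\u\in H^1(\Omega)^d$ with $E\u|_\Gamma=\u$ and $\norm{E\u}_{H^1(\Omega)^d}\le c\norm{\u}_{H^{\frac12}(\Gamma)^d}$, and make the ansatz $\y=\y_0+E\u$ with $\y_0\in Y_0$. Problem \eqref{eq:weakneu} then becomes: find $(\y_0,p)\in Y_0\times Q$ such that
\begin{align*}
  \nu(\nabla \y_0,\nabla v)-(\nabla\cdot v,p) &= -\nu(\nabla E\u,\nabla v) &&\forall v\in Y_0,\\
  (\nabla\cdot \y_0,q) &= -(\nabla\cdot E\u,q) &&\forall q\in Q .
\end{align*}

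This is a saddle-point problem of Brezzi type. The form $a(w,v)=\nu(\nabla w,\nabla v)$ is coercive on all of $Y_0$ with constant $\nu$, by the Poincar\'e inequality. The form $b(v,q)=-(\nabla\cdot v,q)$ satisfies the continuous inf-sup condition on $Y_0\times L^2_0(\Omega)$, a classical result valid on Lipschitz domains through the Ne\v{c}as/Bogovski\u{\i} right inverse of the divergence; convexity is not needed. Both right-hand sides are bounded functionals: the first has norm at most $c\nu\norm{\u}_{H^{\frac12}(\Gamma)^d}$ on $Y_0$, and the second has norm at most $c\norm{\u}_{H^{\frac12}(\Gamma)^d}$ on $Q$. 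Brezzi's theorem therefore yields a unique $(\y_0,p)$.

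The one delicate point is the scaling in $\nu$, which we handle by solving the problem in two steps rather than quoting the generic bound. By the inf-sup condition there is $z\in Y_0$ with $(\nabla\cdot z,q)=-(\nabla\cdot E\u,q)$ for all $q\in Q$ and $\norm{z}_{H^1}\le c\norm{\u}_{H^{\frac12}}$. One may even take $\nabla\cdot z=-\nabla\cdot E\u+|\Omega|^{-1}\langle\u,n\rangle_\Gamma$, since the constant is orthogonal to $Q$. We then set $w=\y_0-z$, which lies in the divergence-free subspace $V=\{v\in Y_0:(\nabla\cdot v,q)=0\ \forall q\in Q\}$ and satisfies
\[
  \nu(\nabla w,\nabla v)=-\nu(\nabla(E\u+z),\nabla v)\qquad\forall v\in V.
\]
Lax--Milgram gives $\norm{w}_{H^1}\le c\norm{\u}_{H^{\frac12}}$, independently of $\nu$, and hence $\nu\norm{\y}_{H^1}\le c\nu\norm{\u}_{H^{\frac12}}$. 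The pressure is recovered from the inf-sup condition applied to the functional $v\mapsto\nu(\nabla\y,\nabla v)$, which vanishes on $V$; this gives $\norm{p}_{L^2}\le c\nu\norm{\nabla\y}_{L^2}\le c\nu\norm{\u}_{H^{\frac12}}$.

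Uniqueness follows because the difference of two solutions solves the homogeneous problem, whose solution is zero by the same estimate. The solution is also independent of the chosen extension: two extensions differ by an element of $Y_0$, which is absorbed into $\y_0$, and the problem is posed directly for $\y\in Y_*$. The compatibility condition \eqref{eq:compatibility} is never used: test functions are restricted to $L^2_0$, so only $\nabla\cdot\y=\mathrm{const}$ is enforced.
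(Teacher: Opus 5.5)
Your proof is correct and follows the same route as the paper: you lift the datum with a bounded extension $E$, reduce to a homogeneous saddle-point problem for $(\y_0,p)\in Y_0\times Q$, and solve it using the continuous inf-sup condition, without ever needing the compatibility condition. The only difference is that you derive the $\nu$-robust bound by hand (right inverse of the divergence, Lax--Milgram on the divergence-free subspace, pressure from inf-sup). The paper instead multiplies the divergence equation by $\nu$, corrects its right-hand side by the constant $\delta=\abs{\Omega}^{-1}(\nu\nabla\cdot E\u,1)=\nu\abs{\Omega}^{-1}\langle\u,n\rangle_\Gamma$ (your mean correction scaled by $\nu$), and reads the scaling directly off the cited estimate $\nu\norm{\y_0}_{Y_0}+\norm{p}_{Q}\le c(\norm{f}_{Y_0'}+\norm{g}_{Q})$.
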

\begin{proof}
  To show the assertion we use a classical lifting argument. There is a linear and continuous operator $E\colon H^{\frac12}(\Gamma)^d\to H^1(\Omega)^d$ (the extension operator) such that for $\u\in H^{\frac12}(\Gamma)^d$ there holds $E\u|_\Gamma=\u$, see e.\,g.\ \cite{Ding:96}. Let us define $f\in Y_0'$ by
  \[
    \langle f,v\rangle:=-(\nu\nabla E\u,\nabla v)
  \]
  and $g\in Q$ by
  \[
    g:=-\nu\nabla\cdot E\u+\delta, \quad \delta=\abs{\Omega}^{-1}(\nu\nabla\cdot E\u,1),
  \]
  such that
  \[
    (g,q)=(-\nu\nabla\cdot E\u,q)+\left(\delta,q\right)=(-\nu\nabla\cdot E\u,q)\quad\forall q\in Q.
  \]
  Obviously, there holds $(g,1)=0$.
  As a consequence, according to \cite[Thm. 8.2.1]{BoffiBrezziFortin:13}, see also \cite[Rem. 4.9]{John:16}, there exists a unique pair $(\y_0,p)\in Y_0\times Q$ satisfying
  \begin{align*}
    \nu(\nabla \y_0,\nabla v)-(\nabla\cdot v,p) &= \langle f,v\rangle \quad\forall v\in Y_0, \\
    \nu(\nabla\cdot \y_0,q) &=(g,q) \quad\forall q\in Q,
  \end{align*}
  and
  \begin{equation}\label{eq:apriory0}
    \nu\norm{\y_0}_{ Y_0}+\norm{p}_{Q}\le c \left(\norm{f}_{ Y_0'}+\norm{g}_{Q}\right)\le c\nu\norm{\nabla E\u}_{L^2(\Omega)^{d\times d}}\le c\nu\norm{\u}_{H^{\frac12}(\Gamma)^d},
  \end{equation}
  where we used the continuity of the extension operator $E$ in the last step.
  Let us now define $\y:=E\u+\y_0\in Y_*$. By rearranging terms in the previous system, we obtain that the pair $(\y,p)$ solves the system \eqref{eq:weakneu}. Moreover, there holds the a priori estimate of the assertion due to \eqref{eq:apriory0} and the continuity of the extension operator $E$.
\end{proof}
\begin{remark}[on the viscosity parameter $\nu$]\label{rem:vissol}
     Iff $(\bar y, \bar p)$ solves problem~\eqref{eq:weakneu} with $\nu=1$ then $(y, p)=(\bar y, \nu\bar p)$ obviously solves problem~\eqref{eq:weakneu} with general $\nu>0$.
\qed
\end{remark}
\begin{remark}[on solving the original system \eqref{eq:stokesclassical}]\label{remark:origsys}
Integration by parts shows that the solution of \eqref{eq:weakneu} satisfies the momentum equation \eqref{eq:stokesclassical1}
  in the distributional sense, since $C_0^\infty(\Omega)$ is dense in $H^1_0(\Omega)$. In order to investigate the divergence equation \eqref{eq:stokesclassical2}, it is necessary that equation \eqref{eq:weak2} holds for all $q\in
L^2(\Omega)=Q\oplus\mathcal{P}_0(\Omega)$, where $\mathcal{P}_0(\Omega)$ denotes the space of constant functions in $\Omega$. However, testing the left hand side of \eqref{eq:weak2} with $q\in\mathcal{P}_0(\Omega)$ and integrating by parts shows that
\[
    (\nabla\cdot \y,q) = \left(\u_\Gamma,q\right)  \quad\forall q\in \mathcal{P}_0(\Omega)
\]
with
\[
    \u_\Gamma:=\abs{\Omega}^{-1}\langle\u,n\rangle_\Gamma\in \mathcal{P}_0(\Omega),
\]
where $n$ denotes the outward normal to $\Gamma$. This, together with \eqref{eq:weak2} and the definition of $Q$, leads to
\[
    (\nabla\cdot \y,q) = \left(\u_\Gamma,q\right)  \quad\forall q\in L^2(\Omega)
\]
and hence to
\[
  \nabla\cdot\y=\u_\Gamma \quad\text{a.\,e. in } \Omega.
\]
Therefore, only if the additional compatibility condition
\begin{align*}
  \u_\Gamma = 0\quad\Leftrightarrow\quad\langle\u,n\rangle_\Gamma=0
\end{align*}
holds, the divergence equation \eqref{eq:stokesclassical2} is satisfied almost everywhere. The boundary condition \eqref{eq:stokesclassical3} is automatically fulfilled as it is incorporated into the affine space $Y_*$. \qed
\end{remark}
\begin{remark}[on higher regularity]
    Depending on the domain and on the regularity of the Dirichlet boundary datum $u$, it is also possible to deduce higher regularity for the solution $(\y,p)$. \qed
\end{remark}

\subsection{\label{sec:regularity}Regularity of a dual problem with homogeneous boundary condition}

Later on, we will
need a detailed regularity statement of a dual problem. It also motivates the function spaces which we use, and limitations on some parameters. To this end, introduce the space
\begin{align}\label{def:V02beta}
  V^{0,2}_\beta(\Omega)=\{v: r^\beta v\in L^2(\Omega)\}, \quad\beta\in\R,
 \end{align}
where $r=r(x)$ is the distance of $x\in\Omega$ to the set
$A_0(\Omega)$ of vertices of $\Omega$ in the two-dimensional case, and
the distance of $x\in\Omega$ to the set $A_1(\Omega)$ of edges of
$\Omega$ in the three-dimensional case.

Consider the weak solution $(v,q)$ to the Stokes problem
\begin{subequations}\label{eq:stokes_dual_neu}
  \begin{align}
    -\Delta v +\nabla q &= f\quad\text{in }\Omega,\label{eq:stokes_dual_neu1}\\
    \nabla \cdot v &=g\quad\text{in }\Omega,\label{eq:stokes_dual_neu2}\\
    v&=0\quad\text{on }\Gamma,\label{eq:stokes_dual_neu3}
  \end{align}
\end{subequations}
which is defined as follows: For $f\in Y_0'$ and $g\in Q$ find $(v,q)\in Y_0\times Q$ satisfying
\begin{subequations}\label{eq:stokes_dual_neu_weak}
\begin{align}
(\nabla v,\nabla \varphi)-(\nabla\cdot \varphi,q) &= \langle f,\varphi\rangle \quad\forall \varphi\in Y_0, \\
(\nabla\cdot v,\psi) &=(g,\psi) \quad\forall \psi\in Q.
\end{align}
\end{subequations}
The existence of a unique solution is granted according to \cite[Thm. 8.2.1]{BoffiBrezziFortin:13} or \cite[Thm. 2.4, Rem. 2.5]{temam:79}, see also \cite[Rem. 4.9]{John:16}. Similarly to Remark~\ref{remark:origsys}, this weak solution solves the original system \eqref{eq:stokes_dual_neu} in the distributional sense only if the compatibility condition $(g,1)=0$ is fulfilled, which is satisfied due to $ g \in Q$. If the compatibility condition $(g,1)=0$ is ignored, there is still a unique weak solution, which, however, only satisfies $\nabla\cdot v = g-\abs{\Omega}^{-1}(g,1)$ almost everywhere in $\Omega$.

We now review higher regularity results of Kellogg and Osborn \cite{KelloggOsborn:76}, Dauge \cite{dauge:89}, as well as Maz'ya and Rossmann \cite{MazyaRossmann:07, MazyaRossmann:10}. In general, the regularity of the
solution of \eqref{eq:stokes_dual_neu}  is  limited by corner and possibly edge singularities, which are characterized by the parameter $\xi\in\R_+$, see Remark \ref{rem:xi}. Important for us is that $\xi>1$ for convex domains and
$\xi\in(\frac12,1)$ for non-convex domains.
Let \[ s\in[0,\xi)\cap [0,1],\quad s\not=\frac12.\]   Note that it means $s\in [0,1]$, $s\not=\frac12$, for
convex domains and $s\in[0,\xi)$, $s\not=\frac12$, for non-convex domains. With this we are able to state the  regularity result: 
If $(v,q)\in Y_0\times Q$ is the solution of the system \eqref{eq:stokes_dual_neu} (possibly under the assumption $(g,1)=0$) and if the data $f$ and $g$ satisfy $f\in H^{s-1}(\Omega)^d$ and $g\in H^{s}(\Omega)$ and in case of $s=1$ additionally $g\in V_{-1}^{0,2}(\Omega)$, then there holds $(v,q)\in H^{1+s}(\Omega)^d\times H^s(\Omega)$, see
\cite[Theorems 5.5 and 9.20]{dauge:89} for $s\in(0,1]$, $s\neq\frac12$, and also
\cite[Theorem 2]{KelloggOsborn:76} for $s=1$ and $d=2$ and
\cite[Section 5.5]{MazyaRossmann:07} in combination with \cite[(3.2.1) on page 90]{MazyaRossmann:10} for $s=1$ and $d=3$. Moreover, there is the estimate
\begin{align}
  \norm{v&}_{H^{1+s}(\Omega)^d}+\norm{q}_{H^s(\Omega)} \notag
  \\&\le c
  \begin{cases}
  \norm{f}_{H^{s-1}(\Omega)^d}+\norm{g}_{H^{s}(\Omega)}
  &\text{for }s\in[0,\xi)\cap [0,1),\ s\ne\frac12,\\
  \norm{f}_{L^2(\Omega)^d}+\norm{g}_{H^{1}(\Omega)}
  +\norm{g}_{V_{-1}^{0,2}(\Omega)}
  &\text{for }s=1 \text{ and }\xi>1.
  \end{cases}
  \label{eq:aprioridauge}
\end{align}
If $s\in(0,1)$ this estimate is a consequence of the Fredholm property of the Stokes operator, which is outlined in \cite[Theorem 3.3]{dauge:89}. For $s=0$ it follows from e.\,g.\ \cite[Thm. 8.2.1]{BoffiBrezziFortin:13} or \cite[Thm. 2.4, Rem. 2.5]{temam:79}. For $s=1$ we distinguish between $d=2$ and $d=3$. In the first case, the a priori estimate is stated in \cite[Theorem 2]{KelloggOsborn:76}. In the second case, the estimate can be deduced from \cite[Theorem 3.13]{MazyaRossmann:07}.

\begin{remark}[more information on $\xi$] \label{rem:xi}
  In the two-dimensional case, denote
  $\xi_1(\omega)=\min_k\mathrm{Re}(\lambda_k(\omega))$, where
  $\lambda_k(\omega)\in\mathbb{C}$ are the roots with
  $\mathrm{Re}(\lambda_k(\omega))\ge0$ of
  \[
    \lambda^{-2}(\lambda-1)^{-2}\left(\sin^2(\lambda\omega)-\lambda^2\sin^2\omega\right)=0.
  \]
  The meaning is, in simplified words, that the solution of
  \eqref{eq:stokes_dual_neu} behaves near a vertex with opening angle
  $\omega$ like $v\sim r^{\xi_1(\omega)}$, $q\sim
  r^{\xi_1(\omega)-1}$.  To get an impression, let us state that
  $\xi_1(\omega)>\frac\pi\omega$ if $\omega\in(0,\pi)$ and
  $\xi_1(\omega)\in(\frac12,\frac\pi\omega)$ if $\omega\in(\pi,2\pi)$,
  see \cite[Sect. 5.1]{dauge:89}. Denote by $\omega_x$ the interior
  angle of $\Omega$ at the vertex $x\in A_0(\Omega)$, then
  $\xi=\min\{\xi_1(\omega_x):x\in A_0(\Omega)\}$.  

  In the three-dimensional case, the minimum of $\xi_1(\omega)$ and
  $\pi/\omega$ describes the regularity near an edge with opening
  angle $\omega$, and there is another number $s_x\in\R$ describing
  the regularity near a vertex $x\in A_0(\Omega)$. There holds $s_x>1$
  if $\Omega$ is convex and $s_x>\frac12$ in the general case, see
  \cite[Def.~9.19]{dauge:89} and the subsequent text. The
  characteristic value is now defined by $\xi=\min\big\{ 
  \min\{\xi_1(\omega_x), \pi/\omega_x :x\in A_1(\Omega)\}, \min\{s_x:x\in A_0(\Omega)\}
  \big\}$, see \cite[Thm.~9.20]{dauge:89}. \qed
\end{remark}

Before we close this section, we introduce and discuss for later usage in Section~\ref{sec:veryweakformulation} the domain of the Stokes operator, i.\,e.\ the differential operator associated with~\eqref{eq:stokes_dual_neu_weak}. Using
\begin{equation}\label{eq:Y_s}
  \mathcal{ Y}_s:=
  \begin{cases}
    H^{1-s}_0(\Omega)^d\times (H^{s}(\Omega)\cap L_0^2(\Omega))'&\text{if }s\in[0,1),\, s\neq \frac12,\\
    L^2(\Omega)^d\times (H^{1}(\Omega)\cap L_0^2(\Omega)\cap V_{-1}^{0,2}(\Omega))'&\text{if } s=1,
  \end{cases}
\end{equation}
such that 
\begin{equation}\label{eq:V_s}
  \mathcal{ Y}_s'=
  \begin{cases}
    H^{s-1}(\Omega)^d\times (H^{s}(\Omega)\cap L_0^2(\Omega))&\text{if }s\in[0,1),\, s\neq \frac12,\\
    L^2(\Omega)^d\times (H^{1}(\Omega)\cap L_0^2(\Omega)\cap V_{-1}^{0,2}(\Omega))&\text{if } s=1,
  \end{cases}
\end{equation}
the domain of the Stokes operator in $\mathcal{Y}_s'$ is defined by
\begin{align*}
    \mathcal{V}_s &:=\big\{(v,q)\in H^1_0(\Omega)^d\times L^2_0(\Omega)\colon 
  (-\Delta v+\nabla q ,\nabla \cdot v)\in \mathcal{Y}_s'\big\}
\end{align*}
for $s\in[0,1]\setminus\{\frac12\}$, where $-\Delta v+\nabla q$ is understood in the distributional sense. This means, $\mathcal{V}_s$ with $s\in[0,1]\setminus\{\frac12\}$ contains all solutions $(v,q)\in H^1_0(\Omega)^d\times L^2_0(\Omega)$ of \eqref{eq:stokes_dual_neu_weak} with right hand side $(f,g)\in \mathcal{Y}_s'$. In particular, there holds
\begin{equation}\label{eq:dual_distri}
\langle -\Delta v + \nabla q,\varphi \rangle +\langle \nabla \cdot v, \psi \rangle = \langle f,\varphi\rangle +\langle g, \psi \rangle \quad\forall (\varphi,\psi)\in \mathcal{ Y}_s.
\end{equation}
Corresponding norms are defined by
\begin{equation}\label{eq:normYs}
    \norm{(\y,p)}_{\mathcal{ Y}_s}
    :=
    \begin{cases}
        \norm{\y}_{H^{1-s}(\Omega)^d}+ \norm{p}_{(H^s(\Omega)\cap L^2_0(\Omega))'}&\text{if }s\in[0,1),\, s\neq \frac12,\\
        \norm{\y}_{L^2(\Omega)^d}+\norm{p}_{(H^1(\Omega)\cap L^2_0(\Omega)\cap V_{-1}^{0,2}(\Omega))'}&\text{if } s=1,
    \end{cases}
\end{equation}
\[
    \norm{(\y,p)}_{\mathcal{ Y}_s'}
    :=
    \begin{cases}
        \norm{\y}_{H^{s-1}(\Omega)^d}+ \norm{p}_{H^s(\Omega)}&\text{if }s\in[0,1),\, s\neq \frac12,\\
        \norm{\y}_{L^2(\Omega)^d}+\norm{p}_{H^1(\Omega)}+\norm{p}_{V_{-1}^{0,2}(\Omega)}&\text{if } s=1,
    \end{cases}
\]
and 
\[
    \norm{(v,q)}_{\mathcal{V}_s}=
  \norm{v}_{H^1_0(\Omega)^d}+\norm{q}_{L^2(\Omega)} +\norm{(-\Delta v +\nabla q,\nabla\cdot v)}_{\mathcal{ Y}_s'}
\]
(the graph norm) for all $s\in[0,1]\setminus\{\frac12\}$. By the regularity result \eqref{eq:aprioridauge} with $s=0$ there holds 
\[
  \norm{v}_{H^1_0(\Omega)^d}+\norm{q}_{L^2(\Omega)}\le c \norm{(-\Delta v +\nabla q,\nabla\cdot v)}_{\mathcal{ Y}_0'}\le c \norm{(-\Delta v +\nabla q,\nabla\cdot v)}_{\mathcal{ Y}_s'}
\]
for all $(v,q)\in \mathcal{V}_s$ with $s\in[0,1]\setminus\{\frac12\}$. As a direct consequence we obtain
\begin{align}\label{eq:normVs}
  \norm{(v,q)}_{\mathcal{V}_s}\sim\norm{(-\Delta v +\nabla q,\nabla\cdot v)}_{\mathcal{ Y}_s'}
\end{align}
for all $(v,q)\in \mathcal{V}_s$ with $s\in[0,1]\setminus\{\frac12\}$. Hence, for all $(v,q)\in \mathcal{V}_s$ with $s\in[0,\xi)\cap [0,1]\setminus\{\frac12\}$ there holds according to~\eqref{eq:aprioridauge}
\begin{equation}
  \norm{v}_{H^{1+s}(\Omega)^d}+\norm{q}_{H^s(\Omega)}\le c \norm{(-\Delta v +\nabla q,\nabla\cdot v)}_{\mathcal{ Y}_s'}\le c \norm{(v,q)}_{\mathcal{V}_s}. \label{eq:apriori}
\end{equation}

\subsection{\label{sec:discr}Discretization}

We consider finite-dimensional subspaces $Y_h\subset H^1(\Omega)^d$ and $Q_h\subset Q = L^2_0(\Omega)$ with parameter $h>0$, and set
\begin{align*}
  Y_{0h}&:=\{v_h\in Y_h:v_h|_\Gamma=0\},\quad  Y_h^\partial:=Y_h|_\Gamma, \quad
  \tilde Y_h^\partial:=\{v_h\in Y_h^\partial: \langle v_h,n\rangle_\Gamma=0\},\\
  \tilde Q_h&:=Q_h\oplus\mathcal{P}_0(\Omega).
\end{align*}
The pair $(Y_{0h},Q_h)$ is assumed to satisfy the discrete inf-sup
condition,
\begin{align}\label{eq:inf-sup-discrete}
  \exists \alpha>0: \quad
  \sup_{v_h\in Y_{0h}\setminus\{0\}}\frac{(\nabla\cdot v_h,q_h)}{\|v_h\|_{H^1_0(\Omega)^d}}\ge 
  \alpha \|q_h\|_{L^2(\Omega)}\quad\forall q_h\in Q_h.
\end{align}
Concerning the approximation properties of the spaces $Y_h$ and $\tilde Q_h$ we assume that there is a number $k\in\mathbb{N}$ such that for all $r\in\R$, $r\ge0$, there holds
\begin{subequations}\label{eq:approxYhQh}
\begin{align}
  \inf_{v_h\in Y_h}\norm{v-v_h}_{H^1(\Omega)^{d}}&\le 
  ch^{\min(k,r)} \norm{v}_{H^{r+1}(\Omega)^d}&&\forall v\in H^{r+1}(\Omega)^d, \label{eq:approxYh} \\
  \inf_{\tilde q_h\in \tilde Q_h}\norm{q-\tilde q_h}_{L^2(\Omega)}&\le 
  ch^{\min(k,r)} \norm{q}_{H^r(\Omega)} &&\forall q\in H^r(\Omega). \label{eq:approxQh}
\end{align}
\end{subequations}
\begin{example}[value of $k$ for several elements]\label{ex:spaces}
We have $k=1$ for the MINI element and the Bernardi--Raugel element (SMALL element), as well as $k=2$ for the Taylor--Hood $\mathcal{P}_2/\mathcal{P}_1$ element. Moreover, the pairs $(Y_{0h},Q_h)$ constructed by these elements fulfill the discrete inf-sup condition, at least when quasi-uniform meshes are used. Further elements with corresponding $k$ are listed in \cite[Cor. 4.30]{John:16}.\qed
\end{example}
Furthermore, the approximating space $Y_h$ is assumed to satisfy a discrete extension theorem,
i.\,e., we assume that there is a constant $c_1>0$ such that
\begin{align}
    \inf_{\varphi_h\in Y_h\colon \varphi_h|_\Gamma=\u_h}\norm{\nabla \varphi_h}_{L^2(\Omega)^{d\times d}}&\le c_1 \norm{\u_h}_{H^{\frac12}(\Gamma)^d} \quad\forall \u_h\in Y_h^\partial. \label{eq:assum_boundary}
\end{align}

\begin{example}[Assumption \eqref{eq:assum_boundary}]\label{ex:assum_boundary}
Assumption \eqref{eq:assum_boundary} holds, for instance, for the finite elements considered in Example \ref{ex:spaces}, which can be seen by the following arguments: Let $I_h:H^1(\Omega)^d\to Y_h$ be the Scott-Zhang interpolation operator, which is stable in $H^1(\Omega)$ and preserves the Dirichlet boundary conditions, see \cite{ScottZhang:90}, and let $S:H^{\frac12}(\Gamma)^d\to H^1(\Omega)^d$ be the harmonic extension operator,
\begin{align*}
    (\nabla S\u,\nabla v)=0\quad\forall v\in H^1_0(\Omega)^d,\qquad S\u=\u\quad\text{on }\Gamma,
\end{align*}
which is well known to be linear and continuous. Thus, there holds
\begin{align*}
  \norm{\nabla I_hS\u_h}_{L^2(\Omega)^{d\times d}}\le c  \norm{S\u_h}_{H^1(\Omega)^{d}}\le c
  \norm{\u_h}_{H^{\frac12}(\Gamma)^d}\quad\forall\u_h\in Y_h^\partial.
\end{align*}\qed
\end{example}

\begin{remark}[Assumption \eqref{eq:assum_boundary}]\label{rem:assum_boundary}
    Using properties of the harmonic extension operator and the trace theorem one can directly show that
    \[
         \inf_{\varphi\in Y\colon \varphi|_\Gamma=\u}\norm{\nabla \varphi}_{L^2(\Omega)^{d\times d}}\le \norm{\nabla S\u}_{L^2(\Omega)^{d\times d} } \le c\norm{\u}_{H^{\frac12}(\Gamma)^d} \quad\forall \u\in Y|_\Gamma=H^{\frac12}(\Gamma)^d.
    \]
    On the discrete level we cannot infer this from the previous assumptions directly. \qed
\end{remark}

\begin{remark}[meshes]
Note that the assumptions \eqref{eq:inf-sup-discrete}, \eqref{eq:approxYhQh}, and \eqref{eq:assum_boundary} are written in a form which is independent of the finite element mesh. We consider quasi-uniform meshes in the examples in the current paper. To transfer the results to graded or anisotropic meshes, one just has to check the assumptions. 
This will be elaborated in a further paper. \qed
\end{remark}

As a consequence of \eqref{eq:assum_boundary} we can show a corresponding stability result for the discrete harmonic extension operator.
\begin{lemma}[stability of the discrete harmonic extension]\label{lem:discreteharmonicstabil}
    Let $S_h:Y_h^\partial\to Y_h$ be the discrete harmonic extension operator,
    \begin{align}\label{eq:discrharmextension}
    (\nabla S_h\u_h,\nabla v_h)=0\quad\forall v_h\in Y_{0h},\qquad S_h\u_h=\u_h\quad\text{on }\Gamma,
    \end{align}
and let Assumption \eqref{eq:assum_boundary} be satisfied. Then there holds
    \[
        \norm{\nabla S_h \u_h}_{L^2(\Omega)^{d\times d}}\le c_1\norm{\u_h}_{H^{\frac12}(\Gamma)^d}\quad \forall \u_h\in Y_h^\partial.
    \]
\end{lemma}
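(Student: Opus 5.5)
The plan is to show that the discrete harmonic extension minimizes the Dirichlet energy over all discrete functions with the prescribed trace, and then to invoke Assumption \eqref{eq:assum_boundary} directly.

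First, fix $\u_h\in Y_h^\partial$. The problem \eqref{eq:discrharmextension} is uniquely solvable. To see this, write $S_h\u_h=\varphi_h+w_h$ for any $\varphi_h\in Y_h$ with $\varphi_h|_\Gamma=\u_h$, which exists since $\u_h$ is a trace of an element of $Y_h$. The correction $w_h\in Y_{0h}$ then solves $(\nabla w_h,\nabla v_h)=-(\nabla\varphi_h,\nabla v_h)$ for all $v_h\in Y_{0h}$. This is a coercive problem on $Y_{0h}$ by the Poincaré inequality, so Lax--Milgram applies, and the result is independent of the choice of $\varphi_h$.

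Next, take an arbitrary $\varphi_h\in Y_h$ with $\varphi_h|_\Gamma=\u_h$. Then $v_h:=\varphi_h-S_h\u_h\in Y_{0h}$, and the Galerkin orthogonality in \eqref{eq:discrharmextension} gives the Pythagorean identity
\[
\norm{\nabla\varphi_h}_{L^2(\Omega)^{d\times d}}^2=\norm{\nabla S_h\u_h}_{L^2(\Omega)^{d\times d}}^2+\norm{\nabla v_h}_{L^2(\Omega)^{d\times d}}^2 .
\]
Hence $\norm{\nabla S_h\u_h}_{L^2(\Omega)^{d\times d}}\le\norm{\nabla\varphi_h}_{L^2(\Omega)^{d\times d}}$ for every admissible $\varphi_h$.

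Taking the infimum over all such $\varphi_h$ and applying \eqref{eq:assum_boundary} yields the claim with the same constant $c_1$. Since $Y_h$ is finite-dimensional, the infimum is in fact attained, namely by $S_h\u_h$ itself, but the argument does not need this. No step is a real obstacle; the only point needing care is well-posedness of $S_h$, which follows from coercivity on $Y_{0h}$ as described above.
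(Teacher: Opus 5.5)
Your proposal is correct and follows essentially the same route as the paper: both use the orthogonality of $S_h\u_h$ to $Y_{0h}$ to show $\norm{\nabla S_h\u_h}_{L^2(\Omega)^{d\times d}}\le\norm{\nabla\varphi_h}_{L^2(\Omega)^{d\times d}}$ for every discrete $\varphi_h$ with trace $\u_h$, and then apply \eqref{eq:assum_boundary}. The only differences are cosmetic: you use the Pythagorean identity where the paper uses Cauchy--Schwarz on $(\nabla S_h\u_h,\nabla\varphi_h)$, and you add a well-posedness argument for $S_h$ that the paper takes for granted.
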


\begin{proof}
    Due to the definition of the discrete harmonic extension operator, there holds
    \[
        \norm{\nabla S_h \u_h}_{L^2(\Omega)^{d\times d}}^2
        =(\nabla S_h \u_h,\nabla \varphi_h)
        \le \norm{\nabla S_h \u_h}_{L^2(\Omega)^{d\times d}}\norm{\nabla \varphi_h}_{L^2(\Omega)^{d\times d}},
    \]
    where $\varphi_h$ is an arbitrary function from $Y_h$ fulfilling $\varphi_h|_\Gamma=\u_h$. Hence, the stability results of the assertion follows from \eqref{eq:assum_boundary}.
\end{proof}

Assumption \eqref{eq:assum_boundary} and as a consequence Lemma \ref{lem:discreteharmonicstabil} have special relevance for the approximation properties of the approximation space $Y_{0h}$. 
\begin{corollary}[best approximation in $Y_{0h}$]
    Let assumptions \eqref{eq:approxYh} and \eqref{eq:assum_boundary} be satisfied. There holds
  \begin{equation}\label{eq:approxYh0}
\begin{split}
\inf_{v_{0h}\in Y_{0h}}\norm{\nabla (v-v_{0h})}_{L^2(\Omega)^{d\times d}}
&\le c h^{\min(k,r)}\norm{v}_{H^{r+1}(\Omega)^d} \\
&\quad \forall v\in H^1_0(\Omega)^d\cap H^{r+1}(\Omega)^d.
\end{split}
\end{equation}
\end{corollary}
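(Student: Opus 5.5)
Take $v\in H^1_0(\Omega)^d\cap H^{r+1}(\Omega)^d$. By \eqref{eq:approxYh} there is $v_h\in Y_h$ with $\norm{v-v_h}_{H^1(\Omega)^d}\le ch^{\min(k,r)}\norm{v}_{H^{r+1}(\Omega)^d}$. This $v_h$ generally does not vanish on $\Gamma$. Its trace $w_h:=v_h|_\Gamma\in Y_h^\partial$ is small, however, since $v|_\Gamma=0$.

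To remove the trace, I set
\[
  v_{0h}:=v_h-S_h w_h,
\]
where $S_h$ is the discrete harmonic extension from Lemma~\ref{lem:discreteharmonicstabil}. Then $v_{0h}\in Y_h$ and $v_{0h}|_\Gamma=w_h-w_h=0$, so $v_{0h}\in Y_{0h}$.

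By the triangle inequality and Lemma~\ref{lem:discreteharmonicstabil},
\[
\norm{\nabla(v-v_{0h})}_{L^2(\Omega)^{d\times d}}\le \norm{\nabla(v-v_h)}_{L^2(\Omega)^{d\times d}}+c_1\norm{w_h}_{H^{\frac12}(\Gamma)^d}.
\]
Since $v|_\Gamma=0$, we have $w_h=(v_h-v)|_\Gamma$. The trace theorem then gives $\norm{w_h}_{H^{\frac12}(\Gamma)^d}\le c\norm{v-v_h}_{H^1(\Omega)^d}$. Both terms are therefore bounded by $ch^{\min(k,r)}\norm{v}_{H^{r+1}(\Omega)^d}$, and taking the infimum over $Y_{0h}$ gives \eqref{eq:approxYh0}.

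The main point to check is that the full $H^1$ norm, not only the gradient seminorm, enters the trace estimate. This is harmless because \eqref{eq:approxYh} controls the full $H^1(\Omega)^d$ norm. Apart from that, the argument only needs $S_h$ to be well defined. This holds because $(\nabla\cdot,\nabla\cdot)$ is coercive on $Y_{0h}\subset H^1_0(\Omega)^d$ by the Poincaré inequality, so \eqref{eq:discrharmextension} has a unique solution for every $w_h\in Y_h^\partial$.
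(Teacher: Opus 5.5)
Your proposal is correct and is essentially the paper's proof. The paper also writes $v_h=\tilde v_h+\tilde v_{0h}$ with $\tilde v_h$ the discrete harmonic extension of $v_h|_\Gamma$, so its $\tilde v_{0h}=v_h-S_h(v_h|_\Gamma)$ is exactly your $v_{0h}$; it then bounds $\norm{\nabla \tilde v_h}_{L^2(\Omega)^{d\times d}}$ via Lemma~\ref{lem:discreteharmonicstabil}, $v|_\Gamma=0$, and the trace theorem, just as you do.
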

\begin{proof}
     Let $v\in H^1_0(\Omega)^d\cap H^{r+1}(\Omega)^d$ with $r$ from \eqref{eq:approxYh} and $v_h\in Y_h$ be arbitrary functions. Moreover, let $\tilde v_h$ be the discrete harmonic extension of $v_h|_\Gamma$ and let $\tilde v_{0h}\in Y_{0h}$ be chosen such that $v_h=\tilde v_h + \tilde v_{0h}$. There holds
    \[
        \norm{\nabla (v-\tilde v_{0h})}_{L^2(\Omega)^{d\times d}}\le \norm{\nabla (v-v_h)}_{L^2(\Omega)^{d\times d}}+\norm{\nabla \tilde v_h}_{L^2(\Omega)^{d\times d}}.
    \]
    According to Lemma \ref{lem:discreteharmonicstabil} the second term can be bounded by
    \[
        \norm{\nabla \tilde v_h}_{L^2(\Omega)}\le c\norm{v_h}_{H^{\frac12}(\Gamma)^d}\le c\norm{v-v_h}_{H^{\frac12}(\Gamma)^d}\le c \norm{v-v_h}_{H^1(\Omega)^d},
    \]
    where in the last two steps we used $v\in Y_0$ and the trace theorem. 
    Hence we have shown that for any $v_h\in Y_h$ there is a $\tilde v_{0h}\in Y_{0h}$ such that
    \[
        \norm{\nabla (v-\tilde v_{0h})}_{L^2(\Omega)^{d\times d}}\le c\norm{v-v_h}_{H^1(\Omega)^d}.
    \]
    As $v_h$ is arbitrarily chosen in $Y_h$, we get by using \eqref{eq:approxYh}
    \[
        \inf_{v_{0h}\in Y_{0h}}\norm{\nabla (v-v_{0h})}_{L^2(\Omega)^{d\times d}} \le \inf_{v_h\in Y_h}\norm{v-v_h}_{H^1(\Omega)^d}\le c h^{\min(k,r)}\norm{v}_{H^{r+1}(\Omega)^d},
    \]
    which is the assertion.
\end{proof}

In order to be able to treat the non-homogeneous Dirichlet boundary condition numerically, we approximate the function $\u$ by a
function $\u_h$ from the finite element space 
$Y_h^\partial=Y_h|_\Gamma$ 
 or $\tilde Y_h^\partial\subset Y_h^\partial$,
 see Subsection
  \ref{sec:reg_error_neu} for possibilities.  Set
\[ 
  Y_{*h}=\{v_h\in Y_h:v_h|_\Gamma=\u_h\}
\]
and define $(\y_h,p_h)\in Y_{*h}\times Q_h$ via
\begin{subequations}\label{eq:u_hp_h}
\begin{align}\label{eq:u_h}
  \nu(\nabla \y_h,\nabla v_h)-(\nabla\cdot v_h,p_h) &=0 \quad\forall v_h\in Y_{0h},\\
  \label{eq:p_h} (\nabla\cdot \y_h,q_h)&=0 \quad\forall q_h\in Q_h. 
\end{align}
\end{subequations}

\begin{remark}[compatibility condition]\label{rem:tilde}
  The solution of \eqref{eq:u_hp_h} is well defined for
  all $\u_h\in Y_h^\partial$; there is no need 
  that the boundary datum satisfies the compatibility condition $\langle\u_h,n\rangle_\Gamma=0$.
  (Use a homogenization argument as in the proof of
    Lemma \ref{lem:2.1} but in discrete spaces, and apply \cite[Prop.~8.2.1]{BoffiBrezziFortin:13}.)
  However, in implementations of the method, the space $Q_h$ is often replaced by $\tilde Q_h=Q_h\oplus\mathcal{P}_0(\Omega)$, see \cite[Rem. 4.70]{John:16} and Remark \ref{rem:implementation}. The normalization condition on the pressure is handled by a Lagrange multiplier, 
  and equation \eqref{eq:p_h} is demanded for $q_h\in\tilde Q_h$, i.\,e.\ additionally for $q_h\equiv1$. 
  In the case that $\u_h\in \tilde Y_h^\partial$, i.\,e., that
  the approximated boundary datum $\u_h$ satisfies $\langle\u_h,n\rangle_\Gamma=0$, 
  this additional equation is fulfilled.
  We get in particular $(\nabla\cdot\y_h,1)=0$ which may be of practical
  importance. If the compatibility condition 
  $\langle\u_h,n\rangle_\Gamma=0$ is not requested one has to replace
  \eqref{eq:p_h} by 
    \begin{align}\label{eq:delta_h}
      (\nabla\cdot \y_h,q_h)=(\delta_h,q_h) \quad\forall q_h\in \tilde Q_h, \quad \delta_h:=\abs{\Omega}^{-1}\langle\u_h,n\rangle_\Gamma,
    \end{align}
    for solvability of this augmented system.
    Note that the right hand side of \eqref{eq:delta_h} vanishes for $q_h\in Q_h$ such that we recover \eqref{eq:p_h}. For $q_h\equiv1$ we get the compatibility condition in the form $(\nabla\cdot\y_h,1)=\langle\u_h,n\rangle_\Gamma$ (Gauss divergence theorem). Finally, we note that for $\u$ satisfying the compatibility condition $\langle\u,n\rangle_\Gamma=0$ there holds
    \[
        \abs{\delta_h}=\abs{\Omega}^{-1}\abs{\langle\u_h-\u,n\rangle_\Gamma}\le c \norm{\u_h-\u}_{H^{-\frac12+\varepsilon}(\Gamma)}
    \]
    for any $\varepsilon>0$ small enough, as the normal vector $n$ is bounded in $H^{\frac12-\varepsilon}(\Gamma)$. Hence, for $\u_h\in Y_h^\partial$ we do not have $\delta_h=0$ in general, but $\delta_h$ is tending to zero for $h$ tending to zero, if $\u_h$ converges to $\u$ in $H^{-\frac12+\varepsilon}(\Gamma)$. Moreover, by a suitable choice of the approximations $\u_h$, one can derive an estimate for $\delta_h$, which is independent of $\varepsilon$, see Remark~\ref{rem:restriction}.
    \qed
\end{remark} 

\begin{remark}[implementational aspects]\label{rem:implementation}
    Replacing $Q_h$ by $\tilde Q_h$ for simpler implementation, as mentioned in Remark \ref{rem:tilde}, means that one has to find $(\y_h,p_h,\delta_h)\in Y_{*h}\times \tilde Q_h\times \R$ such that
    \begin{subequations}\label{eq:u_hp_hi}
    \begin{align}\label{eq:u_hi}
        \nu(\nabla \y_h,\nabla v_h)-(\nabla\cdot v_h,p_h) &=0 \quad\forall v_h\in Y_{0h}\\
        \label{eq:p_hi} (\nabla\cdot \y_h,q_h)-(\delta_h,q_h)&=0 \quad\forall q_h\in \tilde Q_h,\\
        (\mu_h,p_h) &=0 \quad\forall\mu_h\in\R.
    \end{align}
    \end{subequations}
    Since the boundary conditions $y_h=u_h$ are known (or computed in advance), one typically sets $y_h=y_{0h}+E_hu_h$, $E_h:Y_h^\partial\to Y_h$ being a discrete extension operator, such that the system \eqref{eq:u_hp_hi} can be reformulated as the symmetric formulation to find $(\y_{0h},p_h,\delta_h)\in Y_{0h}\times \tilde Q_h\times \R$ such that
    \begin{subequations}\label{eq:u_hp_hi0}
    \begin{align}\label{eq:u_hi0}
        \nu(\nabla \y_{0h},\nabla v_h)-(\nabla\cdot v_h,p_h) &=-\nu(\nabla E_hu_h,\nabla v_h) \quad\forall v_h\in Y_{0h}\\
        \label{eq:p_hi0} -(\nabla\cdot \y_{0h},q_h)+(\delta_h,q_h)&=(\nabla\cdot E_hu_h,q_h) \quad\forall q_h\in \tilde Q_h,\\
        (\mu_h,p_h) &=0 \quad\forall\mu_h\in\R.
    \end{align}
    \end{subequations}
    This system has the structure
    \begin{align*}
        \begin{pmatrix}
            0&\mathbf{o}^T&\mathbf{s}^T\\\mathbf{o}&\mathbf{A}&\mathbf{B}^T\\ \mathbf{s}&\mathbf{B}&\mathbf{O}
        \end{pmatrix}\begin{pmatrix}
            \mathbf{\delta}\\\mathbf{y}_0\\\mathbf{p}
        \end{pmatrix}=
        \begin{pmatrix}
            0\\\mathbf{f}\\\mathbf{g}
        \end{pmatrix}.
    \end{align*}
    Its matrix is symmetric and has saddle point structure with a singular upper left block
    $\left(\begin{smallmatrix}
        0&\mathbf{o}^T\\\mathbf{o}&\mathbf{A}
    \end{smallmatrix}\right)$.
    Note that $\delta=\delta_h$ can be computed independently via \eqref{eq:delta_h} or algebraically via $\mathbf{\delta}=\mathbf{e}^T(\mathbf{g}-\mathbf{B}\mathbf{y}_0)/\mathbf{e}^T\mathbf{s}$ where $\mathbf{e}$ is the vector of ones. Hence one could modify the first equation such that the system is
    \begin{align*}
        \begin{pmatrix}
            \alpha&\mathbf{o}^T&\mathbf{s}^T\\\mathbf{o}&\mathbf{A}&\mathbf{B}^T\\ \mathbf{s}&\mathbf{B}&\mathbf{O}
        \end{pmatrix}\begin{pmatrix}
            \mathbf{\delta}\\\mathbf{y}_0\\\mathbf{p}
        \end{pmatrix}=
        \begin{pmatrix}
            \alpha\delta\\\mathbf{f}\\\mathbf{g}
        \end{pmatrix}
    \end{align*}
    with a positive definite upper left block. The positive real number $\alpha$ should be chosen appropriately.
    \qed
\end{remark}

We analyze the discretization error of this method in
  Section \ref{sec:NumerikWeak} for the case  $\y \in H^{t+\frac{1}{2}}(\Omega)$,  $ t \ge\frac12$, where a
  weak solution of the problem exists. For  $t\in[0,\frac12)$, we will introduce in Section
\ref{sec:veryweakformulation} a very weak
solution $(\y,p)$ and analyze the numerical solution for this case in
Section \ref{sec:NAveryweak}. This analysis is different from
that of Section \ref{sec:NumerikWeak} since the concept of a weak solution cannot be
used, hence the error can only be estimated in very weak norms. Since the
approximation of the boundary datum in
$Y_h^\partial$ is common for both the weak and the very weak
solution we discuss this in the next subsection.

\subsection{\label{sec:reg_error_neu}Regularization of the boundary datum} 
Bartels, Carstensen, and Dolzmann considered the Poisson problem with nonhomogeneous Dirichlet boundary data and encountered the same task of approximating $\u$ in $Y_h^\partial$, in that case a scalar valued $\u$, \cite{BCD:04}. To this end, they analyzed the
$L^2(\Gamma)$-projection and the Lagrange interpolation, each based on the $\mathcal{P}_1$ element, for constructing $\u_h=\Ih_h\u\in Y_h^\partial$ and subsequently derived optimal finite element error estimates in $L^2(\Omega)$ and $H^1(\Omega)$.
Apel, Nicaise, and Pfefferer treated the same problem in \cite{ApelNicaisePfefferer:16}, however, with focus on
non-smooth Dirichlet boundary data, which do not belong to the trace space of $H^1(\Omega)$. For approximating $\u$ by $\u_h=\Ih_h\u$ in $Y_h^\partial$, they analyzed the $L^2(\Gamma)$-projection and the Carstensen interpolant
\cite{Yserentant:90,carstensen:99}, each based on the $\mathcal{P}_1$ element, and showed optimal finite element error estimates in $L^2(\Omega)$. Estimates in $H^1(\Omega)$ cannot be expected in this case due to the non-smoothness of the Dirichlet boundary data. 

Finite element error estimates in $L^2(\Omega)$ and $H^1(\Omega)$ generally require different properties of the approximations $\u_h=\pi_h \u$. For instance, estimates in $H^1(\Omega)$ can be achieved if the approximation error $\u-\u_h$ can be appropriately bounded in $H^{1/2}(\Gamma)$, whereas the best estimates in $L^2(\Omega)$ can be obtained if estimates for the approximation error are additionally available in $H^{s}(\Gamma)$ with $s\le0$ sufficiently small, for details see below.

The idea of approximating $\u$ by  $\u_h = \Ih_h \u\in Y_h^\partial$,
can also be followed in the Stokes case
component-wise. In order to obtain convergence orders for the approximation error, we require a certain regularity for $\u$. Typically, this regularity is measured in the scale of fractional order Sobolev spaces $H^t(\Gamma)$ on the boundary. However, the definition of these spaces is limited to the range $t\in[0,1]$ due to the fact that the boundary is only of class $\mathcal{C}^{0,1}$. For that reason, we introduce the space $\tilde H^t(\Gamma)$ by
\[
    \tilde H^t(\Gamma)=
    \begin{cases}
        H^t(\Gamma) & \text{if }t\in[0,1],\\
        C(\Gamma)\cap \Pi_{j=1}^d H^t(\Gamma_j) & \text{if }t>1,
    \end{cases}
\]
where $\Gamma_j$, $j=1,\ldots,N$, are the plane facets of $\Gamma=\bigcup_{j=1}^N \bar \Gamma_j$. A norm in this space is given by
\[
    \norm{v}_{\tilde H^t(\Gamma)}:=\sum_{j=1}^N \norm{v}_{H^t(\Gamma_j)}.
\]
For large $t$ this space might be larger than the trace space of $H^{t+\frac12}(\Omega)$, i.\,e., the extension $Ev$ of $v\in \tilde H^t(\Gamma)$ might not exist in $H^{t+\frac12}(\Omega)$, but it is sufficient for our needs. 
Forthcoming, we assume that there are intervals $I_s$ and $I_t$ satisfying $[0,\tfrac12]\subset I_s\subset [-1,1]$ and $I_t\subset[0,\infty)$, as well as an operator $\Ih_h: \tilde H^{t}(\Gamma)^d\to Y_h^\partial$ with $t\in I_t$ such that 
\begin{equation}\label{eq:assumptionregularizer}
    \norm{\u-\Ih_h\u}_{H^{s}(\Gamma)^d}\le ch^{t-s}\|\u\|_{\tilde H^t(\Gamma)^d}
\end{equation}
holds for all $s\in I_s$ and $t\in I_t \cap [s,\infty)$.
Examples for the operator $\Ih_h$, which we consider here, are the $L^2(\Gamma)$-projection, the Carstensen interpolation operator \cite{Yserentant:90,carstensen:99} or the Lagrange interpolation operator, each with different approximation properties, i.\,e.\ intervals $I_s$ and $I_t$, depending on the underlying space $Y_h^\partial$, see below.
In order to show estimate \eqref{eq:assumptionregularizer} for $s\le0$ one has to show the inequality
\begin{align}
  \langle \u-\Ih_h\u,\varphi\rangle_\Gamma&\le ch^{t-s}
  \|\u\|_{\tilde H^t(\Gamma)^d}\|\varphi\|_{H^{-s}(\Gamma)^d}.
  \label{eq:regerrortested}
\end{align}
Note that, for $s<0$, this estimate is typically based on orthogonality properties of $\pi_h$, which are satisfied for the $L^2(\Gamma)$-projection and the Carstensen interpolant but in general not for other approximants like the
Lagrange, Cl\'ement or the Scott--Zhang interpolant. However, Tantardini and Veeser constructed a modification of the Scott--Zhang interpolant satisfying \eqref{eq:regerrortested} in domains, see \cite[Section 5]{TantardiniVeeser:16}.

\begin{example}[$I_s$ and $I_t$ for \eqref{eq:assumptionregularizer}]\label{ex:prox}
We exemplarily discuss estimate \eqref{eq:assumptionregularizer} for the $L^2(\Gamma)$-projection, the Carstensen interpolation operator \cite{Yserentant:90,carstensen:99} and the Lagrange interpolation operator as elements of $Y_h^\partial$ being the trace space of $Y_h$ based on the MINI element and the Bernardi--Raugel element (SMALL element), as well as the Taylor--Hood $\mathcal{P}_2/\mathcal{P}_1$ element. For simplicity, we assume that the underlying triangulation is quasi-uniform. 
For the MINI element, the space of piecewise linear and continuous functions over the triangulation of the boundary coincides with $Y_h^\partial$, while for the Bernardi--Raugel element, it is a subspace of $Y_h^\partial$. 
As a consequence, we obtain that \eqref{eq:assumptionregularizer} holds for
\begin{itemize}
  \item $I_s=[-1,1]$, $I_t=[0,2]$ in case of the $L^2(\Gamma)$-projection,
  \item $I_s=[-1,1]$, $I_t=[0,1]$ in case of the Carstensen interpolant, or
  \item $I_s=[0,1]$, $I_t=(\frac12(d-1), 2]$ in case of the Lagrange interpolant.
\end{itemize}
For the Taylor--Hood $\mathcal{P}_2/\mathcal{P}_1$ element, the space of piecewise quadratic and continuous functions over the triangulation of the boundary coincides with $Y_h^\partial$. Hence, \eqref{eq:assumptionregularizer} holds for
\begin{itemize}
  \item $I_s=[-1,1]$, $I_t=[0,3]$ in case of the $L^2(\Gamma)$-projection,
  \item $I_s=[-1,1]$, $I_t=[0,1]$ in case of the Carstensen interpolant, or
  \item $I_s=[0,1]$, $I_t=(\frac12(d-1), 3]$ in case of the Lagrange interpolant.
\end{itemize}
In case that $s\in[-1,0]$ the corresponding proofs for the $L^2(\Gamma)$-projection and the Carstensen interpolant, respectively, follow the same lines as the one in \cite[Lemma 2.14]{ApelNicaisePfefferer:16}, see also \eqref{eq:regerrortested} and the subsequent explanations. In case that $s\in[0,1]$ one first shows estimates in $L^2(\Gamma)$ and $H^1(\Gamma)$ in an appropriate way.
These estimates are well known for quasi-uniform meshes. The estimate in $H^s(\Gamma)$ then follows by real interpolation in Sobolev spaces. The estimate for the Lagrange interpolant is a standard one using real interpolation in Sobolev spaces. \qed
\end{example}

A property of the approximation $\Ih_h \u$ is that in general
\[
  \langle \Ih_h\u,n\rangle_\Gamma\not=0
\]
even if $\langle\u,n\rangle_\Gamma=0$. Examples can be constructed easily, see Example \ref{bsp:counterexampleDGL} in the appendix.
Our remedy is the enforcement of 
\begin{equation}\label{eq:intghn=0}
  \langle\u_h,n\rangle_\Gamma=0,
\end{equation}
by correcting the map
$\Ih_h: \tilde H^t(\Gamma)^d\to Y_h^\partial$ to become $\tilde \Ih_h:\tilde H^t(\Gamma)^d\to\tilde Y_h^\partial$. The modification uses a function $w_h\in Y_h^\partial$, where we only assume for the moment that
\begin{equation*}
    \langle w_h,n\rangle_\Gamma\neq0.
\end{equation*}
It is defined by
\begin{align}\label{eq:lambda_h1}
  \tilde \Ih_h \u=\Ih_h \u-\lambda_h w_h \quad\text{with}\quad \lambda_h=
  \frac{\langle \Ih_h\u,n\rangle_\Gamma}{\langle w_h,n\rangle_\Gamma}.
\end{align}
Note that the choice of $\lambda_h$ ensures
  $\langle \tilde\Ih_h\u,n\rangle_\Gamma=0$ such that $\tilde
  \Ih_h\u\in\tilde Y_h^\partial$.
The function $w_h$ can be chosen, e.\,g., as the $L^2(\Gamma)$-projection or the Carstensen interpolant of the normal vector $n$ on a component-wise basis, see Remark \ref{rem:approximationvariants}. The Lagrange interpolant of $n$ is not suited since the normal vector exhibits jumps in corners in 2D and in edges in 3D, respectively, of the domain. However, other choices are possible as well, where $w_h$ does not approach $n$ at all. For instance, in two space dimensions one can choose $w_h=\y_0|_\Gamma\in Y_h^\partial$ with $\y_0=\frac12\binom{x_1-\bar x_1}{x_2-\bar x_2}$ and
appropriate $(\bar x_1,\bar x_2)^T\in\R^2$.

\begin{remark}[$L^2(\Gamma)$-projection onto $\tilde Y_h^\partial$]
  If $\Ih_h$ is the $L^2(\Gamma)$-projection onto $Y_h^\partial$ and
  $w_h=\Ih_h n$, then $\tilde \Ih_h$ is exactly the $L^2(\Gamma)$-projection
  onto $\tilde Y_h^\partial$. To see this, one has to confirm that $\tilde \Ih_h u\in \tilde Y_h^\partial$ and that $(\u-\tilde \Ih_h\u,v_h)_\Gamma=0$ for all $v_h\in \tilde
  Y_h^\partial$. The former is true due to the choice of $\lambda_h$. The latter follows from
  \begin{align*}
    (\u-\tilde \Ih_h\u,v_h)_\Gamma=(\u-\Ih_h \u,v_h)_\Gamma+\lambda_h(\Ih_h n,v_h)_\Gamma,
  \end{align*}
  where the first term vanishes due to the projection property of $\Ih_h$. The second term vanishes
  since we have for any $v_h\in \tilde
  Y_h^\partial\subset Y_h^\partial$ that  $(\Ih_h n,v_h)_\Gamma=(n,v_h)_\Gamma=0$. \qed
\end{remark}

\begin{remark}[approximation variants] \label{rem:approximationvariants}
  Let us mention approaches in the literature:
Hamouda, Temam, and Zhang, \cite{HamoudaTemamZhang:17}, just use the normal vector $n$ itself for $w_h$, which works since they consider domains with $C^2$ boundary
  only, otherwise $\tilde \Ih_h \u$ would not be a continuous function.

Zhou and Gong, \cite{ZhouGong:22}, use the
  $L^2(\Gamma)$-projection for $\Ih_h$ and set $w_h=\Ih_h n$. They show 
  \[
    \|\u-\tilde \Ih_h \u\|_{L^2(\Gamma)^d} \le C h^t \|\u\|_{\tilde H^t(\Gamma)^d}, \quad
    t\in[0,\tfrac32].
  \]
 
Duran, Gastaldi, and Lombardi, \cite{DuranGastaldiLombardi:20}, use
  in Section 2 the $L^2(\Gamma)$-projection and the
  Carstensen interpolant for $\Ih_h$ and claim that
    $\langle \Ih_h\u,n\rangle_\Gamma=0$ which is not true, see Example
    \ref{bsp:counterexampleDGL} in the Appendix.  In
    Section 3, they define $\tilde \Ih_h$ by a local correction
  (only on two edges of size $O(h)$) of a variant of
    the Lagrange interpolation operator for piecewise smooth
    functions. The
  advantage of the locality competes with the fact that $\tilde \Ih_h\u$
  is uniformly bounded in $H^s(\Gamma)^d$ for $s<\frac12$ only.
  In the analysis of the problem with regularized
    boundary datum this leads to a logarithmic factor which is sharp
    for problems with piecewise smooth boundary datum.

    Heister, Rebholz and Xiao, \cite{HeisterRebholzXiao:16}, assume continuous boundary data such that Lagrange interpolation can be used. In order to satisfy the compatibility condition, they modify the nodal interpolant at one interior node per facet and analyze the interpolation error.  Eickmann, Scott and Tscherpel, \cite{EickmannScottTscherpel:25}, argue that the modification in one single boundary facet is enough, and apply an idea similar to \eqref{eq:lambda_h1} by using $w_h=b_fn_f$ with $b_f$ being the face bubble function of one boundary facet with normal $n_f$.
    
    Jessberger and Kaltenbach, \cite{JessbergerKaltenbach:24}, enforce the compatibility condition by extending the boundary data to the interior, interpolate it and restrict it to the boundary. The interpolation is chosen such that the divergence is preserved in the $\tilde Q_h'$ sense.
\qed 
\end{remark}

\begin{lemma}[error estimates for $\tilde\Ih_h$] \label{lem:tildeI_h}
Let the operator $\Ih_h$ satisfy \eqref{eq:assumptionregularizer} for $s\in I_s\cap(-\frac12,1]$ and $t\in I_t\cap [s,\infty)$. Moreover, let the operator $\tilde\Ih_h$ be defined by \eqref{eq:lambda_h1}, where we additionally assume that there is an $s'\in I_s\cap (-\tfrac12,s]$ such that
\begin{equation}\label{eq:assumptionPh}
   \frac{\norm{w_h}_{H^{s}(\Gamma)^d}}{\left|\langle w_h,n\rangle_\Gamma\right|}\le c h^{s'-s}.
\end{equation}
  Then the estimate 
  \begin{align}\label{eq:assumptionregularizermod}
    \norm{\u-\tilde \Ih_h\u}_{H^{s}(\Gamma)^d}\le
    ch^{t-s}\|\u\|_{\tilde H^t(\Gamma)^d}
  \end{align}
  holds for all $\u\in\tilde H^t(\Gamma)^d$ satisfying $\langle u,n\rangle_{\Gamma}=0$.
\end{lemma}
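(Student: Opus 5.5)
We start from the triangle inequality applied to the definition \eqref{eq:lambda_h1}:
\[
  \norm{\u-\tilde\Ih_h\u}_{H^{s}(\Gamma)^d}\le \norm{\u-\Ih_h\u}_{H^{s}(\Gamma)^d}+\abs{\lambda_h}\,\norm{w_h}_{H^{s}(\Gamma)^d}.
\]
The first term is bounded by $ch^{t-s}\|\u\|_{\tilde H^t(\Gamma)^d}$ directly by assumption \eqref{eq:assumptionregularizer}, since $s\in I_s\cap(-\frac12,1]$ and $t\in I_t\cap[s,\infty)$. It therefore remains to estimate the correction term.

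For the correction term we use the compatibility condition $\langle\u,n\rangle_\Gamma=0$ to rewrite the numerator of $\lambda_h$ as
\[
  \langle\Ih_h\u,n\rangle_\Gamma=\langle\Ih_h\u-\u,n\rangle_\Gamma .
\]
The normal vector $n$ is piecewise constant on the facets. It therefore belongs to $H^{\sigma}(\Gamma)^d$ for every $\sigma<\frac12$, and in particular to $H^{-s'}(\Gamma)^d$, because $-s'<\frac12$ thanks to $s'>-\frac12$. This is exactly why $s'$ is restricted to $(-\frac12,s]$. By the duality pairing we obtain
\[
  \abs{\langle\Ih_h\u-\u,n\rangle_\Gamma}\le \norm{\u-\Ih_h\u}_{H^{s'}(\Gamma)^d}\norm{n}_{H^{-s'}(\Gamma)^d}\le c\,h^{t-s'}\|\u\|_{\tilde H^t(\Gamma)^d}.
\]
In the last step we applied \eqref{eq:assumptionregularizer} again, now with $s'$ in place of $s$. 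This is admissible because $s'\in I_s\cap(-\frac12,1]$ and $t\ge s\ge s'$, so $t\in I_t\cap[s',\infty)$. When $s'\ge0$ the pairing is simply the $L^2(\Gamma)$ inner product combined with $\norm{n}_{L^2}$ or $\norm{n}_{H^{-s'}}$. When $s'<0$ it is the $H^{s'}$--$H^{-s'}$ duality.

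Combining this bound with the assumption \eqref{eq:assumptionPh} gives
\[
  \abs{\lambda_h}\,\norm{w_h}_{H^{s}(\Gamma)^d}=\abs{\langle\Ih_h\u-\u,n\rangle_\Gamma}\,\frac{\norm{w_h}_{H^{s}(\Gamma)^d}}{\abs{\langle w_h,n\rangle_\Gamma}}\le c\,h^{t-s'}h^{s'-s}\|\u\|_{\tilde H^t(\Gamma)^d}=c\,h^{t-s}\|\u\|_{\tilde H^t(\Gamma)^d}.
\]
Adding the two contributions yields \eqref{eq:assumptionregularizermod}.

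The only delicate point is the regularity of $n$. Since $n$ jumps at corners and edges, it lies in $H^{\sigma}(\Gamma)$ only for $\sigma<\frac12$. The duality step therefore works only for $s'>-\frac12$, and the constant $\norm{n}_{H^{-s'}(\Gamma)^d}$ depends on $s'$. Everything else is a direct combination of the stated assumptions.
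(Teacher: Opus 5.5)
Your proof is correct and follows essentially the paper's argument. You apply the triangle inequality to \eqref{eq:lambda_h1}, rewrite $\langle\Ih_h\u,n\rangle_\Gamma=\langle\Ih_h\u-\u,n\rangle_\Gamma$ via the compatibility condition, use the $H^{s'}$--$H^{-s'}$ duality with $n\in H^{-s'}(\Gamma)^d$ (valid since $s'>-\frac12$), and finish with \eqref{eq:assumptionPh} and \eqref{eq:assumptionregularizer} at level $s'$. You also spell out why \eqref{eq:assumptionregularizer} may be applied with $s'$ (namely $s'\in I_s\cap(-\frac12,1]$ and $t\ge s\ge s'$), which the paper leaves implicit.
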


\begin{proof}
Using
\begin{equation}\label{eq:tildehelp1}
    \langle \Ih_h\u,n\rangle_\Gamma=
    \langle \Ih_h\u-\u, n\rangle_\Gamma,
\end{equation}
\eqref{eq:assumptionPh}, and the boundedness of $n$ in $H^{-s'}(\Gamma)^d$, we obtain
\begin{align*}
    \norm{\u-\tilde \Ih_h\u}_{H^{s}(\Gamma)^d}&\le \norm{\u-\Ih_h\u}_{H^{s}(\Gamma)^d}+\frac{|\langle \Ih_h\u-\u, n\rangle_\Gamma|}{|\langle w_h,n\rangle|}\norm{w_h}_{H^{s}(\Gamma)^d}\\
    &\le \norm{\u-\Ih_h\u}_{H^{s}(\Gamma)^d}+ch^{s'-s}|\langle \Ih_h\u-\u, n\rangle_\Gamma| \\
    &\le \norm{\u- \Ih_h\u}_{H^{s}(\Gamma)^d}+ch^{s'-s}\norm{n}_{H^{-s'}(\Gamma)^d}\norm{\u- \Ih_h\u}_{H^{s'}(\Gamma)^d}\\
    &\le \norm{\u- \Ih_h\u}_{H^{s}(\Gamma)^d}+ch^{s'-s}\norm{\u- \Ih_h\u}_{H^{s'}(\Gamma)^d}.
\end{align*}
With \eqref{eq:assumptionregularizer} we deduce \eqref{eq:assumptionregularizermod}.
\end{proof}

\begin{example}[discussion of assumption \eqref{eq:assumptionPh}]\label{ex:proxmod}
Provided that \eqref{eq:assumptionPh} is satisfied, the approximation quality of $\pi_h$ and $\tilde \pi_h$ coincide for $s\in I_s\cap(-\frac12,1]$ and $t\in I_t\cap [s,\infty)$, which is enough for our purposes. We exemplarily discuss different choices of $w_h$. 

If $w_h=\y_0|_\Gamma\in Y_h^\partial$ with $\y_0=\frac12\binom{x_1-\bar x_1}{x_2-\bar x_2}$ and appropriate $(\bar x_1,\bar x_2)\in\R^2$, we get by direct calculations
\[
    \frac{\norm{w_h}_{H^{s}(\Gamma)}}{\left|\langle w_h,n\rangle_\Gamma\right|}\le c.
\]
Consequently, \eqref{eq:assumptionPh} is satisfied, e.\,g., for $s'=s$. 

As a second example, let us consider $w_h$ being the $L^2(\Gamma)$-projection of the normal vector~$n$ using the elements from before on quasi-uniform meshes. By using approximation properties of the $L^2(\Gamma)$-projection and real interpolation in Sobolev spaces, there holds
\[
    \frac{\norm{w_h}_{H^{s}(\Gamma)^d}}{\left|\langle w_h,n\rangle_\Gamma\right|} =\frac{\norm{w_h}_{H^{s}(\Gamma)^d}}{\norm{w_h}_{L^2(\Gamma)^d}^2}\le c\frac{\norm{w_h}_{H^{\max(s,0)}(\Gamma)^d}}{\norm{w_h}_{L^2(\Gamma)^d}^2}\le c\frac{\norm{n}_{H^{\max(s,0)}(\Gamma)^d}}{\norm{w_h}_{L^2(\Gamma)^d}^2}\le c
\]
for $h$ small enough and for all $s<1/2$, as the normal vector $n$ is bounded in $H^s(\Gamma)$ for $s<1/2$. Hence, if $s<1/2$, \eqref{eq:assumptionPh} is satisfied, e.\,g., for $s'=s$. For $s\ge1/2>s'$ we get by means of an inverse inequality and the foregoing estimate
\[
    \frac{\norm{w_h}_{H^{s}(\Gamma)}}{\left|\langle w_h,n\rangle_\Gamma\right|} \le ch^{s'-s}\frac{\norm{w_h}_{H^{s'}(\Gamma)}}{\left|\langle w_h,n\rangle_\Gamma\right|}\le c h^{s'-s}.
\]
As a consequence, if $s\ge1/2$, \eqref{eq:assumptionPh} is satisfied with $s'<1/2$. \qed
\end{example}

\begin{remark}[on the restriction $s'>-\frac12$]\label{rem:restriction}
    The restriction $s'>-\frac12$ in Lemma~\ref{lem:tildeI_h} comes from the fact that the normal vector $n$ does not belong to $H^{\frac12}(\Gamma)^d$ but to $H^{\frac12-\varepsilon}(\Gamma)^d$ for any $\varepsilon>0$. However, in certain situations it is possible to improve the result of Lemma~\ref{lem:tildeI_h} to $s'=-\frac12$.
    As an example, let $\pi_h$ be the $L^2(\Gamma)$-projection as considered in Example~\ref{ex:proxmod}. Moreover, let $P_h$ denote the Carstensen interpolation operator. Then, by using
    \begin{align*}
        \langle \Ih_h\u-\u, n\rangle_\Gamma &= \langle \Ih_h\u-\u, n-P_h n\rangle_\Gamma\\
        &\le \norm{\Ih_h\u-\u}_{L^2(\Gamma)}\norm{n-P_h n}_{L^2(\Gamma)}\\
        &\le c h^t \|\u\|_{\tilde H^t(\Gamma)^d} \norm{n-P_h n}_{L^2(\Gamma)}
    \end{align*}
    we obtain, instead of \eqref{eq:assumptionregularizermod},
    \[
        \norm{\u-\tilde \Ih_h\u}_{H^{s}(\Gamma)^d}\le
        c\left(h^{t-s}\|\u\|_{\tilde H^t(\Gamma)^d} + 
        ch^{t-s+s'}\|\u\|_{\tilde H^t(\Gamma)^d}\norm{n-P_h n}_{L^2(\Gamma)}\right).
    \]
    By direct calculations, we find that the Carstensen interpolant of $n$ approximates $n$ with order $\tfrac12$, i.\,e.,
    \begin{equation}\label{eq:Carstensen_h12}
        \norm{n-P_h n}_{L^2(\Gamma)} \le c h^{\frac12}.
    \end{equation}
    This is due to the fact that the normal vector is piecewise constant. As a result, the normal vector and its Carstensen interpolant coincide except in an $O(h)$-neighborhood of the corners in 2D, or of the edges in 3D, respectively. Hence, we have shown the result of Lemma~\ref{lem:tildeI_h} with $s'=-\frac12$. Of course, then we also require \eqref{eq:assumptionPh} for $s'=-\frac12$, which is however possible according to Example~\ref{ex:proxmod}. Finally, let us remark that a similiar discussion is also possible for $\pi_h$ being the Carstensen interpolant.    \qed
\end{remark}

\section{\label{sec:NumerikWeak}Numerical analysis for the weak solution}

\subsection{\label{sec:3.1}Main results}

Consider the weak solution $(\y,p)\in Y_*\times Q$ according to \eqref{eq:weakneu} and the discrete solution $(\y_h,p_h)\in Y_{*h}\times Q_h$ according to \eqref{eq:u_hp_h}. Moreover, let $(\bar\y,\bar p)\in Y_*\times Q$ be the weak solution according to \eqref{eq:weakneu} with $\nu=1$. As already discussed in Remark~\ref{rem:vissol} there holds
\begin{equation}\label{eq:vissol}
    (y, p)=(\bar y, \nu\bar p).
\end{equation}
In this section we derive approximation error estimates between $(\y,p)$ and $(\y_h,p_h)$ in the energy norm and in weaker norms. Due to \eqref{eq:vissol}, we will obtain that all estimates are pressure robust. In addition,
we formulate the results in a way that neither $\langle\u,n\rangle_\Gamma=0$ nor $\langle\u_h,n\rangle_\Gamma=0$ are required. Nevertheless, we comment in Remark~\ref{rem:approx-tilde} on the respective advantages and disadvantages of whether the compatibility conditions are fulfilled or not.

\begin{theorem}[best approximation in $Y_h$ and $\tilde Q_h$]\label{th:bestapprox}
  Let the assumptions \eqref{eq:inf-sup-discrete} and \eqref{eq:assum_boundary} be satisfied. Then the discretization error satisfies
  \begin{align}\label{eq:bestapproximationyneu} \nonumber
    \nu\norm{\nabla(\y&-\y_h)}_{L^2(\Omega)^{d\times d}} + \norm{p-p_h}_{L^2(\Omega)} \\
    &\le c\nu\left(
    \norm{\bar \y-v_h}_{H^1(\Omega)^{d}} + \norm{\u-\u_h}_{H^{\frac12}(\Gamma)^d} + 
    \norm{\bar p-\tilde q_h}_{L^2(\Omega)} \right) 
  \end{align}
  for all $v_h\in Y_{h}$ and all $\tilde q_h\in \tilde Q_h$. If in addition
  \[
    \nabla \cdot v_h\equiv 0 \quad \forall v_h\in Y_{0h}^\div:=\{v_h^\div\in Y_{0h}:(\nabla\cdot v_h^\div,q_h)=0\ \forall q_h\in Q_h \}
  \]
  then there even holds
  \begin{equation}\label{eq:bestapproximationyneuplus}
    \norm{\nabla(\y-\y_h)}_{L^2(\Omega)^{d\times d}} 
    \le c\left(
    \norm{\bar \y-v_h}_{H^1(\Omega)^{d}} + \norm{\u-\u_h}_{H^{\frac12}(\Gamma)^d}
    \right)
  \end{equation}
  for all $v_h\in Y_{h}$.
\end{theorem}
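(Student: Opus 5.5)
By Remark~\ref{rem:vissol} and \eqref{eq:vissol}, we first reduce to $\nu=1$. The discrete solution scales the same way: $(\y_h,p_h)=(\bar\y_h,\nu\bar p_h)$, where $(\bar\y_h,\bar p_h)$ solves \eqref{eq:u_hp_h} with $\nu=1$. This holds because the boundary datum $\u_h$ does not depend on $\nu$ and the discrete system is uniquely solvable (Remark~\ref{rem:tilde}). Hence it suffices to prove both estimates for $\nu=1$, i.e.\ for $(\bar\y,\bar p)$ and $(\bar\y_h,\bar p_h)$, and then multiply by $\nu$. We therefore drop the bars and set $\nu=1$.

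\textbf{Step 1: reduce to a homogeneous discrete error.} Fix $v_h\in Y_h$. We correct its boundary values with the discrete harmonic extension and set
\[
w_h:=v_h+S_h(\u_h-v_h|_\Gamma)\in Y_{*h}.
\]
Lemma~\ref{lem:discreteharmonicstabil} and the trace theorem give
\[
\norm{\nabla(\y-w_h)}\le \norm{\y-v_h}_{H^1}+c\norm{\u_h-v_h|_\Gamma}_{H^{1/2}(\Gamma)}\le c\bigl(\norm{\y-v_h}_{H^1}+\norm{\u-\u_h}_{H^{1/2}(\Gamma)}\bigr),
\]
using $\u=\y|_\Gamma$. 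The difference $e_h:=\y_h-w_h$ then lies in $Y_{0h}$.

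\textbf{Step 2: Galerkin relations and the velocity bound.} Subtracting \eqref{eq:u_h} from \eqref{eq:weak1} tested with $Y_{0h}\subset Y_0$ gives
\[
(\nabla(\y-\y_h),\nabla\varphi_h)-(\nabla\cdot\varphi_h,p-p_h)=0\quad\forall\varphi_h\in Y_{0h}.
\]
For every $\tilde q_h\in\tilde Q_h$ and $\varphi_h\in Y_{0h}$ we have $(\nabla\cdot\varphi_h,\tilde q_h-p_h)=(\nabla\cdot\varphi_h,q_h)$ for some $q_h\in Q_h$, because $(\nabla\cdot\varphi_h,1)=0$. So constants in $\tilde q_h$ are harmless, and this is why $\tilde Q_h$ may replace $Q_h$ in the bound.

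Next we remove the divergence defect of $e_h$. Since $(\nabla\cdot\y,q_h)=0=(\nabla\cdot\y_h,q_h)$ for $q_h\in Q_h$, we get $(\nabla\cdot e_h,q_h)=(\nabla\cdot(\y-w_h),q_h)$. By the discrete inf-sup condition \eqref{eq:inf-sup-discrete} there is $z_h\in Y_{0h}$ with $(\nabla\cdot z_h,q_h)=(\nabla\cdot(\y-w_h),q_h)$ for all $q_h\in Q_h$ and $\norm{\nabla z_h}\le c\norm{\nabla(\y-w_h)}$. Then $e_h-z_h\in Y_{0h}^\div$.

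Testing the error equation with $\varphi_h=e_h-z_h$ and noting $(\nabla\cdot\varphi_h,p_h)=0$ yields
\[
\norm{\nabla\varphi_h}^2=(\nabla(\y-w_h),\nabla\varphi_h)-(\nabla z_h,\nabla\varphi_h)+(\nabla\cdot\varphi_h,p-\tilde q_h)
\]
for any $\tilde q_h\in\tilde Q_h$. Hence
\[
\norm{\nabla\varphi_h}\le c\bigl(\norm{\nabla(\y-w_h)}+\norm{p-\tilde q_h}\bigr),
\]
and the velocity part of \eqref{eq:bestapproximationyneu} follows from the triangle inequality.

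\textbf{Step 3: the pressure.} We use the discrete inf-sup condition \eqref{eq:inf-sup-discrete} on $q_h-p_h$, where $q_h\in Q_h$ is $\tilde q_h$ minus its mean. Note $\norm{p-q_h}\le\norm{p-\tilde q_h}$, since $p$ has zero mean and the mean-free part is an $L^2$-orthogonal projection. The error equation gives
\[
(\nabla\cdot\varphi_h,q_h-p_h)=(\nabla\cdot\varphi_h,q_h-p)+(\nabla(\y-\y_h),\nabla\varphi_h)\quad\forall\varphi_h\in Y_{0h},
\]
so $\alpha\norm{q_h-p_h}\le c(\norm{p-q_h}+\norm{\nabla(\y-\y_h)})$. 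This proves \eqref{eq:bestapproximationyneu}.

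\textbf{Step 4: the pressure-robust estimate.} For \eqref{eq:bestapproximationyneuplus}, the extra assumption gives $\nabla\cdot\varphi_h\equiv0$ for $\varphi_h\in Y_{0h}^\div$. The term $(\nabla\cdot\varphi_h,p-\tilde q_h)$ in Step 2 then vanishes identically, and the velocity bound holds without any pressure term.

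The main obstacle is Step 2, specifically the bookkeeping that combines the divergence correction $z_h$ via inf-sup with the nonzero boundary datum through $S_h$. One must check that $(\nabla\cdot\y,q_h)=0$ holds for $q_h\in Q_h$, since these are mean-free, even though $\nabla\cdot\y$ need not vanish. One must also check that constants in $\tilde Q_h$ are harmless because $\varphi_h|_\Gamma=0$.
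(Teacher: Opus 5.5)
Your proof is correct and follows the paper's route: the discrete harmonic extension $S_h$ passes from $Y_h$ to $Y_{*h}$, an inf-sup correction moves into the discretely divergence-free space (your $w_h+z_h$ is precisely the paper's $w_h^{\div}$ from Lemma~\ref{lem:fromvhdivtovh}), the error equation is tested with $y_h-w_h^{\div}$, the inf-sup condition handles the pressure, and the scaling $(y,p)=(\bar y,\nu\bar p)$ finishes the argument. The differences are cosmetic: you remove constants in $\tilde Q_h$ directly via $(\nabla\cdot\varphi_h,1)=0$ and the $L^2$-orthogonal mean-value projection instead of Lemma~\ref{lem:intQh}, which only improves constants, and the last term in your identity for $\norm{\nabla\varphi_h}^2$ should carry a minus sign, which does not affect the bound.
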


The proof of this result is postponed to Subsection \ref{sec:proofapprox}, where even the dependence on the inf-sup constant $\alpha$, see \eqref{eq:inf-sup-discrete}, is traced. Note also that we do not need assumption~\eqref{eq:approxYhQh}.

\begin{remark}[pressure robustness in the energy norm]\label{rem:pressurerobust}
    In particular, we get from \eqref{eq:bestapproximationyneu} that the estimate
    \[
        \norm{\nabla(\y-\y_h)}_{L^2(\Omega)^{d\times d}}
    \le c\left(
    \norm{\bar \y-v_h}_{H^1(\Omega)^{d}} + \norm{\u-\u_h}_{H^{\frac12}(\Gamma)^d} + 
    \norm{\bar p-\tilde q_h}_{L^2(\Omega)} \right)
    \]
    is satisfied for all $v_h\in Y_{h}$ and all $\tilde q_h\in \tilde Q_h$, which does not depend on the viscosity parameter $\nu$. Hence, the estimate is pressure robust independent of a particular choice of the finite elements. If we use finite elements, such as the Scott--Vogelius element, which imply $\nabla \cdot v_h\equiv 0$ for all $v_h\in Y_{0h}^\div$, we additionally get that the estimate for the velocity is independent of the pressure, see \eqref{eq:bestapproximationyneuplus}. However, let us again emphasize that the pressure robustness is valid for all finite elements, which satisfy assumptions \eqref{eq:inf-sup-discrete} and \eqref{eq:assum_boundary}. \qed
\end{remark}

\begin{remark}[pressure robustness for the general Stokes problem]
    If we consider the general Stokes problem
    \begin{align*}
    -\nu\Delta \y +\nabla p &= f\quad\text{in }\Omega,\\
    \nabla \cdot \y &=g\quad\text{in }\Omega,\\
    \y&=\u\quad\text{on }\Gamma
  \end{align*}
  with non-homogeneous differential equations, pressure robust estimates cannot be expected for all finite elements and, hence, depend on the choice of the finite elements. However, due to the linearity of the differential operator, we can always split the problem into two. The first problem has exactly the structure as considered here (see \eqref{eq:stokesclassical}), where we obtain pressure robust estimates independent of the choice of the finite elements. The second problem possesses homogeneous Dirichlet boundary conditions and non-homogeneous differential equations. It requires special finite elements, such as the Scott--Vogelius element, or adapted methods as in \cite{LedererLinkeMerdonSchoeberl:17} to ensure pressure robustness. However, this has extensively been considered in the literature. \qed
\end{remark}

For deriving the concrete approximation orders, we will rely on natural regularity assumptions for $\bar \y$, $\bar p$ and $\u$.
A study of which regularity can be expected in dependence of the data, is beyond the scope of the paper.

\begin{theorem}[error estimates in the energy norm]\label{th:approxweak}
  Let the assumptions \eqref{eq:inf-sup-discrete}, \eqref{eq:approxYhQh} and \eqref{eq:assum_boundary} be satisfied. Assume that $\bar\y\in H^{t+\frac12}(\Omega)^d$, $\bar p\in
  H^{t-\frac12}(\Omega)$, and $\u\in \tilde H^t(\Gamma)$ with  some $t\ge\frac12$. If the approximate boundary condition satisfies
  \begin{align}\label{eq:xxx4}
    \norm{\u-\u_h}_{H^{\frac12}(\Gamma)^d} \le
    ch^{\min(t-\frac12,k)} \norm{\u}_{\tilde H^t(\Gamma)^d}
  \end{align}
  with $k$ from Subsection \ref{sec:discr}
  then the error estimate
  \begin{align}\notag
    \nu\norm{\nabla(\y-\y_h)}_{L^2(\Omega)^{d\times d}}&+\norm{p-p_h}_{L^2(\Omega)} \\ &\le \label{eq:est_energynorm}
    c\nu h^{\min(t-\frac12,k)} \left(\norm{\bar\y}_{H^{t+\frac12}(\Omega)^d}+
    \norm{\bar p}_{H^{t-\frac12}(\Omega)}+\norm{\u}_{\tilde H^t(\Gamma)^d}\right)  \end{align}
      holds. If $\u-\u_h\equiv 0$ then the estimate does not explicitly depend on $\norm{\u}_{\tilde H^t(\Gamma)^d}$.
\end{theorem}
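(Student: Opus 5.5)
The plan is to derive Theorem~\ref{th:approxweak} directly from the best approximation result Theorem~\ref{th:bestapprox}. That theorem yields \eqref{eq:bestapproximationyneu} for arbitrary $v_h\in Y_h$ and $\tilde q_h\in\tilde Q_h$. So all that is needed is to bound each of the three terms on its right-hand side by $c h^{\min(t-\frac12,k)}$ times the corresponding norm in \eqref{eq:est_energynorm}, and then take the infimum over $v_h$ and $\tilde q_h$. Theorem~\ref{th:bestapprox} requires only \eqref{eq:inf-sup-discrete} and \eqref{eq:assum_boundary}, both of which are assumed here.

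For the velocity term, apply \eqref{eq:approxYh} with $v=\bar\y\in H^{t+\frac12}(\Omega)^d$, i.e.\ with $r=t-\frac12\ge0$:
\[
  \inf_{v_h\in Y_h}\norm{\bar\y-v_h}_{H^1(\Omega)^d}\le c h^{\min(k,t-\frac12)}\norm{\bar\y}_{H^{t+\frac12}(\Omega)^d}.
\]
Note that $v_h$ is taken from all of $Y_h$, not from $Y_{0h}$ or $Y_{*h}$. This is why \eqref{eq:approxYh} applies directly; the boundary mismatch is already accounted for by the separate term $\norm{\u-\u_h}_{H^{1/2}(\Gamma)^d}$ in \eqref{eq:bestapproximationyneu}.

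For the pressure term, apply \eqref{eq:approxQh} with $q=\bar p\in H^{t-\frac12}(\Omega)$ and the same $r=t-\frac12$:
\[
  \inf_{\tilde q_h\in\tilde Q_h}\norm{\bar p-\tilde q_h}_{L^2(\Omega)}\le c h^{\min(k,t-\frac12)}\norm{\bar p}_{H^{t-\frac12}(\Omega)}.
\]
The boundary term is exactly the hypothesis \eqref{eq:xxx4}.

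Summing the three bounds and multiplying by $c\nu$ gives \eqref{eq:est_energynorm}. If $\u_h=\u$, the boundary term in \eqref{eq:bestapproximationyneu} vanishes. Then $\norm{\u}_{\tilde H^t(\Gamma)^d}$ never enters the estimate, which proves the final assertion.

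There is essentially no obstacle, since the substantive work sits in Theorem~\ref{th:bestapprox}. The only point to check is that the approximation assumptions \eqref{eq:approxYhQh} are used with the full space $Y_h$ and with $\tilde Q_h$, matching the infima in \eqref{eq:bestapproximationyneu}. It also matters that the constant in \eqref{eq:bestapproximationyneu} does not depend on $\nu$. That independence comes from \eqref{eq:vissol} and is what makes the bound pressure robust.
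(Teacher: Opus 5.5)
Your proposal is correct and matches the paper's proof: the paper likewise combines Theorem~\ref{th:bestapprox} with \eqref{eq:approxYh} and \eqref{eq:approxQh}, applied with $r=t-\frac12$, and with the hypothesis \eqref{eq:xxx4}. It also notes, as you do, that the boundary term drops out when $\u-\u_h\equiv 0$.
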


\begin{proof}
The assertion follows from Theorem \ref{th:bestapprox} in combination with assumptions \eqref{eq:approxYh}, \eqref{eq:approxQh} and \eqref{eq:xxx4}. If $\u-\u_h\equiv 0$ then $\norm{\u-\u_h}_{H^{\frac12}(\Gamma)^d}=0$ and there is no need to estimate this term further.
\end{proof}

\begin{example}[approximation orders in the energy norm]\label{ex:approxH1}
 In Example \ref{ex:spaces} and Example \ref{ex:assum_boundary}, we considered the MINI element and the SMALL element which correspond to $k=1$, as well as the Taylor--Hood $\mathcal{P}_2/\mathcal{P}_1$ element which corresponds to $k=2$. There we observed that \eqref{eq:inf-sup-discrete}, \eqref{eq:approxYhQh} and \eqref{eq:assum_boundary} are satisfied for these elements. It remains to show \eqref{eq:xxx4}. For simplicity, we assume that the underlying triangulation is quasi-uniform. If we then construct $u_h$, e.\,g., by means of the $L^2(\Gamma)^d$-projection, the Lagrange interpolation or by means of their appropriately modified versions according to \eqref{eq:lambda_h1}, we get that \eqref{eq:xxx4} is satisfied. More precisely, we can even show
 \[
    \norm{\u-\u_h}_{H^{\frac12}(\Gamma)^d} \le
    ch^{\min(t-\frac12,k+\frac12)} \norm{\u}_{\tilde H^t(\Gamma)^d}\le ch^{\min(t-\frac12,k)} \norm{\u}_{\tilde H^t(\Gamma)^d}
 \]
 for all $t\ge\tfrac12$ in case of the (normal or modified) $L^2(\Gamma)^d$-projection and for all $t>\frac12(d-1)$ in case of the (normal or modified) Lagrange interpolation, respectively, see Example~\ref{ex:prox} and Example~\ref{ex:proxmod}. Of course, for the modfied versions, we require $\langle u,n\rangle_{\Gamma}=0$, see Lemma~\ref{lem:tildeI_h}. Hence, we get the order $k$ if $t=k+\frac12$, that is if $\bar \y\in H^{k+1}(\Omega)^d$, $\bar p\in H^k(\Omega)$, and $\u\in \tilde H^{k+\frac12}(\Gamma)^d$. Note that the Carstensen interpolant is not interesting in this case since the approximation order in $H^{\frac12}(\Gamma)^d$ is only less than or equal to $\frac12$ by construction.\qed
\end{example}
\begin{remark}[approximation orders of the modified version]\label{rem:approx-tilde}
      As already discussed in Remark \ref{rem:tilde} and Example \ref{ex:approxH1}, we again highlight that using the modified version $\tilde \Ih_h$, i.\,e., choosing $\u_h = \tilde \Ih_h \u $ instead of $\u_h = \Ih_h \u $, does not affect the approximation orders from above. This modification requires additional effort in the construction, and it has the advantage that the discrete velocity satisfies $\langle\nabla\cdot\y_h,1\rangle=0$, since the modified approximation satisfies $\langle \u_h,n\rangle_{\Gamma} = 0 $ by construction. On the contrary, if this property is not requested, one can directly proceed with $u_h=\Ih_h u$ and obtains $\langle \u_h,n\rangle_{\Gamma} =\langle\nabla\cdot\y_h,1\rangle= 0 $ in the limit $h\to0$ provided that $\langle \u,n\rangle_{\Gamma} = 0 $, see the discussion of Remark \ref{rem:tilde}. \qed
\end{remark}

In some cases, depending on the approximation $u_h$, we can get rid of the data approximation term in \eqref{eq:bestapproximationyneu}.

\begin{theorem}[best approximation independent of data error]\label{th:bestapproxindependent}
    Let the assumptions \eqref{eq:inf-sup-discrete} and \eqref{eq:assum_boundary} be satisfied. If the approximate boundary condition satisfies
  \begin{align}\label{eq:bestassum}
    \norm{\u-\u_h}_{H^{\frac12}(\Gamma)^d} \le c \norm{\u-v_h}_{H^{\frac12}(\Gamma)^d}
  \end{align}
  for all $v_h\in Y_h^\partial$ then the error estimate
    \begin{align*}
    \nu\norm{\nabla(\y&-\y_h)}_{L^2(\Omega)^{d\times d}} + \norm{p-p_h}_{L^2(\Omega)} 
    \le c\nu\left(
    \norm{\bar \y-v_h}_{H^1(\Omega)^{d}} + 
    \norm{\bar p-\tilde q_h}_{L^2(\Omega)} \right) 
  \end{align*}
  is fulfilled for all $v_h\in Y_{h}$ and all $\tilde q_h\in \tilde Q_h$. If there additionally holds \eqref{eq:approxYhQh}, $\bar\y\in H^{t+\frac12}(\Omega)^d$ and $\bar p\in
  H^{t-\frac12}(\Omega)$ with  some $t\ge\frac12$
  then there is the error estimate
  \begin{align}
    \nu\norm{\nabla(\y-\y_h)}_{L^2(\Omega)^{d\times d}}&+\norm{p-p_h}_{L^2(\Omega)} \nonumber\\ &\le
    c\nu h^{\min(t-\frac12,k)} \left(\norm{\bar \y}_{H^{t+\frac12}(\Omega)^d}+
    \norm{\bar p}_{H^{t-\frac12}(\Omega)}\right).\label{eq:bestapproxestimate}
    \end{align}
\end{theorem}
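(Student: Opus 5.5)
We start from the best approximation estimate \eqref{eq:bestapproximationyneu} of Theorem~\ref{th:bestapprox} and only have to absorb the data term $\norm{\u-\u_h}_{H^{\frac12}(\Gamma)^d}$ into the other two terms.

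Let $v_h\in Y_h$ be arbitrary. Its trace $v_h|_\Gamma$ lies in $Y_h^\partial$, so assumption \eqref{eq:bestassum} applied with this trace gives
\[
  \norm{\u-\u_h}_{H^{\frac12}(\Gamma)^d}\le c\norm{\u-v_h|_\Gamma}_{H^{\frac12}(\Gamma)^d}.
\]
Since $\bar\y\in Y_*$, we have $\bar\y|_\Gamma=\u$. Hence $\u-v_h|_\Gamma=(\bar\y-v_h)|_\Gamma$, and the trace theorem yields
\[
  \norm{\u-v_h|_\Gamma}_{H^{\frac12}(\Gamma)^d}\le c\norm{\bar\y-v_h}_{H^1(\Omega)^d}.
\]
Inserting this into \eqref{eq:bestapproximationyneu}, with the same $v_h$ and an arbitrary $\tilde q_h\in\tilde Q_h$, removes the data term and gives the first assertion. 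The only point that needs care is to use the same $v_h$ in both places. This is legitimate because \eqref{eq:bestapproximationyneu} holds for every $v_h\in Y_h$ and \eqref{eq:bestassum} holds for every $v_h\in Y_h^\partial$.

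For the second assertion we take the infimum over $v_h\in Y_h$ and $\tilde q_h\in\tilde Q_h$ in the first estimate. We then apply \eqref{eq:approxYh} with $r=t-\frac12\ge0$ to $\bar\y\in H^{t+\frac12}(\Omega)^d$, and \eqref{eq:approxQh} with the same $r$ to $\bar p\in H^{t-\frac12}(\Omega)$. Both give the factor $h^{\min(t-\frac12,k)}$, which is \eqref{eq:bestapproxestimate}. No term involving $\norm{\u}_{\tilde H^t(\Gamma)^d}$ appears, because the data error has already been absorbed.

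No real obstacle is expected; the main subtlety is the identification $\bar\y|_\Gamma=\u$, which holds because the weak solution with $\nu=1$ lies in the same affine space $Y_*$.
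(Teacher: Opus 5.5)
Your proposal is correct and follows the paper's proof. The paper also inserts $\norm{\u-\u_h}_{H^{\frac12}(\Gamma)^d}\le c\norm{\u-v_h}_{H^{\frac12}(\Gamma)^d}\le c\norm{\bar\y-v_h}_{H^1(\Omega)^d}$ (obtained from \eqref{eq:bestassum} and the trace theorem, using $\bar\y|_\Gamma=\u$) into Theorem~\ref{th:bestapprox}, and then derives \eqref{eq:bestapproxestimate} from \eqref{eq:approxYhQh} with $r=t-\frac12$.
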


\begin{proof}
    We directly obtain the first part of this result from \eqref{eq:bestassum} and Theorem \ref{th:bestapprox} by applying the trace theorem, i.\,e., there holds
    \[
        \norm{\u-\u_h}_{H^{\frac12}(\Gamma)^d}\le c \norm{\u-v_h}_{H^{\frac12}(\Gamma)^d}\le c \norm{\bar\y-v_h}_{H^{1}(\Omega)^d}
    \]
    for all $v_h\in Y_h$. The second part follows from \eqref{eq:approxYhQh}.
\end{proof}

\begin{example}
    Let us consider the MINI element, the SMALL element as well as the Taylor--Hood $\mathcal{P}_2/\mathcal{P}_1$ element, which satisfy assumptions \eqref{eq:inf-sup-discrete} and \eqref{eq:assum_boundary} as discussed in Example \ref{ex:spaces} and Example \ref{ex:assum_boundary}. If we construct $u_h$, e.\,g., by means of the $L^2(\Gamma)^d$-projection or by means of its appropriately modified versions according to \eqref{eq:lambda_h1}, we next show that \eqref{eq:bestassum} is satisfied. In case of the modified variant, we additionally require
    \[
        \frac{\norm{w_h}_{H^{\frac12}(\Gamma)^d}}{\abs{\langle w_h,n\rangle_\Gamma}}\le ch^{-1},
    \]
    see Example~\ref{ex:proxmod} for different choices of $w_h$, and $\langle u,n\rangle_{\Gamma}=0$ to be satisfied, which is natural in this case. For simplicity, we assume that the underlying triangulation is quasi-uniform. Now, let us first consider the case $u_h=\pi_h u$ with $\pi_h$ being the $L^2(\Gamma)^d$-projection. There holds
    \[
        v_h=\pi_hv_h
    \]
    for all $v_h\in Y_h^\partial$ and
    \[
        \norm{\pi_hv}_{H^{\frac12}(\Gamma)^d}\le c\norm{v}_{H^{\frac12}(\Gamma)^d}
    \]
    for all $v\in H^{\frac12}(\Gamma)$, which is a standard result on quasi-uniform meshes.
    Hence, we obtain
    \begin{align}
        \norm{v_h-\u_h}_{H^{\frac12}(\Gamma)^d}&=\norm{\pi_h(v_h-\u)}_{H^{\frac12}(\Gamma)^d}
        \le c\norm{\u-v_h}_{H^{\frac12}(\Gamma)^d},\label{eq:stabH12}
    \end{align}
    and as a consequence
    \begin{align*}
        \norm{\u-u_h}_{H^{\frac12}(\Gamma)^d} &\le
        \norm{\u-v_h}_{H^{\frac12}(\Gamma)^d} + \norm{v_h-\u_h}_{H^{\frac12}(\Gamma)^d} \le
        c\norm{\u-v_h}_{H^{\frac12}(\Gamma)^d},
    \end{align*}
    which is \eqref{eq:bestassum}.
    In the second case, where $u_h=\tilde \pi_h u$ with $\tilde \pi_h$ being the modified version of the $L^2(\Gamma)^d$-projection according \eqref{eq:lambda_h1}, we observe that
    \[
        \tilde \pi_h u=\Ih_h \u-\lambda_h w_h = \Ih_h \u-\frac{\langle \Ih_h\u,n\rangle_\Gamma}{\langle w_h,n\rangle_\Gamma} w_h=\Ih_h \u-\frac{\langle \Ih_h\u-u,n\rangle_\Gamma}{\langle w_h,n\rangle_\Gamma} w_h
    \]
    due to $\langle u,n\rangle_{\Gamma}=0$. Hence, as $v_h=\pi_hv_h$ for all $v_h\in Y_h^\partial$ and $\langle u-\pi_h u,v_h\rangle_\Gamma=0$ for all $v_h\in Y_h^\partial$, we get
    \begin{align*}
        v_h-\tilde \pi_h u&=\Ih_h(v_h- \u)-\frac{\langle \Ih_h(v_h-\u)-(v_h-u),n\rangle_\Gamma}{\langle w_h,n\rangle_\Gamma} w_h\\
        &=\Ih_h(v_h- \u)-\frac{\langle\pi_h(v_h-u)-(v_h-u),n-P_h n\rangle_\Gamma}{\langle w_h,n\rangle_\Gamma} w_h
    \end{align*}
    with $P_h n$ denoting the Carstensen interpolant of $n$. Subsequently, we deduce by means of \eqref{eq:stabH12}, a standard estimate for the $L^2(\Gamma)^d$-projection and estimate \eqref{eq:Carstensen_h12}
    \begin{align*}
        &\norm{v_h-\tilde \pi_h u}_{H^{\frac12}(\Gamma)^d}\\
        &\le \norm{\pi_h(v_h-\u)}_{H^{\frac12}(\Gamma)^d} + \frac{\abs{\langle\pi_h(v_h-u)-(v_h-u),n-P_h n\rangle_\Gamma}}{\abs{\langle w_h,n\rangle_\Gamma}}\norm{w_h}_{H^{\frac12}(\Gamma)^d}\\
        &\le c\norm{v_h-\u}_{H^{\frac12}(\Gamma)^d} +\norm{\pi_h(v_h-u)-(v_h-u)}_{L^2(\Gamma)^d}\frac{\norm{n-P_h n}_{L^2(\Gamma)^d}}{\abs{\langle w_h,n\rangle_\Gamma}}\norm{w_h}_{H^{\frac12}(\Gamma)^d}\\
        &\le c\norm{v_h-\u}_{H^{\frac12}(\Gamma)^d}\left(1 +h\frac{\norm{w_h}_{H^{\frac12}(\Gamma)^d}}{\abs{\langle w_h,n\rangle_\Gamma}}\right).
    \end{align*}
    The second term is bounded by assumption. Hence, \eqref{eq:bestassum} is also shown in case of the modified variant of the $L^2(\Gamma)^d$-projection. \qed 
\end{example}

We do not only get sharp discretization error estimates in the energy norm but also in weaker norms.

\begin{theorem}[error estimates in weaker norms]\label{lem:L2velocity}
  Let the assumptions \eqref{eq:inf-sup-discrete}, \eqref{eq:approxYhQh} and \eqref{eq:assum_boundary} be satisfied. Assume that $\bar\y\in H^{t+\frac12}(\Omega)^d$, $\bar p\in H^{t-\frac12}(\Omega)$, and $u\in \tilde H^{t'}(\Gamma)$ with some $t'\ge t\ge \frac12$. Let $s\in(\frac12,\xi)\cap(\frac12,1]$ with $\xi\in\R$ from Subsection \ref{sec:regularity} and $s'\in[\frac12,s]\cap[\frac12,1)$. Assume that the approximate boundary condition satisfies \eqref{eq:xxx4} or \eqref{eq:bestassum}, respectively, and
  \begin{align}\label{eq:xxx5}
    \norm{\u-\u_h}_{H^{-s'+\frac12}(\Gamma)^d} \le
    ch^{s+\min(t-\frac12,k)} \norm{\u}_{\tilde H^{t'}(\Gamma)^d}
  \end{align}
  with $k$ from Subsection \ref{sec:discr}.
  Then the error estimate
  \begin{align}\notag
     \nu\norm{\y-\y_h}_{H^{1-s}(\Omega)^d} &+  \norm{p-p_h}_{H^{s}(\Omega)'} \\
     &\le c\nu h^{s+\min(t-\frac12,k)} \left(\norm{\bar\y}_{H^{t+\frac12}(\Omega)^d}+
    \norm{\bar p}_{H^{t-\frac12}(\Omega)}+\norm{\u}_{\tilde H^{t'}(\Gamma)^d}\right) \label{eq:++}
  \end{align}
  holds for $s\in (\frac12,\min(\xi,1))$. If $\xi>1$ and $s=1$, there holds
  \begin{align}\notag
    \nu\norm{\y-\y_h}_{L^2(\Omega)^d} &+ \norm{p-p_h}_{(H^{1}(\Omega)\cap L^2_0(\Omega)\cap V_{-1}^{0,2}(\Omega))'}\\
    &\le c\nu h^{1+\min(t-\frac12,k)} \left(\norm{\bar\y}_{H^{t+\frac12}(\Omega)^d}+
    \norm{\bar p}_{H^{t-\frac12}(\Omega)}+\norm{\u}_{\tilde H^{t'}(\Gamma)^d}\right).\label{eq:++a}
  \end{align}
  If $\u-\u_h\equiv 0$ then both estimates do not explicitly depend on $\norm{\u}_{\tilde H^{t'}(\Gamma)^d}$.
\end{theorem}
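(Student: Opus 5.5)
By Remark~\ref{rem:vissol} and \eqref{eq:vissol}, the discrete solution scales in the same way: $(\y_h,p_h)=(\bar\y_h,\nu\bar p_h)$, where $(\bar\y_h,\bar p_h)$ solves \eqref{eq:u_hp_h} with $\nu=1$. So I will prove the estimate for $\nu=1$ and multiply by $\nu$. The argument is an Aubin--Nitsche duality argument in the product norm $\mathcal{Y}_s$ of \eqref{eq:normYs}.

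\textbf{Setting up the duality.} Write $e=\y-\y_h$ and $\eta=p-p_h$. By duality, $\norm{(e,\eta)}_{\mathcal{Y}_s}$ is, up to constants, the supremum of $\langle f,e\rangle+\langle g,\eta\rangle$ over $(f,g)\in\mathcal{Y}_s'$ of unit norm. For such data, let $(v,q)\in\mathcal{V}_s$ be the solution of the dual problem \eqref{eq:stokes_dual_neu_weak}. Because $g\in L^2_0$, the compatibility is automatic. The regularity result \eqref{eq:apriori} gives $\norm{v}_{H^{1+s}}+\norm{q}_{H^s}\le c$; this is where $s<\xi$ enters, and $s=1$ requires $\xi>1$ together with the $V^{0,2}_{-1}$ part of the norm.

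One detail has to be handled here. The velocity error $e$ is not in $H^1_0$, since $e|_\Gamma=\u-\u_h$. Hence $\langle f,e\rangle$ cannot simply be replaced by $(\nabla v,\nabla e)-(\nabla\cdot e,q)$. I will instead integrate by parts in \eqref{eq:dual_distri} using Green's formula for the Stokes operator. This is justified by a density argument, since $v\in H^{1+s}$ with $s>\frac12$ has a normal derivative trace in $H^{s-\frac12}(\Gamma)$, and $q$ has a trace as well. The result is
\[
\langle f,e\rangle+\langle g,\eta\rangle=(\nabla e,\nabla v)-(\nabla\cdot e,q)-(\nabla\cdot v,\eta)+\langle e,qn-\partial_n v\rangle_\Gamma .
\]

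\textbf{Galerkin orthogonality and the resulting bound.} Since $v\in Y_0$ and $q\in Q$, I subtract \eqref{eq:u_hp_h} tested with $(v_h,q_h)\in Y_{0h}\times Q_h$ from \eqref{eq:weakneu} tested with $(v,q)$. Galerkin orthogonality then lets me replace $(v,q)$ by $(v-v_h,q-q_h)$ in the three volume terms. One check is needed: $(\nabla\cdot e,c)=\langle\u-\u_h,n\rangle_\Gamma$ for constants $c$. Since $q\in L^2_0$, I will choose $q_h\in Q_h$, by subtracting the mean from the best approximation in $\tilde Q_h$, so that no additional constant term appears. This gives
\[
\abs{\langle f,e\rangle+\langle g,\eta\rangle}\le c\bigl(\norm{\nabla e}+\norm{\eta}\bigr)\bigl(\norm{\nabla(v-v_h)}+\norm{q-q_h}\bigr)+\norm{\u-\u_h}_{H^{-s'+\frac12}(\Gamma)}\norm{qn-\partial_n v}_{H^{s'-\frac12}(\Gamma)} .
\]
I then estimate the two terms separately.

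\textbf{Estimating the terms.} By \eqref{eq:approxYh0} and \eqref{eq:approxQh}, the dual approximation error is bounded by $ch^s$. The energy error is bounded by $ch^{\min(t-\frac12,k)}(\dots)$, using Theorem~\ref{th:approxweak} under \eqref{eq:xxx4}, or Theorem~\ref{th:bestapproxindependent} under \eqref{eq:bestassum}. Their product gives the order $h^{s+\min(t-\frac12,k)}$. The boundary term is handled by \eqref{eq:xxx5}.

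\textbf{Main obstacle.} The delicate step is bounding $\norm{qn-\partial_n v}_{H^{s'-\frac12}(\Gamma)}\le c$. Since $s'\le s$, the traces of $v$ and $q$ lie in $H^{s-\frac12}(\Gamma)\subset H^{s'-\frac12}(\Gamma)$ on each facet. Two facts have to be checked to make this global:
\begin{itemize}
\item Gluing across facets is allowed because $s'-\frac12<\frac12$, where piecewise $H^{s'-\frac12}$ regularity coincides with global regularity; this also explains the restriction $s'<1$.
\item Multiplication by the piecewise constant $n$ is bounded in $H^{s'-\frac12}$, again because $s'-\frac12<\frac12$.
\end{itemize}
In the case $s=1$ the same argument goes through using the $s=1$ branch of \eqref{eq:aprioridauge}. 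If $\u=\u_h$, the boundary term vanishes, and so does the dependence on $\norm{\u}_{\tilde H^{t'}}$.
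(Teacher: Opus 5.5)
Your proposal is correct and is essentially the paper's argument: the dual problem \eqref{eq:stokes_dual_neu} with regularity \eqref{eq:aprioridauge}, Green's formula producing the boundary term with $\u-\u_h$, Galerkin orthogonality combined with the energy estimates, and the trace bound for $qn-\partial_n v$ in $H^{s'-\frac12}(\Gamma)^d$ with $s'<1$; using best approximations instead of the discrete dual solution, treating velocity and pressure jointly, and rescaling to $\nu=1$ first are only cosmetic variations. Two small fixes are needed: your identity with $-(\nabla\cdot v,\eta)$ holds only if the dual problem is solved with data $(f,-g)$, as in the paper (with $(f,g)$ the sign flips and Galerkin orthogonality no longer cancels that term), and since the theorem measures the pressure in $H^s(\Omega)'$ rather than in the $\mathcal{Y}_s$ norm $(H^s(\Omega)\cap L^2_0(\Omega))'$, you should add that the two norms coincide for $\eta\in L^2_0(\Omega)$ by \eqref{eq:varphimean}, cf.\ Lemma~\ref{rem:equiHs}.
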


The proof is postponed to Subsection \ref{sec:proofH1-s}.

\begin{remark}[estimates in $L^2\times(H^1)'$] \label{rem:erresty-y_h_kap3}
  Since $H^{1-s}(\Omega)^d\hookrightarrow L^2(\Omega)^d$ and
  $H^{s}(\Omega)'\hookrightarrow H^1(\Omega)'$, Theorem
  \ref{lem:L2velocity} implies in particular 
  \begin{align*}
    \nu\|\y-\y_h\|_{L^2(\Omega)^d}&+\|p-p_h\|_{(H^1(\Omega))'} \\&\le
    c\nu h^{s+\min(t-\frac12,k)} \left(\norm{\bar\y}_{H^{t+\frac12}(\Omega)^d}+
    \norm{\bar p}_{H^{t-\frac12}(\Omega)}+\norm{\u}_{\tilde H^{t'}(\Gamma)^d}\right)
  \end{align*}
  for $s\in (\frac12,\min(\xi,1))$. 
  In case of a non-convex domain, i.\,e. $\xi<1$, the error order  is $\xi+\min(t-\frac12,k)-\varepsilon$ for all $\varepsilon>0$, which is smaller than the error order for the best approximation of the solution, which is $1+\min(t-\frac12,k)$. However, if the error is estimated in a slightly stronger norm, $\nu\norm{\y-\y_h}_{H^{1-s}(\Omega)^d} +  \norm{p-p_h}_{H^{s}(\Omega)'}$, $s\in(\frac12,\xi)$, then the error order coincides with that of the best approximation. This observation seems to be new.\qed
\end{remark}

\begin{remark}[the range of $s$]
    Theorem \ref{lem:L2velocity} contains the approximation order $\alpha(s)=s+\min(t-\frac12,k)$ with $s\in(\frac12,\xi)\cap(\frac12,1)$ for $\nu\norm{\y-\y_h}_{H^{1-s}(\Omega)^d} +  \norm{p-p_h}_{H^{s}(\Omega)'}$. Since this result is also true for $s=0$ by combining Theorems \ref{th:approxweak} and \ref{lem:L2velocity}, the order  $\alpha(s)$ should also be correct for $s\in(0,\frac12]$ by using interpolation in Sobolev spaces. However, we did not elaborate the best assumptions for the error $\u-\u_h$. \qed
\end{remark}

\begin{remark}[role of $t'$] \label{rem:role_of_t'}
    In some cases, the regularity $\u\in \tilde H^t(\Gamma)^d$ does not lead to the approximation order $s+\min(t-\frac12,k)$ in assumption \eqref{eq:xxx5} and consequently in the corresponding finite element error estimates. A little more regularity, $\u\in \tilde H^{t'}(\Gamma)^d$ with $t'>t$, may then help, see Example~\ref{ex:approximation orders in weaker norms}. \qed
\end{remark}

\begin{remark}[pressure robustness in weaker norms]
    In particular, we get from Theorem~\ref{lem:L2velocity} that for $s\in (\frac12,\xi)\cap (\frac12,1]$ the estimate
    \begin{align*}
        \norm{\y-\y_h}_{H^{1-s}(\Omega)^d}
        &\le c h^{s+\min(t-\frac12,k)} \left(\norm{\bar\y}_{H^{t+\frac12}(\Omega)^d}+
        \norm{\bar p}_{H^{t-\frac12}(\Omega)}+\norm{\u}_{\tilde H^{t'}(\Gamma)^d}\right)
    \end{align*}
    is satisfied, which does not depend on the viscosity parameter $\nu$. Hence, the estimate is pressure robust independent of a particular choice of the finite elements. If we use finite elements, such as the Scott--Vogelius element, which imply $\nabla \cdot v_h\equiv 0$ for all $v_h\in Y_{0h}^\div$, we even get that the right hand side of this estimate does not depend on the pressure, i.\,e. on $\norm{\bar p}_{H^{t-\frac12}(\Omega)}$. However, let us emphasize that this is not important for the pressure robustness.\qed
\end{remark}

\begin{example}[verification of the assumptions of Theorem~\ref{lem:L2velocity}]\label{ex:veriassumuh}
We verify assumption \eqref{eq:xxx5} for selected finite element pairings and approximations $u_h$. In detail, we discuss the MINI and the Bernardi--Raugel (SMALL) element,  where $k=1$, and the Taylor--Hood $\mathcal{P}_2/\mathcal{P}_1$ element, where $k=2$, see Example~\ref{ex:spaces} and Example~\ref{ex:assum_boundary}. For simplicity, we assume that the underlying triangulation is quasi-uniform. To construct $u_h$, we use the $L^2(\Gamma)^d$-projection, the Lagrange interpolation (if $t'>\frac12(d-1)$) or their appropriately modified versions, see Example~\ref{ex:prox} and Example~\ref{ex:proxmod}. Of course, for the modfied versions, we require $\langle u,n\rangle_{\Gamma}=0$, see Lemma~\ref{lem:tildeI_h}. These approximations $u_h$ satisfy assumption \eqref{eq:xxx4} as already discussed in Example~\ref{ex:approxH1}. Assumption \eqref{eq:xxx5} is a bit more delicate. Indeed, this condition is satisfied for the $L^2(\Gamma)^d$-projection and the Lagrange interpolation and their modified version. However, we have to choose the values $s'$ and $t'$ for each approximation $u_h$ appropriately. The $L^2(\Gamma)^d$-projection satisfies \eqref{eq:xxx5} as follows: From Example~\ref{ex:prox} we deduce
\[
    \norm{\u-\u_h}_{H^{-s'+\frac12}(\Gamma)^d} \le
    ch^{s+t-\frac12} \norm{\u}_{\tilde H^{t+s-s'}(\Gamma)^d}
\]
for $-s'+\frac12\in[-1,1]$ and $t+s-s'\in[-s'+\frac12,k+1]$. The latter condition can be rewritten as $t-\frac12\in[-s,k+s'-s+\frac12]$. Hence, we get
\[
    \norm{\u-\u_h}_{H^{-s'+\frac12}(\Gamma)^d} \le
    ch^{s+\min(t-\frac12,k+s'-s+\frac12)} \norm{\u}_{\tilde H^{t+s-s'}(\Gamma)^d}
\]
for $s'\in[-\frac12,\frac32]$ and $t-\frac12\ge -s$. If we further restrict the ranges for the parameters to $s\in(\frac12,\xi)\cap(\frac12,1]$, $s'\in[\frac12,s]\cap[\frac12,1)$ and $t\ge\frac12$, we obtain $t':=t+s-s'\ge t\ge\frac12$ and $s'-s+\frac12\ge0$ such that
\[
    \norm{\u-\u_h}_{H^{-s'+\frac12}(\Gamma)^d} \le
    ch^{s+\min(t-\frac12,k)} \norm{\u}_{\tilde H^{t'}(\Gamma)^d}.
\]
In case of the Lagrange interpolation we obtain from Example~\ref{ex:prox}
\[
    \norm{\u-\u_h}_{H^{-s'+\frac12}(\Gamma)^d} \le
    ch^{s+t+s'-1} \norm{\u}_{\tilde H^{s+t-\frac12}(\Gamma)^d}
\]
for $-s'+\frac12\in[0,1]$ and $s+t-\frac12\in[\max(-s'+\frac12,\frac12(d-1)),k+1]$. The latter condition is equivalent to $t-\frac12 \in[\max(-s'+\frac12,\frac12(d-1))-s,k+1-s]$ and therefore
\[
    \norm{\u-\u_h}_{H^{-s'+\frac12}(\Gamma)^d} \le
    ch^{s+s'-\frac12+\min(t-\frac12,k+1-s)} \norm{\u}_{\tilde H^{s+t-\frac12}(\Gamma)^d}
\]
for $s'\in[-\frac12,\frac12]$ and $t-\frac12\ge\max(-s'+\frac12,\frac12(d-1))-s$. If we restrict the ranges for the parameters to $s\in(\frac12,\xi)\cap(\frac12,1]$, $s'=\frac12$ and $t>\frac12(d-1)$, we obtain $t':=s+t-\frac12\ge t\ge\frac12$ and $1-s\ge0$ such that
\[
    \norm{\u-\u_h}_{H^{-s'+\frac12}(\Gamma)^d} \le
    ch^{s+\min(t-\frac12,k)} \norm{\u}_{\tilde H^{t'}(\Gamma)^d}.
\]
Hence, assumption \eqref{eq:xxx5} is shown for the $L^2(\Gamma)^d$-projection and the Lagrange interpolation. It also holds for the appropriately modified versions according to Example~\ref{ex:proxmod}.

The Carstensen interpolant and its modified version are not relevant if we are interested in the full order of convergence in Theorem~\ref{lem:L2velocity}. This can be seen by the following arguments: From Example~\ref{ex:prox} we obtain
\[
    \norm{\u-\u_h}_{H^{-s'+\frac12}(\Gamma)^d} \le
    ch^{s+t-\frac12} \norm{\u}_{\tilde H^{t+s-s'}(\Gamma)^d}
\]
for $-s'+\frac12\in[-1,1]$ and $t+s-s'\in[-s'+\frac12,1]$. The latter condition can be rewritten as $t-\frac12\in[-s,s'-s+\frac12]$. Consequently, we get
\[
    \norm{\u-\u_h}_{H^{-s'+\frac12}(\Gamma)^d} \le
    ch^{s+\min(t-\frac12,s'-s+\frac12)} \norm{\u}_{\tilde H^{t+s-s'}(\Gamma)^d}
\]
for $s'\in[-\frac12,\frac32]$ and $t-\frac12\ge -s$.
If we further restrict the ranges for the parameters to $s\in(\frac12,\xi)\cap(\frac12,1]$, $s'\in[\frac12,s]\cap[\frac12,1)$ and $t\ge\frac12$, we obtain at best
\[
    \norm{\u-\u_h}_{H^{-s+\frac12}(\Gamma)^d} \le
    ch^{s+\min(t-\frac12,\frac12)} \norm{\u}_{\tilde H^{t}(\Gamma)^d}
\]
if $s\in(\frac12,\min(\xi,1))$ and
\[
    \norm{\u-\u_h}_{H^{-\frac12+\varepsilon}(\Gamma)^d} \le
    ch^{1+\min(t-\frac12,\frac12-\varepsilon)} \norm{\u}_{\tilde H^{t+\varepsilon}(\Gamma)^d}
\]
with $\varepsilon>0$ if $\xi>1$ and $s=1$.
Hence, it is not possible to achieve the full order of convergence in Theorem~\ref{lem:L2velocity} with the Carstensen interpolant. Therefore it is not further considered in Example~\ref{ex:approximation orders in weaker norms}. However, the Carstensen interpolant will be relevant in Section~\ref{sec:5.1} again.\qed
\end{example}

\begin{example}[approximation orders in weaker norms]\label{ex:approximation orders in weaker norms}
We elaborate the approximation orders in weaker norms for the finite element pairings from Example~\ref{ex:veriassumuh}. For simplicity, we assume that the underlying triangulation is quasi-uniform. To construct $u_h$, we use the $L^2(\Gamma)^d$-projection, the Lagrange interpolation (if $t'>\frac12(d-1)$) or their appropriately modified versions. These approximations $u_h$ satisfy \eqref{eq:xxx4} and \eqref{eq:xxx5} as discussed in Example~\ref{ex:veriassumuh}.
Hence, we have checked the assumptions of Theorem~\ref{lem:L2velocity}. Let us now discuss the approximations orders, which we can achieve. We distinguish between convex and non-nonconvex domains.

We start with the discussion of convex domains.
Then we have $\xi>1\ge s$. Consequently, there holds
\begin{align*} 
     \nu\norm{\y-\y_h}_{H^{1-s}(\Omega)^d} &+  \norm{p-p_h}_{H^{s}(\Omega)'} \\
     &\le c\nu h^{s + \min(t-\tfrac{1}{2}, k)} \left(\norm{\bar \y}_{H^{t+\frac12}(\Omega)^d}+
     \norm{\bar p}_{H^{t-\frac12}(\Omega)}+\norm{\u}_{\tilde H^{t'}(\Gamma)^d}\right)
\end{align*}
for all $s\in(\frac12,1)$ and
\begin{align*} 
     \nu\norm{\y-\y_h}_{L^2(\Omega)^d} &+ \norm{p-p_h}_{(H^{1}(\Omega)\cap L^2_0(\Omega)\cap V_{-1}^{0,2}(\Omega))'}\\
     &\le c\nu h^{1 + \min(t-\tfrac{1}{2}, k)} \left(\norm{\bar\y}_{H^{t+\frac12}(\Omega)^d}+
     \norm{\bar p}_{H^{t-\frac12}(\Omega)}+\norm{\u}_{\tilde H^{t'}(\Gamma)^d}\right).
\end{align*}
The required regularity for $\bar\y$ and $\bar p$ is $\bar\y\in H^{t+\frac12}(\Omega)^d$ and $\bar p\in H^{t-\frac12}(\Omega)$. The regularity of the Dirichlet boundary datum, which we need, is $\u\in \tilde H^{t'}(\Gamma)^d$ with $t'=t+\varepsilon$ for the (normal and modified) $L^2(\Gamma)^d$-projection, as we may only choose $s' = 1- \varepsilon$ with arbitrarily small $\varepsilon>0$, and $\u\in \tilde H^{t'}(\Gamma)^d$ with $t'=t+\frac12$ for the (normal and modified) Lagrange interpolation. Hence, we need more regular Dirichlet boundary data for the Lagrange interpolation than for the $L^2(\Gamma)^d$-projection, which is also known for the Poisson problem. 

If $\Omega$ is non-convex, then $\xi<1$ and $s<\xi$. Hence, there only holds
\begin{align*} 
     \nu\norm{\y-\y_h}_{H^{1-s}(\Omega)^d} &+  \norm{p-p_h}_{H^{s}(\Omega)'} \\
     &\le c\nu h^{s + \min(t-\tfrac{1}{2}, k)} \left(\norm{\bar\y}_{H^{t+\frac12}(\Omega)^d}+
     \norm{\bar p}_{H^{t-\frac12}(\Omega)}+\norm{\u}_{\tilde H^{t'}(\Gamma)^d}\right)
\end{align*}
for all $s\in(\frac12,\xi)$.
As in the convex case, the required regularity for $\bar\y$ and $\bar p$ is $\bar\y\in H^{t+\frac12}(\Omega)^d$ and $\bar p\in H^{t-\frac12}(\Omega)$. The regularity of the Dirichlet boundary datum is $\u\in \tilde H^{t'}(\Gamma)^d$ with $t'=t$ for the (normal and modified) $L^2(\Gamma)^d$-projection, as we may now choose $s' = s$, and $\u\in \tilde H^{t'}(\Gamma)^d$ with $t'=t+\frac12$ for the (normal and modified) Lagrange interpolation.\qed
\end{example}

\subsection{\label{sec:proofapprox}Proof of Theorem \ref{th:bestapprox}}
The first part of the proof follows the lines of \cite[\S\,4.2.1.1]{John:16} where the
case with homogeneous boundary conditions is analyzed.
We start the error analysis with a lemma similar to the Lemma of C\'ea
for coercive problems. To this end we let
\begin{align*}
  Y_{0h}^{\div}=\{v_h^\div\in Y_{0h}:
  (\nabla\cdot v_h^\div,q_h)=0\ \forall q_h\in Q_h \}, \\
  Y_{*h}^{\div}=\{v_h^\div\in Y_{*h}:
  (\nabla\cdot v_h^\div,q_h)=0\ \forall q_h\in Q_h \}.
\end{align*}

\begin{lemma}[best approximation in $Y_{*h}^\div$]
  The estimate
  \begin{align}\label{eq:error_with_div}
    \nu\norm{\nabla(\y-\y_h)}_{L^2(\Omega)^{d\times d}} \le 2\nu 
    \norm{\nabla(\y-w_h^\div)}_{L^2(\Omega)^{d\times d}} + \norm{p-q_h}_{L^2(\Omega)}
  \end{align}
  holds for all $w_h^\div\in Y_{*h}^\div$ and all $q_h\in Q_h$. If in addition $\nabla \cdot v_h\equiv 0$ for all $v_h\in Y_{0h}^\div$ there even holds 
  \begin{align*}
    \norm{\nabla(\y-\y_h)}_{L^2(\Omega)^{d\times d}} \le 
    \norm{\nabla(\y-w_h^\div)}_{L^2(\Omega)^{d\times d}}.
  \end{align*}
\end{lemma}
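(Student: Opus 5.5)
The plan is the standard Céa-type argument on the discretely divergence-free affine space. Fix $w_h^\div\in Y_{*h}^\div$ and $q_h\in Q_h$. Since $\y_h\in Y_{*h}$ satisfies \eqref{eq:p_h}, we have $\y_h\in Y_{*h}^\div$. Hence $e_h:=\y_h-w_h^\div$ vanishes on $\Gamma$ (both traces equal $\u_h$) and is discretely divergence free, i.e.\ $e_h\in Y_{0h}^\div$.

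The first step is the error equation. Testing \eqref{eq:weak1} with $v=e_h\in Y_{0h}\subset Y_0$ and \eqref{eq:u_h} with $v_h=e_h$, and subtracting, gives
\[
\nu(\nabla(\y-\y_h),\nabla e_h)=(\nabla\cdot e_h,p-p_h).
\]
Because $e_h\in Y_{0h}^\div$ and $p_h-q_h\in Q_h$, we have $(\nabla\cdot e_h,p_h)=(\nabla\cdot e_h,q_h)$. Therefore the right-hand side equals $(\nabla\cdot e_h,p-q_h)$ for every $q_h\in Q_h$.

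Next, write $\y_h-w_h^\div=(\y-w_h^\div)-(\y-\y_h)$, so that
\[
\nu\norm{\nabla e_h}^2=\nu(\nabla(\y-w_h^\div),\nabla e_h)-(\nabla\cdot e_h,p-q_h).
\]
Now use $\norm{\nabla\cdot v}_{L^2(\Omega)}\le\norm{\nabla v}_{L^2(\Omega)^{d\times d}}$ for $v\in H^1_0(\Omega)^d$. This follows from $\norm{\nabla\cdot v}^2+\norm{\nabla\times v}^2=\norm{\nabla v}^2$ for $v\in H_0^1$, obtained by density and integration by parts; the cruder bound $\sqrt d$ would spoil the constants. Dividing by $\norm{\nabla e_h}$ then yields
\[
\nu\norm{\nabla e_h}\le\nu\norm{\nabla(\y-w_h^\div)}+\norm{p-q_h}.
\]
The triangle inequality $\norm{\nabla(\y-\y_h)}\le\norm{\nabla(\y-w_h^\div)}+\norm{\nabla e_h}$ gives \eqref{eq:error_with_div} with constant $2\nu$ in front of the first term and $1$ in front of the pressure term.

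For the second assertion, the extra hypothesis gives $\nabla\cdot e_h\equiv0$ pointwise, so the pressure term drops out entirely. This leaves the Galerkin orthogonality $(\nabla(\y-\y_h),\nabla e_h)=0$ for all $e_h=\y_h-w_h^\div$ as above. Thus $\y_h$ is the $H^1$-seminorm orthogonal projection of $\y$ onto the affine space $Y_{*h}^\div$. By Pythagoras,
\[
\norm{\nabla(\y-w_h^\div)}^2=\norm{\nabla(\y-\y_h)}^2+\norm{\nabla e_h}^2,
\]
which gives the bound with constant $1$.

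The only delicate point is the sharp constant in the divergence bound $\norm{\nabla\cdot e_h}\le\norm{\nabla e_h}$ for functions with zero trace, which the identity above supplies. Everything else is routine, and no inf-sup condition is needed at this stage; the inf-sup condition will only enter later, when $w_h^\div$ is related to an arbitrary $v_h\in Y_h$.
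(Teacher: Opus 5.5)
Your proof is correct. For the first estimate it is the paper's proof: the same error equation tested with $e_h=\y_h-w_h^\div\in Y_{0h}^\div$, the same replacement of $p_h$ by $q_h$, and the same triangle inequality. You add an explicit justification of $\norm{\nabla\cdot e_h}_{L^2(\Omega)}\le\norm{\nabla e_h}_{L^2(\Omega)^{d\times d}}$ for zero-trace functions, which the paper uses tacitly in its Cauchy--Schwarz step.

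For the second estimate your route is genuinely different. The paper drops the pressure term from $\nu\norm{\nabla e_h}_{L^2(\Omega)^{d\times d}}\le\nu\norm{\nabla(\y-w_h^\div)}_{L^2(\Omega)^{d\times d}}+\norm{p-q_h}_{L^2(\Omega)}$ and reapplies the triangle inequality. As written, that only yields $\norm{\nabla(\y-\y_h)}_{L^2(\Omega)^{d\times d}}\le 2\norm{\nabla(\y-w_h^\div)}_{L^2(\Omega)^{d\times d}}$.

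You instead observe that $\nabla\cdot e_h\equiv0$ turns the error equation into exact Galerkin orthogonality on the affine space $Y_{*h}^\div$, and you conclude by Pythagoras. This gives precisely the constant $1$ claimed in the statement. The difference does not matter downstream, since Theorem \ref{th:bestapprox} carries a generic constant, but your argument is the one that actually proves the second estimate as stated.
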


\begin{proof}
  We first assume that $\nabla \cdot v_h\equiv 0$ for all $v_h\in Y_{0h}^\div$ is not fulfilled and comment on the required modification afterwards if the statement is true. We set $v=v_h^\div\in Y_{0h}^\div\subset Y_{0h}$ in \eqref{eq:weak1} and
  $v_h=v_h^\div$ in \eqref{eq:u_h}, take the difference of the two
  equations and obtain
  \begin{align}  
    0&= \nu(\nabla (\y-\y_h),\nabla v_h^\div)-(\nabla\cdot v_h^\div,p-p_h) \nonumber\\ &=
    \nu(\nabla (\y-\y_h),\nabla v_h^\div)-(\nabla\cdot v_h^\div,p-q_h),\label{eq:div0}
  \end{align}
  where we used in the second step that $v_h^\div$ is discretely
  divergence free. Setting $v_h^\div=\y_h-w_h^\div\in Y_{0h}^{\div}$ leads to
  \begin{align*}  
    0&=\nu(\nabla (\y-\y_h),\nabla (\y_h-w_h^\div))-(\nabla\cdot(\y_h-w_h^\div),p-q_h).
  \end{align*}
  With this identity we conclude
  \begin{align*}
    \nu\norm{\nabla&(\y_h-w_h^\div)}_{L^2(\Omega)^{d\times d}}^2 =
    \nu(\nabla(\y_h-w_h^\div),\nabla(\y_h-w_h^\div)) \\ &=
    \nu(\nabla(\y-w_h^\div),\nabla(\y_h-w_h^\div))-\nu
    (\nabla(\y-\y_h),\nabla(\y_h-w_h^\div))  \\ &=
    \nu(\nabla(\y-w_h^\div),\nabla(\y_h-w_h^\div)) - 
    (\nabla\cdot(\y_h-w_h^\div),p-q_h) \\ &\le
    \norm{\nabla(\y_h-w_h^\div)}_{L^2(\Omega)^{d\times d}} \left(\nu
    \norm{\nabla(\y-w_h^\div)}_{L^2(\Omega)^{d\times d}}+
    \norm{p-q_h}_{L^2(\Omega)} \right)
  \end{align*}
  and as a consequence
  \begin{align*}
    \nu\norm{\nabla&(\y_h-w_h^\div)}_{L^2(\Omega)^{d\times d}} \le\nu
    \norm{\nabla(\y-w_h^\div)}_{L^2(\Omega)^{d\times d}} + \norm{p-q_h}_{L^2(\Omega)}.
  \end{align*}
  If $\nabla \cdot v_h=0$ for all $v_h\in Y_{0h}^\div$ then the second term in \eqref{eq:div0} vanishes and hence also in the previous estimate.
  The previous estimate and the triangle inequality in the form
  \begin{align*}
    \nu\norm{\nabla(\y-\y_h)}_{L^2(\Omega)^{d\times d}} \le
    \nu\norm{\nabla(\y-w_h^\div)}_{L^2(\Omega)^{d\times d}} + 
    \nu\norm{\nabla(\y_h-w_h^\div)}_{L^2(\Omega)^{d\times d}}
  \end{align*}
  lead to the asserted estimates.
\end{proof}

The estimate of $\norm{\nabla(\y-w_h^\div)}_{L^2(\Omega)^{d\times d}}$ in the
previous lemma will be simplified with the help of the following lemma
which is also inspired by \cite[\S\,4.2.1.1]{John:16}.

\begin{lemma}[from $Y_{*h}^\div$ to $Y_{*h}$]\label{lem:fromvhdivtovh}
  Let assumption \eqref{eq:inf-sup-discrete} be satisfied. For each $w_h\in Y_{*h}$ there is a $w_h^\div\in Y_{*h}^\div$ satisfying
  \begin{align*}
    \norm{\nabla(\y-w_h^\div)}_{L^2(\Omega)^{d\times d}}\le\left(1+\alpha^{-1}\right)
    \norm{\nabla(\y-w_h)}_{L^2(\Omega)^{d\times d}}
  \end{align*}
  where $\y\in Y$ satisfies \eqref{eq:weak2} and $\alpha$ is the
  constant in the inf-sup condition \eqref{eq:inf-sup-discrete}.
\end{lemma}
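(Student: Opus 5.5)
Given $w_h\in Y_{*h}$, I will correct it by an element $z_h\in Y_{0h}$ that fixes its discrete divergence. I set $w_h^\div:=w_h+z_h$, which keeps the boundary values equal to $\u_h$, so $w_h^\div\in Y_{*h}$. The condition $w_h^\div\in Y_{*h}^\div$ then reads
\[
  (\nabla\cdot z_h,q_h)=-(\nabla\cdot w_h,q_h)=(\nabla\cdot(\y-w_h),q_h)\quad\forall q_h\in Q_h .
\]
The second equality uses that $\y$ satisfies \eqref{eq:weak2} and $Q_h\subset Q$, so $(\nabla\cdot\y,q_h)=0$. The task is therefore to find $z_h$ solving this discrete divergence problem with $\norm{\nabla z_h}_{L^2(\Omega)^{d\times d}}\le\alpha^{-1}\norm{\nabla(\y-w_h)}_{L^2(\Omega)^{d\times d}}$. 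The claim then follows from the triangle inequality
\[
  \norm{\nabla(\y-w_h^\div)}_{L^2(\Omega)^{d\times d}}\le\norm{\nabla(\y-w_h)}_{L^2(\Omega)^{d\times d}}+\norm{\nabla z_h}_{L^2(\Omega)^{d\times d}} .
\]

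To construct $z_h$, I use the standard consequence of the discrete inf-sup condition \eqref{eq:inf-sup-discrete}. Let $B_h:Y_{0h}\to Q_h'$ be given by $\langle B_hv_h,q_h\rangle=(\nabla\cdot v_h,q_h)$. Condition \eqref{eq:inf-sup-discrete} says $\norm{B_h^*q_h}\ge\alpha\norm{q_h}_{L^2(\Omega)}$, so $B_h^*$ is injective and $B_h$ is surjective. Moreover, $B_h$ restricted to the orthogonal complement of its kernel $Y_{0h}^\div$ (orthogonal in the $(\nabla\cdot,\nabla\cdot)$ inner product) is an isomorphism with inverse norm at most $\alpha^{-1}$. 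I take $z_h$ as that preimage of the functional $g_h:=\bigl(q_h\mapsto(\nabla\cdot(\y-w_h),q_h)\bigr)$. Equivalently, $z_h$ is the minimal $H^1_0$-seminorm solution, obtainable from a discrete saddle point problem as in \cite[\S\,4.2.1.1]{John:16}. This gives $\norm{\nabla z_h}\le\alpha^{-1}\norm{g_h}_{Q_h'}$.

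Next I bound the dual norm: $\norm{g_h}_{Q_h'}\le\norm{\nabla\cdot(\y-w_h)}_{L^2(\Omega)}\le\norm{\nabla(\y-w_h)}_{L^2(\Omega)^{d\times d}}$. The last inequality is the key point and the main obstacle, since it must hold with constant one to give exactly $1+\alpha^{-1}$. For functions vanishing on $\Gamma$ the identity $\norm{\nabla\cdot v}^2+\norm{\nabla\times v}^2=\norm{\nabla v}^2$ would be available, but here $\y-w_h$ has boundary trace $\u-\u_h\neq0$. So I avoid that identity and use the pointwise bound $\abs{\operatorname{tr}A}^2\le d\abs{A}^2$ instead. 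That yields only the constant $\sqrt d$, which I would absorb into the constant. To obtain exactly $1$, I would first check whether the inf-sup constant is normalized so that \eqref{eq:inf-sup-discrete} measures $v_h$ in the seminorm $\norm{\nabla v_h}$. I would then refine the bound by splitting $\nabla\cdot$ through the symmetric part of the gradient. If this fails, the statement holds with $(1+\sqrt d\,\alpha^{-1})$, which is harmless for Theorem~\ref{th:bestapprox}. Combining the steps gives the lemma.
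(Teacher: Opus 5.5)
Your construction is the paper's. The paper takes $q_h=\Pi_h(\nabla\cdot w_h)$, which is the Riesz representative of your $-g_h$, so $\norm{q_h}_{L^2(\Omega)}=\norm{g_h}_{Q_h'}$. It then uses the discrete inf-sup condition to obtain $v_h\in Y_{0h}$ with discrete divergence $q_h$ and $\norm{\nabla v_h}_{L^2(\Omega)^{d\times d}}\le\alpha^{-1}\norm{q_h}_{L^2(\Omega)}$. It sets $w_h^\div=w_h-v_h$ (your $z_h=-v_h$) and finishes with the triangle inequality. Your formulation of the divergence equation, tested only against $Q_h$, is in fact the precise version of the paper's ``$\nabla\cdot v_h=q_h$''.

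You hesitate exactly where the paper is sloppy. In \eqref{eq:3} the paper passes from $\norm{\nabla\cdot(y-w_h)}_{L^2(\Omega)}$ to $\norm{\nabla(y-w_h)}_{L^2(\Omega)^{d\times d}}$ with constant one, without comment. Your suspicion is justified, so don't spend time on the symmetric-gradient refinement. For fields with nonzero trace the constant-one bound is false, even after projecting onto mean-zero functions.

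For instance, on $\Omega=(-1,1)^2$ take $e=(3x_1^2+x_2^2,\,2x_1x_2)=\nabla(x_1^3+x_1x_2^2)$. Then $\nabla\cdot e=8x_1$ has mean zero, and $\norm{\nabla\cdot e}_{L^2(\Omega)}^2=\frac{256}{3}=\frac43\norm{\nabla e}_{L^2(\Omega)^{2\times 2}}^2$. Now use Taylor--Hood elements with $y=0$, $u_h=e|_\Gamma$ and $w_h=e\in Y_{*h}$. This gives $q_h=8x_1\in Q_h$, so $\norm{q_h}_{L^2(\Omega)}=\frac{2}{\sqrt3}\norm{\nabla(y-w_h)}_{L^2(\Omega)^{2\times2}}$, and \eqref{eq:3} fails.

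So the argument, yours and the paper's alike, honestly yields the constant $1+\sqrt d\,\alpha^{-1}$, via $\abs{\mathrm{tr}\,A}\le\sqrt d\,\abs{A}$. This only changes the explicit constant in \eqref{eq:bestapproximationy}. It is irrelevant for Theorem~\ref{th:bestapprox}, which is stated with a generic constant $c$.
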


\begin{proof}
  For given $w_h\in Y_{*h}$ set $q_h=\Pi_h(\nabla\cdot w_h):=\mathrm{argmin}_{\phi_h\in Q_h}\norm{\nabla\cdot w_h-\phi_h}_{L^2(\Omega)}$. Hence,
  $\Pi_h:L^2(\Omega)\to Q_h$ is the $L^2(\Omega)$-projection onto $Q_h$. This
  function satisfies
  \begin{align}
    \nonumber \norm{q_h}_{L^2(\Omega)}^2 &=
    (\Pi_h(\nabla\cdot w_h),q_h)=(\nabla\cdot w_h,q_h).
  \end{align}
  Using $(\nabla\cdot \y,q_h)=0$ for all $q_h\in Q_h\subset Q$, which even holds without compatibility condition, we conclude
  \begin{align*}
    \nonumber \norm{q_h}_{L^2(\Omega)}^2 &=
    (\nabla\cdot (w_h-\y),q_h) \le
    \norm{\nabla\cdot (\y-w_h)}_{L^2(\Omega)} \norm{q_h}_{L^2(\Omega)}
  \end{align*}
  and as a consequence
  \begin{align}
    \norm{q_h}_{L^2(\Omega)}&\le\norm{\nabla(\y-w_h)}_{L^2(\Omega)^{d\times d}}.\label{eq:3}
  \end{align}
  By the inf-sup condition \eqref{eq:inf-sup-discrete}, there is a $v_h\in Y_{0h}$ such that 
  \begin{align*}
    \nabla\cdot v_h=q_h \quad\text{and}\quad 
    \norm{\nabla v_h}_{L^2(\Omega)^{d\times d}}\le
    \alpha^{-1}\norm{q_h}_{L^2(\Omega)},
  \end{align*}
  see \cite[Lemma 3.58]{John:16}, hence with \eqref{eq:3}
  \begin{align} \label{eq:2}
    \norm{\nabla v_h}_{L^2(\Omega)^{d\times d}} \le
    \alpha^{-1}\norm{\nabla (\y-w_h)}_{L^2(\Omega)^{d\times d}}.
  \end{align}
  Due to $\nabla\cdot v_h=q_h=\Pi_h(\nabla\cdot w_h)$ we have
  for all $r_h\in Q_h$
  \begin{align*}
    (\nabla\cdot(w_h-v_h),r_h)=
    (\nabla\cdot w_h -\Pi_h(\nabla\cdot w_h),r_h)=0.
  \end{align*}
  Hence, we can set $w_h^\div=w_h-v_h\in Y_{*h}^\div$ (note that $v_h=0$
  on $\Gamma$). The desired result now follows from the triangle inequality,
  \begin{align*}
    \norm{\nabla(\y-w_h^\div)}_{L^2(\Omega)^{d\times d}}&\le
    \norm{\nabla(\y-w_h)}_{L^2(\Omega)^{d\times d}} + 
    \norm{\nabla v_h}_{L^2(\Omega)^{d\times d}},
  \end{align*}
  together with \eqref{eq:2}.
\end{proof}

In the same spirit, we have the following result.

\begin{lemma}[from $Q_h$ to $\tilde Q_h$]\label{lem:intQh}
  For each $\tilde q_h\in \tilde Q_h:=Q_h\oplus\mathcal{P}_0(\Omega)$
  there is a $q_h\in Q_h$ satisfying
  \begin{align}\label{eq:bestapprox}
    \norm{p-q_h}_{L^2(\Omega)}\le 2 \norm{p-\tilde q_h}_{L^2(\Omega)}
  \end{align}
  for arbitrary $p\in L^2_0(\Omega)$.
\end{lemma}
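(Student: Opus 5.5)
Given $\tilde q_h\in\tilde Q_h$, the natural choice is to remove its mean value. Write $\tilde q_h=q_h^0+c$ with $q_h^0\in Q_h$ and $c\in\mathcal{P}_0(\Omega)$. Since $Q_h\subset L^2_0(\Omega)$, this decomposition is unique and $c=\abs{\Omega}^{-1}(\tilde q_h,1)$. We then set $q_h:=\tilde q_h-c=q_h^0\in Q_h$.

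For the estimate, the triangle inequality gives
\[
  \norm{p-q_h}_{L^2(\Omega)}\le \norm{p-\tilde q_h}_{L^2(\Omega)}+\norm{c}_{L^2(\Omega)}.
\]
It remains to bound the constant $c$. We use $(p,1)=0$ and $(q_h^0,1)=0$, which give
\[
  c=\abs{\Omega}^{-1}(\tilde q_h,1)=\abs{\Omega}^{-1}(\tilde q_h-p,1).
\]
Applying Cauchy--Schwarz yields
\[
  \norm{c}_{L^2(\Omega)}=\abs{c}\,\abs{\Omega}^{1/2}\le \abs{\Omega}^{-1/2}\norm{p-\tilde q_h}_{L^2(\Omega)}\abs{\Omega}^{1/2}=\norm{p-\tilde q_h}_{L^2(\Omega)}.
\]
Inserting this into the triangle inequality gives \eqref{eq:bestapprox} with constant $2$.

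An alternative and slightly sharper view is that $q_h$ is the $L^2(\Omega)$-orthogonal projection of $\tilde q_h$ onto $L^2_0(\Omega)$, i.\,e.\ $q_h=P_0\tilde q_h$. Since $P_0p=p$, we get $\norm{p-q_h}=\norm{P_0(p-\tilde q_h)}\le\norm{p-\tilde q_h}$, so the constant can even be taken to be $1$. In either form there is no real obstacle. The only point to check is that $q_h$ actually lies in $Q_h$, and this holds because $Q_h\subset L^2_0(\Omega)$ makes the direct sum decomposition of $\tilde Q_h$ the mean-value splitting.
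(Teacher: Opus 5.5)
Your proposal is correct and matches the paper's proof: subtract the mean $\abs{\Omega}^{-1}(\tilde q_h,1)$, apply the triangle inequality, and bound the constant by writing $(\tilde q_h,1)=(\tilde q_h-p,1)$ and using Cauchy--Schwarz. Your extra remark is also correct: $q_h$ is the $L^2(\Omega)$-orthogonal projection of $\tilde q_h$ onto $L^2_0(\Omega)$, which fixes $p$, so the constant $2$ can in fact be improved to $1$.
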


\begin{proof}
  The function $q_h=\tilde q_h-\abs{\Omega}^{-1}\langle\tilde q_h,1\rangle$
  has vanishing mean value by construction and satisfies 
  \begin{align*}
    \norm{p-q_h}_{L^2(\Omega)}&\le\norm{p-\tilde q_h}_{L^2(\Omega)} + 
    \abs{\Omega}^{-\frac12}\Abs{\langle\tilde q_h,1\rangle}.
  \end{align*}
  With 
  \[\Abs{\langle\tilde q_h,1\rangle}=
  \Abs{\langle\tilde q_h-p,1\rangle}\le
  \abs{\Omega}^{\frac12} \norm{p-\tilde q_h}_{L^2(\Omega)}\] 
  we obtain the desired estimate.
\end{proof}

\begin{lemma}[error estimate for the pressure]
  Let assumption \eqref{eq:inf-sup-discrete} be satisfied. The estimate
  \begin{align*}
    \norm{p-p_h}_{L^2(\Omega)}\le 
    \nu\alpha^{-1} \norm{\nabla(\y-\y_h)}_{L^2(\Omega)^{d\times d}} + 
    \left(\alpha^{-1}+1\right) \norm{p-q_h}_{L^2(\Omega)} 
  \end{align*}
  holds for all $q_h\in Q_h$.
\end{lemma}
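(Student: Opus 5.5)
The plan is the standard argument via the discrete inf-sup condition applied to the discrete pressure error $q_h-p_h\in Q_h$, for an arbitrary fixed $q_h\in Q_h$. The triangle inequality
\[
  \norm{p-p_h}_{L^2(\Omega)}\le \norm{p-q_h}_{L^2(\Omega)}+\norm{q_h-p_h}_{L^2(\Omega)}
\]
reduces the task to bounding $\norm{q_h-p_h}_{L^2(\Omega)}$ by $\alpha^{-1}$ times the velocity error and the pressure approximation error. This accounts for the constant $\alpha^{-1}+1$ in front of $\norm{p-q_h}_{L^2(\Omega)}$.

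By \eqref{eq:inf-sup-discrete},
\[
  \alpha\norm{q_h-p_h}_{L^2(\Omega)}\le\sup_{v_h\in Y_{0h}\setminus\{0\}}\frac{(\nabla\cdot v_h,q_h-p_h)}{\norm{v_h}_{H^1_0(\Omega)^d}},
\]
with $\norm{v_h}_{H^1_0(\Omega)^d}=\norm{\nabla v_h}_{L^2(\Omega)^{d\times d}}$. For the numerator we use Galerkin orthogonality. Since $Y_{0h}\subset Y_0$, we may test \eqref{eq:weak1} with $v=v_h$ and subtract \eqref{eq:u_h}, which gives
\[
  \nu(\nabla(\y-\y_h),\nabla v_h)=(\nabla\cdot v_h,p-p_h)\quad\forall v_h\in Y_{0h}.
\]
It is important that this identity involves only test functions vanishing on $\Gamma$, so the different boundary data $\u$ and $\u_h$ play no role here. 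Hence
\[
  (\nabla\cdot v_h,q_h-p_h)=(\nabla\cdot v_h,q_h-p)+\nu(\nabla(\y-\y_h),\nabla v_h).
\]
For the first term we apply Cauchy--Schwarz together with $\norm{\nabla\cdot v_h}_{L^2(\Omega)}\le\norm{\nabla v_h}_{L^2(\Omega)^{d\times d}}$. This bound holds for $v_h\in H^1_0$, since $\norm{\nabla v}^2=\norm{\nabla\cdot v}^2+\norm{\nabla\times v}^2$ there; it also matches the inequality already used in \eqref{eq:3}. For the second term Cauchy--Schwarz suffices. Dividing by $\norm{\nabla v_h}_{L^2(\Omega)^{d\times d}}$ yields
\[
  \alpha\norm{q_h-p_h}_{L^2(\Omega)}\le\norm{p-q_h}_{L^2(\Omega)}+\nu\norm{\nabla(\y-\y_h)}_{L^2(\Omega)^{d\times d}}.
\]

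Inserting this into the triangle inequality gives the assertion with exactly the stated constants. The only point needing care, rather than a real obstacle, is the bound $\norm{\nabla\cdot v_h}_{L^2(\Omega)}\le\norm{\nabla v_h}_{L^2(\Omega)^{d\times d}}$ with constant one. If one preferred to avoid the $H^1_0$ identity, the cruder bound with constant $\sqrt d$ would only change the generic constant.
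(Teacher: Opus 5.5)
Your proof is correct and follows the paper's argument: Galerkin orthogonality tested only with $v_h\in Y_{0h}$, so the differing boundary data drop out; the discrete inf-sup condition applied to the discrete pressure difference; and the triangle inequality, which yields exactly the constants $\nu\alpha^{-1}$ and $\alpha^{-1}+1$. Your explicit justification of $\norm{\nabla\cdot v_h}_{L^2(\Omega)}\le\norm{\nabla v_h}_{L^2(\Omega)^{d\times d}}$ via the $H^1_0$ identity makes precise a step the paper uses tacitly in its Cauchy--Schwarz estimate. That identity does not cover \eqref{eq:3}, though, because there $\y-w_h$ need not vanish on $\Gamma$, so your cross-reference to it is not quite apt.
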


The proof is identical to that of \cite[Theorem 4.25]{John:16} since
the boundary condition for the velocity does not influence the error
estimate for the pressure. For completeness, we add the proof here.

\begin{proof}
  For arbitrary $v_h\in Y_{0h}$, we set $v=v_h$ in \eqref{eq:weak1} and
  subtract \eqref{eq:u_h} to obtain
  \begin{align*}
    \nu(\nabla(\y-\y_h),\nabla v_h)-(\nabla\cdot v_h,p-p_h)=0,
  \end{align*}
  and with some reformulations we obtain for arbitrary $q_h\in Q_h$
  \begin{align*}
    (\nabla\cdot v_h,p_h-q_h)&=
    -(\nabla\cdot v_h,p-p_h)+(\nabla\cdot v_h,p-q_h)\\ &=
    -\nu(\nabla(\y-\y_h),\nabla v_h)+(\nabla\cdot v_h,p-q_h)\\&\le
    \norm{\nabla v_h}_{L^2(\Omega)^{d\times d}} \left(\nu
    \norm{\nabla(\y-\y_h)}_{L^2(\Omega)^{d\times d}} +
    \norm{p-q_h}_{L^2(\Omega)} \right).
  \end{align*}
  By using the inf-sup condition \eqref{eq:inf-sup-discrete} we conclude 
  with $\|v_h\|_{H^1_0(\Omega)^d}=\norm{\nabla v_h}_{L^2(\Omega)^{d\times d}}$
  \begin{align*}
    \norm{p_h-q_h}_{L^2(\Omega)} &\le\frac1\alpha
    \sup_{v_h\in Y_{0h}\setminus\{0\}}\frac{(\nabla\cdot v_h,p_h-q_h)}{\|v_h\|_{H^1_0(\Omega)^d}} 
    \\ &\le \frac1\alpha \left(\nu
    \norm{\nabla(\y-\y_h)}_{L^2(\Omega)^{d\times d}} +
    \norm{p-q_h}_{L^2(\Omega)} \right).
  \end{align*}
  By using the triangle inequality, we get the desired estimate.
\end{proof}

To summarize until now, it follows from the lemmas of this section that
the discretization error satisfies
  \begin{align}\label{eq:bestapproximationy}
    \nu\norm{\nabla(\y-\y_h)}_{L^2(\Omega)^{d\times d}}&\le 2\nu\left(1+\alpha^{-1}\right)
    \norm{\nabla(\y-w_h)}_{L^2(\Omega)^{d\times d}} + 2\norm{p-\tilde q_h}_{L^2(\Omega)}, \\
    \norm{p-p_h}_{L^2(\Omega)}&\le \label{eq:bestapproximationp}
    \nu\alpha^{-1} \norm{\nabla(\y-\y_h)}_{L^2(\Omega)^{d\times d}} + 
    2\left(\alpha^{-1}+1\right) \norm{p-\tilde q_h}_{L^2(\Omega)} 
  \end{align}
  for all $w_h\in Y_{*h}$ and all $\tilde q_h\in \tilde Q_h$. The second term in \eqref{eq:bestapproximationy} even vanishes if in addition $\nabla \cdot v_h\equiv 0$ for all $v_h\in Y_{0h}^\div$.
  It remains to progress from $w_h\in Y_{*h}$ to general $v_h\in Y_h$.

\begin{lemma}[from $Y_{*h}$ to $Y_h$] \label{lem:fromY*htoYh}
    Let the assumption \eqref{eq:assum_boundary} be satisfied. For each $v_h\in Y_h$ there is a $w_h\in Y_{*h}$ such that
    \begin{align*}
        \norm{\nabla(\y-w_h)}_{L^2(\Omega)^{d\times d}} \le c\left(
        \norm{\y-v_h}_{H^1(\Omega)^d} +
        \norm{\u-\u_h}_{H^{\frac12}(\Gamma)^d} \right)
    \end{align*}
    for all $\y\in H^1(\Omega)^{d}$ with boundary data $\u=\y|_\Gamma$.
\end{lemma}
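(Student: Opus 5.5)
The idea is to correct the boundary values of the arbitrary $v_h\in Y_h$ by a discrete harmonic extension. This is the same mechanism as in the proof of the corollary on best approximation in $Y_{0h}$, now applied to a non-zero boundary trace. Given $v_h\in Y_h$, its trace $v_h|_\Gamma$ lies in $Y_h^\partial$, and $\u_h\in Y_h^\partial$ as well. Hence $\u_h-v_h|_\Gamma\in Y_h^\partial$, and we may apply the discrete harmonic extension operator $S_h$ from Lemma \ref{lem:discreteharmonicstabil}. We define
\[
  w_h:=v_h+S_h(\u_h-v_h|_\Gamma)\in Y_h .
\]
By construction $w_h|_\Gamma=v_h|_\Gamma+\u_h-v_h|_\Gamma=\u_h$, so $w_h\in Y_{*h}$.

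Next, the triangle inequality gives
\[
  \norm{\nabla(\y-w_h)}_{L^2(\Omega)^{d\times d}}\le \norm{\nabla(\y-v_h)}_{L^2(\Omega)^{d\times d}}+\norm{\nabla S_h(\u_h-v_h|_\Gamma)}_{L^2(\Omega)^{d\times d}}.
\]
The first term is bounded by $\norm{\y-v_h}_{H^1(\Omega)^d}$. For the second term, Lemma \ref{lem:discreteharmonicstabil}, which rests on assumption \eqref{eq:assum_boundary}, yields the bound $c_1\norm{\u_h-v_h|_\Gamma}_{H^{\frac12}(\Gamma)^d}$. We then split, using $\u=\y|_\Gamma$,
\[
  \norm{\u_h-v_h}_{H^{\frac12}(\Gamma)^d}\le\norm{\u_h-\u}_{H^{\frac12}(\Gamma)^d}+\norm{(\y-v_h)|_\Gamma}_{H^{\frac12}(\Gamma)^d},
\]
and bound the last term by $c\norm{\y-v_h}_{H^1(\Omega)^d}$ via the trace theorem. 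Collecting the terms gives the assertion with $c=1+c_1(1+c_{\mathrm{tr}})$.

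There is no real obstacle here. The only point needing care is that the correction must be a discrete function with controlled energy. An arbitrary $H^1$ extension of $\u_h-v_h|_\Gamma$ would not lie in $Y_h$. This is exactly why the discrete extension assumption \eqref{eq:assum_boundary}, through Lemma \ref{lem:discreteharmonicstabil}, is required; the continuous bound of Remark \ref{rem:assum_boundary} would not suffice. Note also that $w_h$ depends only on $v_h$ and $\u_h$, not on $\y$.
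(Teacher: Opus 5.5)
Your proposal is correct and follows the paper's proof essentially verbatim: the same choice $w_h=v_h+S_h(\u_h-v_h|_\Gamma)$, the same triangle inequality, Lemma~\ref{lem:discreteharmonicstabil} for the correction term, and the trace theorem to absorb $\norm{\u-v_h|_\Gamma}_{H^{\frac12}(\Gamma)^d}$ into $\norm{\y-v_h}_{H^1(\Omega)^d}$. Your explicit constant and your remark on why the discrete assumption \eqref{eq:assum_boundary} rather than the continuous bound is needed are also accurate.
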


\begin{proof}
Let $S_h:Y_h^\partial\to Y_h$ be the discrete harmonic extension operator defined in \eqref{eq:discrharmextension}. 
For given $v_h\in Y_h$, the function $w_h$ is chosen as 
\begin{align}\label{eq:w_h}
  w_h=v_h+S_h(\u_h-v_h|_\Gamma).
\end{align}
Obviously, due to the definition of the discrete harmonic extension operator $S_h$, there holds
\[
    w_h|_{\Gamma}=v_h|_\Gamma+S_h(\u_h-v_h|_\Gamma)|_\Gamma=v_h|_\Gamma+\u_h-v_h|_\Gamma=\u_h
\]
such that $w_h\in Y_{*h}$.

The idea is used in a similar fashion in \cite{BCD:04}, where the non-homogeneous Dirichlet boundary condition for the Poisson problem is treated. 
    By means of the construction of $w_h$ in \eqref{eq:w_h} we get
    \[
        \norm{\nabla(\y-w_h)}_{L^2(\Omega)^{d\times d}}\le \norm{\nabla(\y-v_h)}_{L^2(\Omega)^{d\times d}}+\norm{\nabla S_h(\u_h-v_h|_\Gamma})_{L^2(\Omega)^{d\times d}}.
    \]
    Using Lemma~\ref{lem:discreteharmonicstabil} and a standard trace theorem, see e.\,g.\ \cite{Ding:96}, we obtain
    \begin{align}
        \norm{\nabla S_h(\u_h-v_h|_\Gamma)}_{L^2(\Omega)^{d\times d}}
        &\le c_1\norm{\u_h-v_h|_\Gamma}_{H^{\frac12}(\Gamma)^d}\\
        &\le c_1\left(\norm{\u_h-\u}_{H^{\frac12}(\Gamma)^d}+\norm{\u-v_h|_\Gamma}_{H^{\frac12}(\Gamma)^d}\right)\notag\\
        &\le c_1\left(\norm{\u_h-\u}_{H^{\frac12}(\Gamma)^d}+c_2\norm{\y-v_h}_{H^{1}(\Omega)^d}\right),\label{eq:S_hstab}
    \end{align}
    where $c_1$ denotes the constant from assumption \eqref{eq:assum_boundary} and $c_2$ the constant from the trace theorem.
\end{proof}

The proof of Theorem \ref{th:bestapprox} ends by combining the estimates \eqref{eq:bestapproximationy} and \eqref{eq:bestapproximationp} with Lemma \ref{lem:fromY*htoYh} and \eqref{eq:vissol}.

\subsection{Proof of Theorem \ref{lem:L2velocity}}\label{sec:proofH1-s}

We start with an error estimate for a dual problem.

\begin{lemma}
Let $(v,q)\in Y_0\times Q$ be the weak solution of the dual Stokes problem \eqref{eq:stokes_dual_neu} with right hand sides $f$ and $g$ as required for \eqref{eq:aprioridauge}. 
Furthermore, let $v_h\in Y_{0h}$ and $q_h\in Q_h$ be finite element solutions to the dual Stokes problem \eqref{eq:stokes_dual_neu}
defined via
\begin{subequations}\label{eq:stokes_dual_neu_weak_discrete}
\begin{align}
  (\nabla v_h,\nabla \varphi_h)-(\nabla\cdot \varphi_h,q_h) &=\langle f,\varphi_h\rangle \quad\forall \varphi_h\in Y_{0h},\\
  (\nabla\cdot v_h,\psi_h)&=(g,\psi_h) \quad\forall \psi_h\in Q_h. 
\end{align}
\end{subequations}
Then there holds the error estimate
\begin{align}\nonumber
\norm{\nabla &(v-v_h)}_{L^2(\Omega)^{d\times d}} + \norm{q-q_h}_{L^2(\Omega)} \\ \label{eq:errordual} &\le ch^s
\begin{cases}
  \norm{f}_{H^{s-1}(\Omega)^d}+\norm{g}_{H^{s}(\Omega)}&\text{if }s\in[0,\xi)\cap [0,1),\ s\ne\frac12,\\
  \norm{f}_{L^2(\Omega)^d}+\norm{g}_{H^{1}(\Omega)}+
  \norm{g}_{V_{-1}^{0,2}(\Omega)}&\text{if }s=1\text{ and }\xi>1
  \end{cases}
\end{align}
with $\xi\in\R$ from Subsection \ref{sec:regularity}.
\end{lemma}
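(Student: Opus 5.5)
The plan is to treat the dual problem as a standard homogeneous Stokes problem with $\nu=1$ and apply the machinery of Subsection~\ref{sec:proofapprox} with $\u=\u_h=0$. In this setting $Y_{*h}=Y_{0h}$ and no boundary approximation term appears. The only new feature is the non-homogeneous right hand side $(f,g)$. The argument has two parts: a Céa-type best approximation estimate, followed by the regularity result \eqref{eq:aprioridauge} combined with the approximation properties \eqref{eq:approxYh0} and \eqref{eq:approxQh}.

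For the quasi-optimality I would follow \cite[Thm.~4.25]{John:16}. Subtracting \eqref{eq:stokes_dual_neu_weak_discrete} from \eqref{eq:stokes_dual_neu_weak} tested with discrete functions gives Galerkin orthogonality:
\[
(\nabla(v-v_h),\nabla\varphi_h)-(\nabla\cdot\varphi_h,q-q_h)=0\quad\forall\varphi_h\in Y_{0h},\qquad
(\nabla\cdot(v-v_h),\psi_h)=0\quad\forall\psi_h\in Q_h.
\]
The second relation plays the role of $(\nabla\cdot \y,q_h)=(\nabla\cdot\y_h,q_h)$ in Lemma~\ref{lem:fromvhdivtovh}. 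I would define $Y_{0h}^{g}:=\{w_h\in Y_{0h}:(\nabla\cdot w_h,\psi_h)=(g,\psi_h)\ \forall\psi_h\in Q_h\}$. Repeating the proof of Lemma~\ref{lem:fromvhdivtovh}, with $\Pi_h(\nabla\cdot w_h - g)$ in place of $\Pi_h(\nabla\cdot w_h)$, shows that for every $w_h\in Y_{0h}$ there is $w_h^g\in Y_{0h}^g$ with $\norm{\nabla(v-w_h^g)}\le(1+\alpha^{-1})\norm{\nabla(v-w_h)}$. The lemma on best approximation in $Y_{*h}^\div$ carries over verbatim, since $v_h-w_h^g$ is discretely divergence free. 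Together with the pressure lemma and Lemma~\ref{lem:intQh}, this yields
\[
\norm{\nabla(v-v_h)}+\norm{q-q_h}\le c\Big(\inf_{w_h\in Y_{0h}}\norm{\nabla(v-w_h)}+\inf_{\tilde q_h\in\tilde Q_h}\norm{q-\tilde q_h}\Big).
\]

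Next I would use the regularity result: by \eqref{eq:aprioridauge}, $(v,q)\in H^{1+s}(\Omega)^d\times H^s(\Omega)$, with norm bounded by the respective data norm in the two cases. Applying \eqref{eq:approxYh0} with $r=s$, which is valid because $v\in H^1_0$ and needs \eqref{eq:assum_boundary}, together with \eqref{eq:approxQh} with $r=s$ and $\min(k,s)=s$ since $k\ge1\ge s$, gives the factor $h^s$ times $\norm{v}_{H^{1+s}}+\norm{q}_{H^s}$. Inserting the a priori bound then proves \eqref{eq:errordual}. The case $s=0$ is just stability, from the trivial choice $w_h=0$, $\tilde q_h=0$ and the $s=0$ estimate.

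The main obstacle is the first step: checking that the discrete divergence constraint with data $g$ does not spoil the Céa argument. In particular, the existence of $w_h^g$ depends on $q_h:=\Pi_h(g-\nabla\cdot w_h)$ being controlled by $\norm{\nabla\cdot(v-w_h)}$. This holds because $(g-\nabla\cdot v,\psi_h)=0$ for $\psi_h\in Q_h$, which is exactly the weak divergence equation, since $Q_h\subset Q$. Once that is in place, the remaining steps are routine.
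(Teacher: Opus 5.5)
Your proposal is correct and takes essentially the paper's route: quasi-optimality of the discrete dual solution (the paper cites \cite[Theorem 4.21, Theorem 4.25 and Lemma 3.60]{John:16} together with Lemma \ref{lem:intQh}), followed by \eqref{eq:approxYh0}, \eqref{eq:approxQh} and the regularity estimate \eqref{eq:aprioridauge}. Your explicit handling of the nonzero divergence datum through the affine space $Y_{0h}^g$ only fills in a detail that the paper leaves to the cited references.
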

\begin{proof}
Since $v\in H^{1+s}(\Omega)^d$ and $q\in H^s(\Omega)$ we have the standard error estimate
\[
    \norm{\nabla (v-v_h)}_{L^2(\Omega)^{d\times d}} + 
    \norm{q-q_h}_{L^2(\Omega)} \le ch^s \left( 
    \norm{v}_{H^{1+s}(\Omega)^d}+\norm{q}_{H^s(\Omega)} 
    \right),
\]
which can be deduced from \cite[Theorem 4.21, Theorem 4.25 and Lemma 3.60]{John:16} and Lemma \ref{lem:intQh} in combination with \eqref{eq:approxYh0} and \eqref{eq:approxQh}, see also the proof of Theorem \ref{th:bestapprox} in Section \ref{sec:proofapprox} and the proof of Theorem \ref{th:approxweak} for a similar reasoning.
With \eqref{eq:aprioridauge} we conclude the desired estimate.
\end{proof}

We now derive an intermediate error estimate for the velocity and the pressure highlighting the structure of the error.
\begin{lemma}\label{weak_error_first_part}
  Let $(\y,p)\in Y_*\times Q$ be the weak solution  according to \eqref{eq:weakneu} and $(\y_h,p_h)\in Y_{*h}\times Q_h$ be the discrete solution according to \eqref{eq:u_hp_h}. Let $f$ and $g$ be functions such that the right hand side of \eqref{eq:aprioridauge} is bounded. Then there holds
  \begin{align}
    \nu\langle\y&-\y_h,f\rangle+(p-p_h,g)\nonumber\\
    &\le
     c\left(h^s \left(\nu\norm{\nabla(y-y_h)}_{L^2(\Omega)^{d\times d}} + \norm{p-p_h}_{L^2(\Omega)}\right)
     +\nu\norm{\u-\u_h}_{\tilde H^{t'}(\Gamma)^d}\right)\nonumber\\
    &\qquad\times\begin{cases}
  \norm{f}_{H^{s-1}(\Omega)^d}+\norm{g}_{H^{s}(\Omega)}&\text{if }s\in[0,\xi)\cap [0,1),\ s\ne\frac12,\\
  \norm{f}_{L^2(\Omega)^d}+\norm{g}_{H^{1}(\Omega)}+
  \norm{g}_{V_{-1}^{0,2}(\Omega)}&\text{if }s=1\text{ and } \xi>1
  \end{cases}\label{eq:femdual}
\end{align}
with $\xi\in\R$ from Subsection \ref{sec:regularity}. If in addition
\[
\nabla \cdot \tilde v_h\equiv 0 \quad \forall \tilde v_h\in Y_{0h}^\div=\{v_h^\div\in Y_{0h}:
  (\nabla\cdot v_h^\div,q_h)=0\ \forall q_h\in Q_h \}
\]
then for $s\in[0,\xi)\cap [0,1]$, $s\ne\frac12$, there even holds
\begin{align*}
    \nu\langle\y&-\y_h,f\rangle\le
     c\nu\left(h^s \norm{\nabla(y-y_h)}_{L^2(\Omega)^{d\times d}}
     +\norm{\u-\u_h}_{\tilde H^{t'}(\Gamma)^d}\right)\norm{f}_{H^{s-1}(\Omega)^d}.
\end{align*}
\end{lemma}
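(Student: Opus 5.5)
The plan is a duality (Aubin--Nitsche type) argument based on the dual problem \eqref{eq:stokes_dual_neu}. Let $(v,q)\in Y_0\times Q$ be its weak solution for the given $f,g$, and let $(v_h,q_h)\in Y_{0h}\times Q_h$ be the discrete solution from \eqref{eq:stokes_dual_neu_weak_discrete}. Write $e=\y-\y_h\in H^1(\Omega)^d$. Note that $e$ does \emph{not} vanish on $\Gamma$, since $e|_\Gamma=\u-\u_h$.

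\textbf{Step 1 (Green formula).} Since $p-p_h\in L^2_0(\Omega)$, I may replace $g$ by $g-\abs{\Omega}^{-1}(g,1)=\nabla\cdot v$ in $(p-p_h,g)$. Using $-\Delta v+\nabla q=f$ and integrating by parts,
\[
\nu\langle e,f\rangle+(p-p_h,g)=\nu(\nabla e,\nabla v)-\nu(\nabla\cdot e,q)-\nu\langle \u-\u_h,\partial_n v-qn\rangle_\Gamma+(\nabla\cdot v,p-p_h).
\]
The boundary pairing is meaningful because \eqref{eq:apriori} gives $v\in H^{1+s}(\Omega)^d$ and $q\in H^s(\Omega)$. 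The formula itself is justified by density, or facetwise.

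\textbf{Step 2 (Galerkin orthogonality).} Subtracting \eqref{eq:u_h} from \eqref{eq:weak1} gives
\[
\nu(\nabla e,\nabla\varphi_h)-(\nabla\cdot\varphi_h,p-p_h)=0\quad\forall\varphi_h\in Y_{0h}.
\]
Subtracting \eqref{eq:p_h} from \eqref{eq:weak2} gives
\[
(\nabla\cdot e,\psi_h)=0\quad\forall\psi_h\in Q_h.
\]
Choosing $\varphi_h=v_h$ and $\psi_h=q_h$, the interior terms become
\[
\nu(\nabla e,\nabla(v-v_h))-(\nabla\cdot(v-v_h),p-p_h)-\nu(\nabla\cdot e,q-q_h).
\]
By Cauchy--Schwarz this is bounded by
\[
\bigl(\nu\norm{\nabla e}_{L^2(\Omega)^{d\times d}}+\norm{p-p_h}_{L^2(\Omega)}\bigr)\bigl(\norm{\nabla(v-v_h)}_{L^2(\Omega)^{d\times d}}+\norm{q-q_h}_{L^2(\Omega)}\bigr).
\]
The dual error estimate \eqref{eq:errordual} then bounds the last factor by $ch^s$ times the data norm appearing in \eqref{eq:femdual}.

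\textbf{Step 3 (boundary term).} Choose $s'\in[\frac12,s]\cap[\frac12,1)$. I will estimate
\[
\abs{\langle\u-\u_h,\partial_nv-qn\rangle_\Gamma}\le\norm{\u-\u_h}_{H^{-s'+\frac12}(\Gamma)^d}\norm{\partial_nv-qn}_{H^{s'-\frac12}(\Gamma)^d}.
\]
On each plane facet $\Gamma_j$, the trace theorem gives $\partial_n v-qn\in H^{s-\frac12}(\Gamma_j)^d$ with norm bounded by $c(\norm{v}_{H^{1+s}(\Omega)^d}+\norm{q}_{H^s(\Omega)})$. Since $0\le s'-\frac12<\frac12$, facetwise $H^{s'-\frac12}$ functions glue to a function in $H^{s'-\frac12}(\Gamma)^d$ with no matching condition. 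Combining this with \eqref{eq:apriori} (equivalently \eqref{eq:aprioridauge}) bounds the traction by the same data norm. Finally, $-s'+\frac12\le0\le t'$, so
\[
\norm{\u-\u_h}_{H^{-s'+\frac12}(\Gamma)^d}\le c\norm{\u-\u_h}_{\tilde H^{t'}(\Gamma)^d},
\]
which gives the term in \eqref{eq:femdual}. I expect this step to be the main technical obstacle. The delicate points are making the Green formula rigorous for $e\notin H^1_0(\Omega)^d$ on a polygonal or polyhedral boundary, and justifying the facetwise gluing of the traction.

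\textbf{Step 4 (divergence-free elements).} Take $g=0$, so that $\nabla\cdot v=0$. Then $v_h\in Y_{0h}^\div$, hence $\nabla\cdot v_h\equiv0$ by the additional assumption, and the term $(\nabla\cdot(v-v_h),p-p_h)$ vanishes. For the term $(\nabla\cdot e,q-q_h)$, I intend to use that for such elements (e.g.\ Scott--Vogelius) the divergence of $Y_h$ lies in $\tilde Q_h$. Together with $(\nabla\cdot e,\psi_h)=0$ for $\psi_h\in Q_h$, this makes $\nabla\cdot e$ constant. Since $q-q_h$ has mean zero, the term vanishes. Consequently only $\nu h^s\norm{\nabla e}_{L^2(\Omega)^{d\times d}}$ and the boundary term remain, which yields the second estimate.
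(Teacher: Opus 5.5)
Your argument follows the paper's proof. It uses the dual problem \eqref{eq:stokes_dual_neu} and its discrete counterpart, Galerkin orthogonality tested with $(v_h,q_h)$, and \eqref{eq:errordual} for the interior terms. The boundary term is split as $H^{-s'+\frac12}(\Gamma)^d\times H^{s'-\frac12}(\Gamma)^d$ with $s'\in[\frac12,s]\cap[\frac12,1)$; your facetwise gluing is a fine justification of the paper's ``standard trace estimate''. However, Steps 1 and 2 do not fit together as written. With $(v,q)$ and $(v_h,q_h)$ solving the dual problems with data $(f,g)$, Step 1 produces $+(\nabla\cdot v,p-p_h)$. Subtracting $\nu(\nabla e,\nabla v_h)-(\nabla\cdot v_h,p-p_h)=0$ then yields $(\nabla\cdot(v+v_h),p-p_h)$, which is not small, rather than $-(\nabla\cdot(v-v_h),p-p_h)$. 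The paper avoids this by solving both dual problems with data $(f,-g)$, so that $(p-p_h,g)=-(\nabla\cdot v,p-p_h)$. Equivalently, your computation controls $\nu\langle e,f\rangle-(p-p_h,g)$, and you then replace $g$ by $-g$, which leaves the right-hand side of \eqref{eq:femdual} unchanged.

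In Step 4 you invoke $\nabla\cdot Y_h\subset\tilde Q_h$, which is not among the hypotheses; the assumption concerns only $Y_{0h}^{\div}$ and says nothing about functions with nonzero trace. It is also not needed. Only $(\nabla\cdot(v-v_h),p-p_h)$ has to vanish, and that follows from $g=0$ as you say. The other term satisfies $\nu\abs{(\nabla\cdot e,q-q_h)}\le c\nu\norm{\nabla e}_{L^2(\Omega)^{d\times d}}\norm{q-q_h}_{L^2(\Omega)}\le c\nu h^s\norm{\nabla e}_{L^2(\Omega)^{d\times d}}\norm{f}_{H^{s-1}(\Omega)^d}$ by \eqref{eq:errordual}, which is already of the required form; this is how the paper concludes.

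Two smaller remarks:
\begin{enumerate}
\item The paper's proof keeps $\nu\norm{\u-\u_h}_{H^{-s'+\frac12}(\Gamma)^d}$, which is what \eqref{eq:xxx5} controls in the proof of Theorem~\ref{lem:L2velocity}. Your final embedding into $\tilde H^{t'}(\Gamma)^d$ matches the displayed statement but discards the norm actually used later.
\item As in the paper, Step 3 requires $s>\frac12$; otherwise no admissible $s'$ exists and $\partial_nv-qn$ has no trace. So the argument covers $s\in(\frac12,\xi)\cap(\frac12,1]$, which is the range used afterwards.
\end{enumerate}
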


\begin{proof}
Let $(v,q)\in Y_0\times Q$ be the weak solution of the dual Stokes problem \eqref{eq:stokes_dual_neu} with right hand sides $f$ and $-g$ instead of $f$ and $g$. For all $f\in L^2(\Omega)^d$ integration by parts leads to
\begin{align*}
     \nu(\y-\y_h,f) &= 
     \nu(\y-\y_h,-\Delta v +\nabla q) \\ &=
     \nu(\nabla(\y-\y_h),\nabla v)-\nu(\nabla\cdot(\y-\y_h),q)+
     \nu\langle \y-\y_h,\partial_nv-qn\rangle_\Gamma \\ &=
     \nu(\nabla(\y-\y_h),\nabla v)-\nu(\nabla\cdot(\y-\y_h),q)+
     \nu\langle \u-\u_h,\partial_nv-qn\rangle_\Gamma
  \end{align*}
and by using the dense embedding of $L^2(\Omega)^d$ in $H^{s-1}(\Omega)^d$ to
\begin{align*}
     \nu\langle\y-\y_h,f\rangle &= \nu(\nabla(\y-\y_h),\nabla v)-\nu(\nabla\cdot(\y-\y_h),q)+
     \nu\langle \u-\u_h,\partial_nv-qn\rangle_\Gamma
  \end{align*}
for all $f\in H^{s-1}(\Omega)^d$.
Moreover, there holds
\[
     (p-p_h,g) = -(p-p_h,\nabla\cdot v).
\]
Let $(v_h,q_h)$ be the solution of \eqref{eq:stokes_dual_neu_weak_discrete} with right hand sides $f$ and $-g$ instead of $f$ and $g$. We choose $v_h$ as test function in \eqref{eq:weak1} and \eqref{eq:u_h} to obtain
\[
    0=\nu(\nabla(\y-\y_h),\nabla v_h)-(\nabla\cdot v_h,p-p_h).
\]
Furthermore, we choose
$q_h$ as test function in \eqref{eq:weak2} and \eqref{eq:p_h} to get
\[
    0=(\nabla\cdot(\y-\y_h),q_h). 
\]
Together, all four equations lead to
\begin{align}
    \nu\langle\y-\y_h,f\rangle&+(p-p_h,-g)\nonumber\\
    &=\nu(\nabla(\y-\y_h),\nabla (v-v_h))-(\nabla\cdot (v-v_h),p-p_h) \nonumber \\ &\quad
     -\nu(\nabla\cdot(\y-\y_h),q-q_h)+\nu\langle \u-\u_h,\partial_nv-qn\rangle_{\Gamma} \nonumber \\
    &\le
     \norm{\nabla (v-v_h)}_{L^2(\Omega)^{d\times d}} \left(\nu
     \norm{\nabla(\y-\y_h)}_{L^2(\Omega)^{d\times d}}+\norm{p-p_h}_{L^2(\Omega)} 
     \right) \nonumber \\ &\quad+\nu\norm{\nabla(\y-\y_h)}_{L^2(\Omega)^{d \times d}}
     \norm{q-q_h}_{L^2(\Omega)} \nonumber\\
     &\quad+
     \nu\norm{\u-\u_h}_{H^{-s'+\frac12}(\Gamma)^d} 
     \norm{\partial_nv-qn}_{H^{s'-\frac12}(\Gamma)^d}.\label{eq:dualestimate}
\end{align}
By using a standard trace estimate, see e.\,g.\ \cite{Ding:96},  and $s'\in[\frac 12,s]\cap[\frac 12,1)$, we conclude by means of \eqref{eq:aprioridauge}
\begin{equation}\label{eq:regvqbdry}
\norm{\partial_nv-qn}_{H^{s'-\frac12}(\Gamma)^d}\le c(
\norm{f}_{H^{s-1}(\Omega)^d}+\norm{g}_{H^{s}(\Omega)}).
\end{equation}
Note that we introduced $s'$ with the restriction $s'<1$ here, since the normal vector is discontinuous such that the norm on the left hand side is not bounded if $s'=1$.
To conclude the first desired estimate, we use \eqref{eq:dualestimate}, \eqref{eq:errordual} and \eqref{eq:regvqbdry}. If $g\equiv0$ and
\[
    \nabla \cdot \tilde v_h\equiv 0 \quad \forall \tilde v_h\in Y_{0h}^\div,
\]
we obtain $\nabla\cdot (v-v_h)\equiv 0$, hence
\[
    (\nabla\cdot (v-v_h),p-p_h)=0,
\]
such that the second estimate of the assertion follows as above.
\end{proof}

\begin{proof}[Proof of Theorem \ref{lem:L2velocity}]
We now distinguish between the estimates for the velocity and the pressure.
We start with the proof of the estimate for the velocity.
Since we assume $1-s\in[0,\frac12)$, we get $H^{1-s}(\Omega)'=H^{1-s}_0(\Omega)'=H^{s-1}(\Omega)$. Consequently, we have
\begin{align}\label{eq:y-yh sup}
\norm{\y-\y_h}_{H^{1-s}(\Omega)^d}
&= \sup_{\varphi\in H^{s-1}(\Omega)^d\setminus\{0\}}
\frac{\langle \y-\y_h,\varphi\rangle}
     {\norm{\varphi}_{H^{s-1}(\Omega)^d}}.
\end{align}
Choosing $f\equiv\varphi$ and $g\equiv0$ in \eqref{eq:femdual} 
and applying the estimates \eqref{eq:est_energynorm} or \eqref{eq:bestapproxestimate}, respectively, and \eqref{eq:xxx5} yields
\[
    \nu\norm{\y-\y_h}_{H^{1-s}(\Omega)^d}\le ch^{s+\min(t-\frac12,k)} \left(\nu\norm{\y}_{H^{t+\frac12}(\Omega)^d}+
    \norm{p}_{H^{t-\frac12}(\Omega)}+\nu\norm{\u}_{\tilde H^{t'}(\Gamma)^d}\right)
\]
for $s\in(\frac12,\xi)\cap(\frac12,1]$, which is the desired estimate for the velocity having in mind that $(y,p)=(\bar y,\nu \bar p)$. 
We now come to the estimates for the pressure. The definition of the $H^{s}(\Omega)'$-norm and the fact that $p-p_h$ belongs to $L^2_0(\Omega)$, yield
\[
    \norm{p-p_h}_{H^{s}(\Omega)'}=\sup_{\varphi\in H^s(\Omega)\setminus\{0\}}
    \frac{( p-p_h,\varphi)}{\norm{\varphi}_{H^s(\Omega)}}=
    \sup_{\varphi\in H^s(\Omega)\setminus\{0\}}
    \frac{(p-p_h,\varphi-\frac{1}{\abs{\Omega}}(\varphi,1))}{\norm{\varphi}_{H^s(\Omega)}}.
\]
We observe that
\begin{align*}
    \norm{\varphi-\abs{\Omega}^{-1}(\varphi,1)}_{H^s(\Omega)}^2&=\abs{\varphi-\abs{\Omega}^{-1}(\varphi,1)}_{H^s(\Omega)}^2+\norm{\varphi-\abs{\Omega}^{-1}(\varphi,1)}_{L^2(\Omega)}^2\\
    &=\abs{\varphi}_{H^s(\Omega)}^2+\norm{\varphi-\abs{\Omega}^{-1}(\varphi,1)}_{L^2(\Omega)}^2
    \le \abs{\varphi}_{H^s(\Omega)}^2+\norm{\varphi}_{L^2(\Omega)}^2
\end{align*}
such that
\begin{equation}\label{eq:varphimean}
    \norm{\varphi-\abs{\Omega}^{-1}(\varphi,1)}_{H^s(\Omega)}\le\norm{\varphi}_{H^s(\Omega)}.
\end{equation}
Hence, $\varphi-\frac{1}{\abs{\Omega}}(\varphi,1)$ belongs to $H^s(\Omega)\cap L^2_0(\Omega)$ for any $s\in[0,1]$ if $\varphi$ belongs to $H^s(\Omega)$. However, it does not belong to $V^{0,2}_{-1}(\Omega)$ as no constant function unequal to zero belongs to $V^{0,2}_{-1}(\Omega)$. 
As a consequence, by choosing $f\equiv0$ and $g\equiv\varphi-\frac{1}{\abs{\Omega}}(\varphi,1)$ in \eqref{eq:femdual} and by using the estimates \eqref{eq:est_energynorm} or \eqref{eq:bestapproxestimate}, respectively, and \eqref{eq:xxx5}, we obtain
\[
    \norm{p-p_h}_{H^{s}(\Omega)'}\le ch^{s+\min(t-\frac12,k)} \left(\nu\norm{\y}_{H^{t+\frac12}(\Omega)^d}+
    \norm{p}_{H^{t-\frac12}(\Omega)}+\nu\norm{\u}_{\tilde H^{t'}(\Gamma)^d}\right)
\]
for $s\in(\frac12,\xi)\cap(\frac12,1)$. 
If $\xi>1$ and $s=1$, we get
\[
    \norm{p-p_h}_{(H^{1}(\Omega)\cap L^2_0(\Omega)\cap V_{-1}^{0,2}(\Omega))'}=\sup_{\varphi\in (H^{1}(\Omega)\cap L^2_0(\Omega)\cap V_{-1}^{0,2}(\Omega))\setminus\{0\}}
    \frac{( p-p_h,\varphi)}{\norm{\varphi}_{H^1(\Omega)}+\norm{\varphi}_{V_{-1}^{0,2}(\Omega)}}.
\]
Choosing $f\equiv0$ and $g\equiv\varphi$ in \eqref{eq:femdual} yields similarly to above
\begin{multline*}
    \norm{p-p_h}_{(H^{1}(\Omega)\cap L^2_0(\Omega)\cap V_{-1}^{0,2}(\Omega))'}\\ \le ch^{1+\min(t-\frac12,k)} \Big(\nu\norm{\y}_{H^{t+\frac12}(\Omega)^d}+
    \norm{p}_{H^{t-\frac12}(\Omega)}
    +\nu\norm{\u}_{\tilde H^{t'}(\Gamma)^d}\Big).
\end{multline*}
The desired estimates for the pressure now follow from the observation $(y,p)=(\bar y,\nu \bar p)$. 
Finally, if $\u-\u_h\equiv 0$ then the estimate \eqref{eq:femdual} does not explicitly depend on $\norm{\u}_{\tilde H^{t'}(\Gamma)^d}$, and $\norm{\u}_{\tilde H^{t'}(\Gamma)^d}$ does not appear in the estimates of the assertion.
\end{proof}

\section{\label{sec:veryweakformulation}Very weak formulation}

\subsection{\label{sec:4.1}Main results}

The weak formulation
\begin{align}
  (\y,p)&\in Y_*\times Q = \{v\in H^1(\Omega)^d:v=\u\text{ a.\,e.~on }\Gamma\}\times L^2_0(\Omega):\nonumber \\
  &\nu(\nabla \y,\nabla v)-\nu(\nabla\cdot \y,q)-(\nabla\cdot v,p)=0 \quad
  \forall (v,q)\in H^1_0(\Omega)^d\times L^2_0(\Omega),\label{eq:weak}
\end{align}
is well defined for $\u\in H^{\frac12}(\Gamma)^d$, see Lemma \ref{lem:2.1}. We recall that $L^2(\Omega)=L^2_0(\Omega)\oplus \mathcal{P}_0(\Omega)$, where $\mathcal{P}_0(\Omega)$ denotes the space of constant functions in $\Omega$. If $\u$ additionally fulfills
\begin{equation}\label{eq:compatibledata}
	\langle \u, n \rangle_\Gamma=0
\end{equation}
such that $(\nabla\cdot \y,q)=0$ for all $q\in\mathcal{P}_0(\Omega)$,
then the pair $(\y,p)$ satisfies
\[
	\nu(\nabla \y,\nabla v)-\nu(\nabla\cdot \y,q)-(\nabla\cdot v,p)=0 \quad
	\forall (v,q)\in H^1_0(\Omega)^d\times L^2(\Omega)
\]
such that it solves \eqref{eq:stokesclassical} in the sense of distributions, see Remark \ref{remark:origsys}.
A further partial
integration leads to 
\[
    \nu(y,-\Delta v +\nabla q)-(\nabla\cdot v,p)=\nu\langle \u,qn-\partial_n v\rangle_\Gamma
\]
provided $v$ and $q$ are smooth enough. This motivates to consider the very weak formulation
\[
  (\y,p)\in \mathcal{ Y}:\quad a((\y,p),(v,q)) = 
  \nu\langle \u,qn-\partial_n v\rangle_\Gamma \quad \forall (v,q)\in \mathcal{V}
\]
with the bilinear form $a:\mathcal{ Y}\times\mathcal{V}\to\R$,
\begin{align}\label{eq:bilinearform}
	a((\y,p),(v,q)):=\nu(\y,-\Delta v+\nabla q)-\langle \nabla\cdot v,p\rangle.
\end{align}
Appropriate spaces $\mathcal{ Y}$ and
$\mathcal{V}$ are
\[
    \mathcal{ Y}:=\mathcal{ Y}_1=L^2(\Omega)^d\times 
	(H^1(\Omega)\cap L_0^2(\Omega)\cap V_{-1}^{0,2}(\Omega))'
\]
and
\begin{alignat*}{2}
	\mathcal{V}&:=\mathcal{V}_1=\{(v,q)\in H^1_0(\Omega)^d\times L^2_0(\Omega)\colon
	&-\Delta v+\nabla q &\in L^2(\Omega)^d,\\ 
	&&\nabla\cdot v&\in H^1(\Omega)\cap L^2_0(\Omega)\cap V_{-1}^{0,2}(\Omega)\},
\end{alignat*}
see also the definition of the spaces $\mathcal{ Y}_s$ and $\mathcal{V}_s$, respectively, in \eqref{eq:Y_s} and \eqref{eq:V_s}.
We will see in Remark \ref{rem:t>1/2} that a solution of the very weak formulation is also a weak solution provided that the solution is smooth enough. Hence, the very weak solution is a more general notion of solution compared to the concept of a weak solution.

Let us now state the main theorem of this section. Its proof is postponed to Section~\ref{sec:vwf_proofs}.
\begin{theorem}[existence of very weak solution]\label{thm:vwf_wellposed}
  Let $\u\in H^{\frac12-s}(\Gamma)^d$ where $s\in[0,\xi)\cap[0,1)$ with $\xi$ from Subsection~\ref{sec:regularity}. Then there is a unique element $(\y,p)\in
  H^{1-s}(\Omega)^d\times \{v\in H^s(\Omega)'\colon \langle v,1\rangle =0\}$ satisfying
  \begin{align}\label{eq:veryweak1} a((\y,p),(v,q))=\nu\langle \u,qn-\partial_n v\rangle_\Gamma\quad 
    \forall(v,q)\in \mathcal{V}.
  \end{align}
  Moreover, there holds
  \[
  	\nu\norm{\y}_{H^{1-s}(\Omega)^d}+\norm{p}_{H^{s}(\Omega)'}\le c \nu\norm{\u}_{H^{\frac12-s}(\Gamma)^d}.
  \]
\end{theorem}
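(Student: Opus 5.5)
The plan is to apply the Babu\v{s}ka--Lax--Milgram theorem to the bilinear form $a$ from \eqref{eq:bilinearform}, posed on $\mathcal{Y}_s\times\mathcal{V}_s$ with $\mathcal{Y}_s$ from \eqref{eq:Y_s} and $\mathcal{V}_s$ the corresponding Stokes domain. The solution space $\mathcal{Y}_s$ is exactly the space in the statement: for $s<\frac12$ we have $H^{1-s}_0=H^{1-s}$, and for $s>\frac12$ the $H^{1-s}$ norm is the relevant one. The pressure component, a functional on $H^s\cap L^2_0$, is identified with an element of $H^s(\Omega)'$ vanishing on constants, as in \eqref{eq:varphimean}. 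Testing \eqref{eq:veryweak1} only with $\mathcal{V}_s$ or with $\mathcal{V}=\mathcal{V}_1$ gives the same solution by density, so I will work with $\mathcal{V}_s$ and recover \eqref{eq:veryweak1} on $\mathcal{V}\subset\mathcal{V}_s$ at the end.

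\textbf{Step 1 (boundedness and right hand side).} Writing $a((\y,p),(v,q))=\langle (\nu\y,p),(-\Delta v+\nabla q,-\nabla\cdot v)\rangle$ as a $\mathcal{Y}_s\times\mathcal{Y}_s'$ duality, the equivalence \eqref{eq:normVs} gives $|a|\le c\max(\nu,1)\norm{(\y,p)}_{\mathcal{Y}_s}\norm{(v,q)}_{\mathcal{V}_s}$. For the functional $\ell(v,q)=\nu\langle\u,qn-\partial_nv\rangle_\Gamma$ I use \eqref{eq:apriori}: for $s\in[0,\xi)\cap[0,1)$, $s\ne\frac12$, we have $(v,q)\in H^{1+s}\times H^s$. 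The trace theorem then gives $\partial_nv-qn\in H^{s-\frac12}(\Gamma)^d$ facewise, with norm bounded by $c\norm{(v,q)}_{\mathcal{V}_s}$. Hence $|\ell(v,q)|\le c\nu\norm{\u}_{H^{\frac12-s}(\Gamma)^d}\norm{(v,q)}_{\mathcal{V}_s}$. I expect this trace step to need separate treatment in three regimes. For $s>\frac12$, face traces have positive order and the jump of $n$ at edges is harmless in the pairing with $H^{\frac12-s}$ because $s-\frac12<\frac12$. For $s=0$ one needs the $H^{-1/2}$ normal trace (Cauchy stress trace via the Green formula), using $-\Delta v+\nabla q\in H^{-1}$. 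For $s\in(0,\frac12)$ I would interpolate between these two cases.

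\textbf{Step 2 (inf-sup conditions).} This is the key step. For the first inf-sup condition, given $(\y,p)\in\mathcal{Y}_s$, solve the dual problem \eqref{eq:stokes_dual_neu_weak} with data $(f,g)$ taken as the Riesz representatives in $\mathcal{Y}_s'$ of $(\nu\y,-p)$. These data are admissible: $g$ lies in $H^s\cap L^2_0$, so the compatibility condition holds. The resulting $(v,q)\in\mathcal{V}_s$ satisfies $a((\y,p),(v,q))=\norm{(\nu\y,p)}^2_{\mathcal{Y}_s}$ and $\norm{(v,q)}_{\mathcal{V}_s}\sim\norm{(\nu\y,p)}_{\mathcal{Y}_s}$ by \eqref{eq:normVs}. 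For the second inf-sup condition (non-degeneracy), if $a((\y,p),(v,q))=0$ for all $\y,p$, then $-\Delta v+\nabla q=0$ and $\nabla\cdot v=0$ in the appropriate dual sense. Uniqueness of the weak solution (the case $s=0$ of \eqref{eq:aprioridauge}) then gives $(v,q)=0$.

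\textbf{Step 3 (conclusion).} The Babu\v{s}ka--Lax--Milgram theorem now yields existence, uniqueness and $\norm{(\nu\y,p)}_{\mathcal{Y}_s}\le c\nu\norm{\u}_{H^{\frac12-s}}$. Because the inf-sup constant is taken with respect to $(\nu\y,p)$, the constant in this bound does not depend on $\nu$. The main obstacle is Step 1 near $s=\frac12$, and the value $s=\frac12$ itself, which the regularity result \eqref{eq:apriori} excludes. There I would close the gap by interpolation, or treat it through Lemma \ref{lem:2.1}, checking that the weak solution is very weak by integration by parts.
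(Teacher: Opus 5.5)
\textbf{Gap: Step 1 fails for $s\in[0,\frac12)$.} Your Steps~2 and~3 are fine; they are essentially Lemma~\ref{lem:infsup} and Lemma~\ref{lemma:BLM}. Step~1 is also fine for $s\in(\frac12,\xi)\cap(\frac12,1)$, where it coincides with Corollary~\ref{cor:BLM1}.

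The failure for $s<\frac12$ traces back to your claim that $H^{1-s}_0(\Omega)=H^{1-s}(\Omega)$ for $s<\frac12$. It is the other way round: equality holds for $s>\frac12$. For $s<\frac12$ one has $1-s>\frac12$, so traces exist and $H^{1-s}_0(\Omega)^d$ is the proper closed subspace of functions with zero trace. Any Babu\v{s}ka--Lax--Milgram solution in $\mathcal{Y}_s$ therefore has $y|_\Gamma=0$. It cannot be the very weak solution when $u\neq0$: for $u\in H^{\frac12}(\Gamma)^d$, uniqueness on $\mathcal{V}_1$ (Remark~\ref{rem:allthesame}) would force it to equal the weak solution, whose trace is $u$.

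Consistently, your functional $\ell(v,q)=\nu\langle u,qn-\partial_n v\rangle_\Gamma$ is not bounded on $\mathcal{V}_s$ for $s<\frac12$. The reason is that $\nabla v$ and $q$ lie only in $H^s(\Omega)$, which has no traces. For example, if $u\cdot n\not\equiv0$, take $v=0$ and smooth $q\in L^2_0(\Omega)$. By \eqref{eq:normVs},
\[
\norm{(0,q)}_{\mathcal{V}_s}\le c\norm{\nabla q}_{H^{s-1}(\Omega)^d}\le c\norm{q}_{H^s(\Omega)}.
\]
Since $C_0^\infty(\Omega)$ is dense in $H^s(\Omega)$, you can keep the boundary values of $q$ essentially fixed while $\norm{q}_{H^s(\Omega)}\to0$, so $\ell(0,q)$ stays away from zero while the norm vanishes.

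Your Green-formula trace at $s=0$ does not exist either. Defining $(\nabla v-qI)n$ in $H^{-\frac12}(\Gamma)^d$ requires $-\Delta v+\nabla q$ to act on $H^1(\Omega)^d$, i.e.\ on non-vanishing extensions. Membership in $\mathcal{V}_0$ only puts it in $H^{-1}(\Omega)^d=(H^1_0(\Omega)^d)'$. Since the $s=0$ endpoint is false, interpolating the bound for $\ell$ toward $s>\frac12$ cannot help.

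\textbf{Missing idea: lifting.} The paper writes $y=y_0+Eu$ with $E:H^{\frac12-s}(\Gamma)^d\to H^{1-s}(\Omega)^d$. Green's formula on $\mathcal{V}_1$ (Lemma~\ref{lem:tildeF=F-fast}) then gives
\[
  \nu\langle u,qn-\partial_n v\rangle_\Gamma-\nu(Eu,-\Delta v+\nabla q)=\nu\langle\nabla Eu,qI-\nabla v\rangle\qquad\forall (v,q)\in\mathcal{V}_1 .
\]
The problematic boundary term and the unbounded pairing of $Eu\notin H^{1-s}_0$ with $-\Delta v+\nabla q\in H^{s-1}$ are thereby moved to the left-hand side. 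The remaining functional involves only interior quantities. It is bounded by $c\nu\norm{u}_{H^{\frac12-s}(\Gamma)^d}\norm{(v,q)}_{\mathcal{V}_s}$, because $qI-\nabla v\in H^s(\Omega)^{d\times d}$ by \eqref{eq:apriori} and $\nabla Eu\in H^{-s}(\Omega)^{d\times d}=(H^s(\Omega)^{d\times d})'$ for $s<\frac12$. Babu\v{s}ka--Lax--Milgram then yields $(y_0,p)\in\mathcal{Y}_s$ with $y_0\in H^{1-s}_0(\Omega)^d$, and $(y_0+Eu,p)$ solves \eqref{eq:veryweak1}.

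An alternative repair is to obtain $s=0$ from Lemma~\ref{lem:2.1}, checking by integration by parts that the weak solution satisfies \eqref{eq:veryweak1}. You would then interpolate the \emph{solution operator} $u\mapsto(y,p)$, not the bound on $\ell$, between $s=0$ and some $s_1\in(\frac12,\xi)\cap(\frac12,1)$.

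Note that Lemma~\ref{lem:2.1} corresponds to $s=0$ ($u\in H^{\frac12}$), not to $s=\frac12$ ($u\in L^2$). It therefore does not settle $s=\frac12$ as you suggest; the paper handles that case by interpolating between $s=\frac12\pm\varepsilon$.
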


\begin{remark}[importance of $s<\xi$]
  Note that the condition $s<\xi$ is important in Theorem
  \ref{thm:vwf_wellposed} if $\xi<1$, see the discussion in
  \cite[Subsection 2.3]{ALN:24}. \qed
\end{remark}

Let us define the space $\mathcal{\tilde V}$ by
\begin{alignat*}{2}
	\mathcal{\tilde V}&:=\{(v,q)\in H^1_0(\Omega)^d\times L^2(\Omega)\colon 
	&-\Delta v+\nabla q &\in L^2(\Omega)^d,\\
	&&\nabla\cdot v&\in H^1(\Omega)\cap V_{-1}^{0,2}(\Omega)\}
\end{alignat*}
admitting $q\in\mathcal{P}_0(\Omega)$.
Analogously to the weak setting, we get the following results.

\begin{corollary}[from $\mathcal{V}$ to $\mathcal{\tilde V}$]\label{cor:vwf_wellposed}
	Let $\u\in H^{\frac12-s}(\Gamma)^d$ with $s\in[0,\xi)\cap[0,1)$ satisfy the compatibility condition~\eqref{eq:compatibledata}.
	Then there is a unique element $(\y,p)\in
	H^{1-s}(\Omega)^d\times \{v\in H^s(\Omega)'\colon \langle v,1\rangle =0\}$ satisfying
	\begin{align}\label{eq:veryweak}
		a((\y,p),(v,q))=\nu\langle \u,qn-\partial_n v\rangle_\Gamma\quad 
		\forall(v,q)\in \mathcal{\tilde V}.
	\end{align}
	Moreover, there holds
	\[
	\nu\norm{\y}_{H^{1-s}(\Omega)^d}+\norm{p}_{H^{s}(\Omega)'}\le c \nu\norm{\u}_{H^{\frac12-s}(\Gamma)^d}.
	\]
\end{corollary}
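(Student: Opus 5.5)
The plan is to reduce Corollary~\ref{cor:vwf_wellposed} to Theorem~\ref{thm:vwf_wellposed}. Let $(\y,p)$ be the unique very weak solution of \eqref{eq:veryweak1} given by the theorem; it already has the asserted regularity and satisfies the a priori bound. Two things remain. First, $(\y,p)$ satisfies \eqref{eq:veryweak} for all $(v,q)\in\mathcal{\tilde V}$. Second, the solution of \eqref{eq:veryweak} is unique.

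\emph{Existence.} Decompose an arbitrary $(v,q)\in\mathcal{\tilde V}$ as $q=q_0+\bar q$ with $q_0\in L^2_0(\Omega)$ and $\bar q=\abs{\Omega}^{-1}(q,1)\in\mathcal{P}_0(\Omega)$. Since $v\in H^1_0(\Omega)^d$, the Gauss theorem gives $(\nabla\cdot v,1)=\langle v,n\rangle_\Gamma=0$. Hence $\nabla\cdot v\in H^1(\Omega)\cap L^2_0(\Omega)\cap V^{0,2}_{-1}(\Omega)$. Moreover $\nabla \bar q=0$, so $-\Delta v+\nabla q_0=-\Delta v+\nabla q\in L^2(\Omega)^d$, and therefore $(v,q_0)\in\mathcal V$. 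By bilinearity,
\[
a((\y,p),(v,q))=a((\y,p),(v,q_0))+\nu(\y,\nabla\bar q)=a((\y,p),(v,q_0)),
\]
because the pressure part $\langle\nabla\cdot v,p\rangle$ involves only $v$. On the right-hand side,
\[
\nu\langle \u,qn-\partial_nv\rangle_\Gamma=\nu\langle \u,q_0n-\partial_nv\rangle_\Gamma+\nu\bar q\langle \u,n\rangle_\Gamma ,
\]
and the last term vanishes by the compatibility condition \eqref{eq:compatibledata}. Applying \eqref{eq:veryweak1} with the test pair $(v,q_0)$ then yields \eqref{eq:veryweak}.

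\emph{Uniqueness.} Since $\mathcal V\subset\mathcal{\tilde V}$, any solution of \eqref{eq:veryweak} also solves \eqref{eq:veryweak1}. It therefore coincides with the solution from Theorem~\ref{thm:vwf_wellposed}. The stability estimate is inherited unchanged.

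The main obstacle is making sure all pairings are well defined for the low-regularity data. Two points need care.
\begin{itemize}
\item The term $\langle \u,qn\rangle_\Gamma$ for constant $q$ requires $n\in H^{s-\frac12}(\Gamma)^d$. This holds because $s-\frac12<\frac12$ and $n$ is piecewise constant.
\item The term $\langle\nabla\cdot v,p\rangle$ must be interpreted in the duality $(H^1\cap L^2_0\cap V^{0,2}_{-1})'$ versus $H^1\cap L^2_0\cap V^{0,2}_{-1}$. This is exactly why I verified above that $\nabla\cdot v$ has zero mean.
\end{itemize}
I would also check that $\partial_n v$ and $q_0n$ make sense in the trace spaces used in the theorem, with the same interpretation there. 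This follows because the identity holds termwise for $(v,q_0)\in\mathcal V$ and the constant part is treated separately.
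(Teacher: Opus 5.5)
Your proposal is correct and follows the paper's argument. The paper's proof likewise notes that the only test functions $\mathcal{\tilde V}$ adds beyond $\mathcal{V}$ are $v\equiv 0$ with constant $q$, for which $a((\y,p),(0,q))=0$ and $\langle \u,qn\rangle_\Gamma=0$ by the compatibility condition. You make explicit what the paper leaves implicit: the splitting $q=q_0+\bar q$ with $(v,q_0)\in\mathcal{V}$ (using $(\nabla\cdot v,1)=0$ for $v\in H^1_0(\Omega)^d$), and uniqueness via $\mathcal{V}\subset\mathcal{\tilde V}$.
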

\begin{proof}
	By changing $\mathcal{V}$ to $\mathcal{\tilde V}$, we get an additional equation, which may not be fulfilled by the solution of \eqref{eq:veryweak1}.
	Indeed, if we choose $(v,q)\in\mathcal{\tilde V}$ satisfying $v\equiv0$ and $q\in \mathcal{P}_0(\Omega)$, we get
	$
			a((\y,p),(0,q))=0.
	$
	To also obtain $\langle \u,qn-\partial_n v\rangle_\Gamma=\langle \u,qn\rangle_\Gamma=0$ we require \eqref{eq:compatibledata} in addition.
\end{proof}

\begin{remark}[on the parameter $s$]\label{rem:Vt}
	In our notation, the parameter $s$ is the amount of regularity that
	the boundary datum misses for the existence of a weak solution, this
	means that for $s=0$ a weak solution exists but not for $s>0$. One could set
	$t=\frac12-s$ and reformulate Corollary \ref{cor:vwf_wellposed} such that 
    \[
	(\y,p)\in H^{t+\frac12}(\Omega)^d\times
	\{v\in H^{\frac12-t}(\Omega)'\colon \langle v,1\rangle =0\}
	\]
    for $\u\in H^t(\Gamma)^d$, $t\in(\frac12-\xi,\frac12]\cap(-\frac12,\frac12]$, satisfying \eqref{eq:compatibledata}. \qed	
\end{remark}

\begin{remark}[on weak solutions]\label{rem:t>1/2}
  The case $s=0$ in Corollary \ref{cor:vwf_wellposed} ($t=\frac12$ in
  Remark \ref{rem:Vt}) shows in particular that every weak solution is
  also a very weak solution. \qed
\end{remark}

\begin{lemma}[on solving \eqref{eq:stokesclassical} in the distributional sense] \label{lem:4.6}
The solution $(\y,p)$ of \eqref{eq:veryweak1} or \eqref{eq:veryweak}  satisfies $-\nu\Delta \y +\nabla p = 0$ in the distributional sense. If the boundary datum $u$ satisfies the compatibility condition \eqref{eq:compatibledata}, i.\,e., the solution of \eqref{eq:veryweak1} also solves \eqref{eq:veryweak}, then the solution additionally satisfies $\nabla \cdot \y = 0$ in the distributional sense. If $u\in H^\varepsilon(\Gamma)^d$ with $\varepsilon>0$ then the solution of \eqref{eq:veryweak1} or \eqref{eq:veryweak} satisfies the boundary condition $y=u$ almost everywhere on $\Gamma$.
\end{lemma}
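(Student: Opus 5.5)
The proof has three parts, in the order of the lemma: the momentum equation, the divergence equation under compatibility, and the boundary condition.

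\textbf{Momentum equation.} I would test \eqref{eq:veryweak1} with pairs $(v,0)$, where $v\in C_0^\infty(\Omega)^d$ and $\nabla\cdot v=0$. Such a pair lies in $\mathcal{V}$: indeed $v\in H^1_0$, $-\Delta v\in L^2$, and $\nabla\cdot v=0$ belongs trivially to $H^1\cap L^2_0\cap V^{0,2}_{-1}$. The right-hand side vanishes because $v$ and $\partial_n v$ vanish near $\Gamma$. We obtain $\nu\langle \y,-\Delta v\rangle=0$ for all solenoidal test fields. By de Rham's theorem this already gives a distribution $\tilde p$ with $-\nu\Delta\y+\nabla\tilde p=0$, but we want it with $p$ itself.

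I would therefore test with general $v\in C_0^\infty(\Omega)^d$. The issue is whether $\nabla\cdot v\in V^{0,2}_{-1}$. Since $v$ has compact support, $\nabla\cdot v$ vanishes near the vertices and edges, so $r\,\nabla\cdot v\in L^2$ is not the problem; the problem is the mean value. But $(\nabla\cdot v,1)=\int_\Gamma v\cdot n=0$, so $\nabla\cdot v\in L^2_0$. Hence $(v,0)\in\mathcal{V}$ for every $v\in C_0^\infty(\Omega)^d$, and the right-hand side vanishes. This gives $\nu\langle\y,-\Delta v\rangle-\langle\nabla\cdot v,p\rangle=0$, which is exactly $\langle -\nu\Delta\y+\nabla p,v\rangle=0$ in the distributional sense. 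The same argument works for \eqref{eq:veryweak}, since $\mathcal V\subset\tilde{\mathcal V}$ in this respect.

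\textbf{Divergence equation.} Under \eqref{eq:compatibledata} the solution satisfies \eqref{eq:veryweak}, so we may test with $(0,q)$ for $q\in C_0^\infty(\Omega)$. Such a $q$ is admissible in $\tilde{\mathcal V}$ because the mean-zero constraint is dropped there, and $\nabla q\in L^2$. The right-hand side is $\nu\langle\u,qn\rangle_\Gamma=0$ since $q$ vanishes on $\Gamma$. This yields $\nu(\y,\nabla q)=0$, i.e.\ $\nabla\cdot\y=0$ in $\mathcal{D}'(\Omega)$. Without compatibility only $q\in L^2_0$ is allowed, and one obtains $\nabla\cdot\y=$ const, consistent with Remark~\ref{remark:origsys}.

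\textbf{Boundary condition.} This is the main obstacle. For $\u\in H^\varepsilon(\Gamma)^d$ with $\varepsilon\le\frac12$, Theorem~\ref{thm:vwf_wellposed} with $s=\frac12-\varepsilon$ gives $\y\in H^{\frac12+\varepsilon}(\Omega)^d$, so $\y|_\Gamma\in H^\varepsilon(\Gamma)^d$ is well defined. We need $s<\xi$, which holds because $\xi>\frac12$; for $\varepsilon>\frac12$ we reduce to $\varepsilon=\frac12$.

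The plan is to carry out the Green formula in reverse. For $(v,q)\in\mathcal{V}$ that are more regular, say $v\in H^{1+s}$ and $q\in H^s$ by \eqref{eq:apriori}, I want to justify
\[
\nu(\y,-\Delta v+\nabla q)-\langle\nabla\cdot v,p\rangle=\nu\langle \y|_\Gamma,qn-\partial_nv\rangle_\Gamma+\bigl[\nu(\nabla\y,\nabla v)-\nu(\nabla\cdot\y,q)-(\nabla\cdot v,p)\bigr]_{\text{dist}},
\]
where the bracket is interpreted via the distributional equations already established. The pairing $H^{\frac12+\varepsilon}$ against $H^{\frac32-\varepsilon}$ with a boundary term is valid by density. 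The bracket vanishes by the first two parts, using a density argument for the mixed terms of the form $H^{\frac12+\varepsilon}$ paired with $H^{\frac12-\varepsilon}$. Subtracting \eqref{eq:veryweak1} then gives $\langle \y|_\Gamma-\u,qn-\partial_nv\rangle_\Gamma=0$.

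The remaining step is to show that the traces $qn-\partial_n v$, for $(v,q)\in\mathcal V$, are dense in $H^{-\varepsilon}(\Gamma)^d$, or at least in a space separating points of $H^\varepsilon$. To do this I would take $v$ with $v=0$ and $\partial_n v=-\phi$ on a face, for $\phi$ smooth and compactly supported in the interior of a face, with $q=0$, and adjust the divergence by a corrector so that $\nabla\cdot v$ is admissible. Such $\phi$ are dense in $L^2$ on each face, which yields $\y=\u$ a.e.\ on $\Gamma$. Verifying that this construction stays inside $\mathcal V$, in particular satisfies the $V^{0,2}_{-1}$ and mean-zero conditions on $\nabla\cdot v$, is where I expect the technical work to lie.
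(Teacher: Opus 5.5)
Your first two parts match the paper: you test with $(v,0)$ for $v\in C_0^\infty(\Omega)^d$, and with $(0,q)$ for $q\in C_0^\infty(\Omega)$, the latter admissible only in $\tilde{\mathcal V}$. You also supply the check $(\nabla\cdot v,1)=\int_\Gamma v\cdot n=0$, which the paper leaves implicit.

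For the boundary condition you take a genuinely different route.

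\textbf{The paper's route.} It argues by stability. It approximates $u$ by $u_\ell\in H^{\frac12}(\Gamma)^d$ and uses that the weak solutions $y_\ell$, whose trace is $u_\ell$ by definition of $Y_*$, are also very weak solutions (Remark~\ref{rem:t>1/2}). Linearity, the a priori estimate with $s=\frac12-\varepsilon$ and the trace theorem then give $y_\ell|_\Gamma\to y|_\Gamma$ in $H^\varepsilon(\Gamma)^d$.

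\textbf{Your route.} Yours is intrinsic: a Green formula at the $H^{\frac12+\varepsilon}$ level turns the very weak identity into $\langle y|_\Gamma-u,\partial_n v\rangle_\Gamma=0$ for localized test functions. This works, and it needs neither uniqueness nor the weak/very weak consistency. However, the difficulty is not where you place it.

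\emph{Membership in $\mathcal V$ is immediate; no corrector is needed.} Take $v$ smooth, vanishing on $\Gamma$, and supported near a compact subset of an open face. Then $(\nabla\cdot v,1)=\int_\Gamma v\cdot n=0$, and $\nabla\cdot v$ vanishes near all vertices and edges, so it lies in $H^1\cap L^2_0\cap V^{0,2}_{-1}$.

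\emph{You only need pairs with $q=0$.} So you can drop the general Green formula over all of $\mathcal V$ and the divergence part of your bracket. If you only use $q\in H^{\frac12-\varepsilon}(\Omega)$, its trace is not even defined.

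\emph{What actually needs proof} is the following.
(i) The identity $\nu(y,-\Delta v)=\nu\langle\nabla y,\nabla v\rangle-\nu\langle y|_\Gamma,\partial_n v\rangle_\Gamma$ for $y\in H^{\frac12+\varepsilon}(\Omega)^d$. This follows from density of smooth functions and boundedness of $\nabla\colon H^{\frac12+\varepsilon}(\Omega)\to H^{\frac12-\varepsilon}(\Omega)'$.
(ii) The identity $\nu\langle\nabla y,\nabla v\rangle=\langle\nabla\cdot v,p\rangle$ for this $v$, which is not compactly supported. It follows from three facts: the distributional momentum equation, $p\in H^{\frac12-\varepsilon}(\Omega)'$, and the density of $C_0^\infty(\Omega)^d$ in $\{w\in H^{\frac32-\varepsilon}(\Omega)^d: w|_\Gamma=0\}$. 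The density holds because $\frac32-\varepsilon<\frac32$, so $\partial_n v\neq0$ is no obstruction.

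\textbf{Comparison.} The paper's route is shorter and reduces everything to the $H^1$ theory. Yours shows directly how the boundary term of the very weak formulation determines the trace.
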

\begin{proof}
Choosing $v$ from $C_0^\infty(\Omega)^d$ and $q\equiv 0$ in \eqref{eq:veryweak1} or \eqref{eq:veryweak} shows that the solution $(\y,p)$ satisfies $-\nu\Delta \y +\nabla p = 0$ in the distributional sense. Moreover, choosing $v\equiv0$ and $q$ from $C_0^\infty(\Omega)$ in \eqref{eq:veryweak} (which is possible in \eqref{eq:veryweak} as $C_0^\infty(\Omega)$ is dense in $L^2(\Omega)$, but not in \eqref{eq:veryweak1} as functions from $C_0^\infty(\Omega)$ do not belong to $L^2_0(\Omega)$ in general) shows that $v$ satisfies $\nabla \cdot \y = 0$ in the distributional sense. 

It remains to discuss the equation on the boundary. Using $s=\frac12-\varepsilon<\frac12$ in Theorem~\ref{thm:vwf_wellposed} or Corollary \ref{cor:vwf_wellposed}, we find that $\u$ and  $\y|_\Gamma$ belong to $H^\varepsilon(\Gamma)^d$, where we used the trace theorem for the latter result, see e.\,g.\ \cite{Ding:96}. Further, let $\u_\ell\in H^{\frac12}(\Gamma)^d$ be a sequence which converges to $\u$ in $H^\varepsilon(\Gamma)^d$ for $\ell$ tending to infinity and let $(\y_\ell,p_\ell)$ be corresponding weak solutions with $\y_\ell=\u_\ell$ almost everywhere on $\Gamma$ solving \eqref{eq:weak}. There holds
	\[
		\norm{\y-\u}_{H^\varepsilon(\Gamma)^d}\le \norm{\y-\y_\ell}_{H^\varepsilon(\Gamma)^d}+\norm{\u_\ell-\u}_{H^\varepsilon(\Gamma)^d}.
	\]
	Due to the trace theorem and the a priori estimate from Corollary \ref{cor:vwf_wellposed}, we further get
	\[
		\norm{\y-\y_\ell}_{H^\varepsilon(\Gamma)^d}\le c \norm{\y-\y_\ell}_{H^{\frac12+\varepsilon}(\Omega)^d}\le c \norm{\u-\u_\ell}_{H^\varepsilon(\Gamma)^d}.
	\]
	If we let $\ell$ tend to infinity, we obtain $\y=\u$ almost everywhere on $\Gamma$. 
\end{proof}

\begin{remark}[on the more general Stokes problem]
  The proof of Theorem \ref{thm:vwf_wellposed} is based on an inf-sup
  condition for the bilinear form and the application of the Babu\v
  ska--Lax--Milgram theorem. This means that the result can be proved
  for any right hand side $F\in \mathcal{V}'$. An important example is when
  the Stokes differential equations are non-homogeneous,
  \begin{align*}
    -\nu\Delta \y +\nabla p &= f\quad\text{in }\Omega,\\
    \nabla \cdot \y &=g\quad\text{in }\Omega,\\
    \y&=\u\quad\text{on }\Gamma.
  \end{align*}
  The linear functional on the right hand side of the very weak formulation is then
  \[
    \langle f,v\rangle-\langle g,q\rangle+\nu\langle \u,qn-\partial_n v\rangle_\Gamma
  \]
  such that the result of Theorem \ref{thm:vwf_wellposed} holds if the
  functional is well defined, i.\,e.\ for $(f,g)\in\mathcal{V}'$. 
  Corollary \ref{cor:vwf_wellposed} holds for $(f,g)\in\tilde{\mathcal{V}}'$ as well assuming an appropriate compatibility condition for $g$ in addition.\qed
\end{remark}

\subsection{\label{sec:vwf_proofs}Proof of Theorem \ref{thm:vwf_wellposed}}
Recall the definition of the spaces $\mathcal{ Y}_s$ and $\mathcal{V}_s$, respectively, in \eqref{eq:Y_s} and \eqref{eq:V_s}. With a slight abuse of notation we introduce for $s\in[0,1]\setminus\{\frac12\}$ the bilinear form $a:\mathcal{ Y}_s\times\mathcal{V}_s\to\R$,
\begin{align*}
	a((\y,p),(v,q)):=\nu\langle\y,-\Delta v+\nabla q\rangle-\langle \nabla\cdot v,p\rangle.
\end{align*}
Note that for $s=1$ the bilinear form coincides with that defined in \eqref{eq:bilinearform}.
Obviously, it satisfies
\begin{equation}\label{eq:bilinearformcont}
\abs{a((\y,p),(v,q))}\le c \norm{(\nu\y,p)}_{\mathcal{ Y}_s}\norm{(v,q)}_{\mathcal{V}_s}
\end{equation}
for all $(\y,p)\times(v,q)\in\mathcal{ Y}_s\times\mathcal{V}_s$. Next we show the inf-sup conditions.
\begin{lemma}[inf-sup conditions] \label{lem:infsup}
  The bilinear form $a:\mathcal{ Y}_s\times\mathcal{V}_s$, $s\in
  [0,1]\setminus\{\frac12\}$, satisfies 
  \begin{align}\label{eq:supY}
    \sup_{(v,q)\in \mathcal{V}_s\setminus\{0\}}\frac{a((\y,p),(v,q))}{\norm{(v,q)}_{\mathcal{V}_s}}&\ge c 
    \norm{(\nu\y,p)}_{\mathcal{ Y}_s}\quad \forall (\y,p)\in\mathcal{ Y}_s,\\ \label{eq:supV}
    \sup_{(\y,p)\in \mathcal{ Y}_s\setminus\{0\}}\frac{a((\y,p),(v,q))}{\norm{(\nu\y,p)}_{\mathcal{ Y}_s}}&\ge c 
    \norm{(v,q)}_{\mathcal{V}_s}\quad \forall (v,q)\in\mathcal{V}_s.
  \end{align}
\end{lemma}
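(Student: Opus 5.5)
The second condition \eqref{eq:supV} comes directly from the structure of $\mathcal{V}_s$ as the domain of the Stokes operator. The first condition \eqref{eq:supY} is obtained by solving the homogeneous dual Stokes problem \eqref{eq:stokes_dual_neu_weak} with a suitable right hand side, using the duality between $\mathcal{Y}_s$ and $\mathcal{Y}_s'$. Throughout, I write $(\tilde\y,p)=(\nu\y,p)$, so that $a((\y,p),(v,q))=\langle\tilde\y,-\Delta v+\nabla q\rangle-\langle\nabla\cdot v,p\rangle$. This removes $\nu$ entirely.

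\emph{Proof of \eqref{eq:supV}.} Fix $(v,q)\in\mathcal{V}_s$ and set $F:=-\Delta v+\nabla q$ and $G:=\nabla\cdot v$. By definition of $\mathcal{V}_s$ we have $(F,-G)\in\mathcal{Y}_s'$, and $\mathcal{Y}_s$ is (by construction, \eqref{eq:Y_s}--\eqref{eq:V_s}) a space whose dual is $\mathcal{Y}_s'$. For $s\in[0,1)$ we have $H^{1-s}_0(\Omega)'=H^{s-1}(\Omega)$, and the pressure component is a dual space whose predual is reflexive. The norm of $(F,-G)$ in $\mathcal{Y}_s'$ is therefore attained, up to a constant, as a supremum over $(\tilde\y,p)\in\mathcal{Y}_s$:
\[
\sup_{(\tilde\y,p)\in\mathcal{Y}_s\setminus\{0\}}\frac{\langle\tilde\y,F\rangle-\langle G,p\rangle}{\norm{(\tilde\y,p)}_{\mathcal{Y}_s}}\ge c\norm{(F,G)}_{\mathcal{Y}_s'}.
\]
In the pressure component this uses reflexivity, i.e.\ $X''=X$ for the Hilbert space $X=H^s\cap L^2_0$ (resp.\ $H^1\cap L^2_0\cap V^{0,2}_{-1}$). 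Combining this with the norm equivalence \eqref{eq:normVs}, $\norm{(v,q)}_{\mathcal{V}_s}\sim\norm{(F,G)}_{\mathcal{Y}_s'}$, gives \eqref{eq:supV}.

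\emph{Proof of \eqref{eq:supY}.} Fix $(\tilde\y,p)\in\mathcal{Y}_s$. Again by duality, I choose $(f,g)\in\mathcal{Y}_s'$ with $\norm{(f,g)}_{\mathcal{Y}_s'}=1$ and $\langle\tilde\y,f\rangle+\langle g,p\rangle\ge\frac12\norm{(\tilde\y,p)}_{\mathcal{Y}_s}$. Note that $g$ lies in $L^2_0$, so the compatibility condition $(g,1)=0$ holds. I then let $(v,q)\in H^1_0(\Omega)^d\times L^2_0(\Omega)$ be the weak solution of \eqref{eq:stokes_dual_neu_weak} with data $(f,-g)$; it exists by \cite[Thm. 8.2.1]{BoffiBrezziFortin:13}. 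Since $(g,1)=0$, this solution satisfies $-\Delta v+\nabla q=f$ and $\nabla\cdot v=-g$ in the distributional sense, so $(v,q)\in\mathcal{V}_s$. By \eqref{eq:normVs} we get $\norm{(v,q)}_{\mathcal{V}_s}\le c\norm{(f,g)}_{\mathcal{Y}_s'}=c$. By \eqref{eq:dual_distri},
\[
a((\y,p),(v,q))=\langle\tilde\y,f\rangle+\langle g,p\rangle\ge\tfrac12\norm{(\nu\y,p)}_{\mathcal{Y}_s},
\]
and dividing by $\norm{(v,q)}_{\mathcal{V}_s}\le c$ gives \eqref{eq:supY}.

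The main obstacle is the bookkeeping of the dualities rather than any analysis. First, for $s\in(\frac12,1)$ one must use $(H^{1-s}_0)'=H^{s-1}$. For $s\in[0,\frac12)$ the velocity space $H^{1-s}_0$ differs from $H^{1-s}$, and the pairing $\langle\tilde\y,F\rangle$ must be read with $F\in H^{s-1}=(H^{1-s}_0)'$, consistent with \eqref{eq:Y_s}. Second, one must check that the pressure test functions $g$ lie in $L^2_0$, and for $s=1$ also in $V^{0,2}_{-1}$, so that the dual problem's solution really lies in $\mathcal{V}_s$ without the constant-mode defect. Notably, no regularity estimate \eqref{eq:apriori}, and hence no restriction $s<\xi$, is needed here: $\mathcal{V}_s$ is defined via the graph norm, so the inf-sup conditions hold for all $s\in[0,1]\setminus\{\frac12\}$. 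The restriction $s<\xi$ only enters later, when showing that the boundary functional $\langle\u,qn-\partial_nv\rangle_\Gamma$ is bounded on $\mathcal{V}_s$.
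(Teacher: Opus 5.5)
Your proof is correct and follows essentially the same route as the paper. For \eqref{eq:supY} you solve the dual Stokes problem with data from $\mathcal{Y}_s'$ and use \eqref{eq:normVs}, and for \eqref{eq:supV} you write $\norm{(-\Delta v+\nabla q,\nabla\cdot v)}_{\mathcal{Y}_s'}$ as a supremum over $\mathcal{Y}_s$ and invoke \eqref{eq:normVs} again; your explicit choice of data $(f,-g)$ (resp.\ $(F,-G)$) makes the sign bookkeeping slightly cleaner than the paper's chain of suprema.
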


\begin{proof}
  We start with the first condition. Let $(v,q)\in H^1_0(\Omega)^d\times L^2_0(\Omega)$ be the solution
  to \eqref{eq:stokes_dual_neu} with $(f,g)\in \mathcal{ Y}_s'$, see also \eqref{eq:dual_distri}. Then we obtain
  \begin{align*}
    \frac{\nu\langle f,\y\rangle + \langle -g, p\rangle}{\norm{(f,g)}_{\mathcal{ Y}_s'}}&=\frac{\nu\langle \y,-\Delta v +\nabla q\rangle-\langle \nabla \cdot v,p\rangle}{\norm{(-\Delta v +\nabla q,\nabla\cdot v)}_{\mathcal{ Y}_s'}}\le c \frac{a((\y,p),(v,q))}{\norm{(v,q)}_{\mathcal{V}_s}},
  \end{align*}
  where we used \eqref{eq:normVs} in the last step.
  Hence, there holds
  \begin{align*}
    \norm{(\nu\y,p)}_{\mathcal{ Y}_s}&=\sup_{(f,g)\in \mathcal{ Y}_s'\setminus\{0\}}\frac{\langle (\nu\y,p),(f,g) \rangle}{\norm{(f,g)}_{\mathcal{ Y}_s'}}=\sup_{(f,g)\in \mathcal{ Y}_s'\setminus\{0\}}\frac{\langle (\nu\y,p),(f,-g) \rangle}{\norm{(f,g)}_{\mathcal{ Y}_s'}}\\
    &=\sup_{(f,g)\in \mathcal{ Y}_s'\setminus\{0\}}\frac{\nu\langle f,\y\rangle + \langle -g, p\rangle}{\norm{(f,g)}_{\mathcal{ Y}_s'}}
    \le c \sup_{(v,q)\in \mathcal{V}_s\setminus\{0\}}\frac{a((\y,p),(v,q))}{\norm{(v,q)}_{\mathcal{V}_s}}.
  \end{align*}
  In case of the second condition, we get
  \begin{align*}
    \norm{(-\Delta v +\nabla q,\nabla\cdot v)}_{\mathcal{ Y}_s'}&=\sup_{(y,p)\in \mathcal{ Y}_s\setminus\{0\}}\frac{\langle (-\Delta v +\nabla q,\nabla\cdot v), (\nu\y,p) \rangle}{\norm{(\nu\y,p)}_{\mathcal{Y}_s}}\\
    &=\sup_{(\y,p)\in \mathcal{ Y}_s\setminus\{0\}}\frac{\nu\langle\y,-\Delta v +\nabla q\rangle + \langle \nabla\cdot v, p \rangle}{\norm{(\nu\y,p)}_{\mathcal{Y}_s}}\\
    &=\sup_{(\y,p)\in \mathcal{ Y}_s\setminus\{0\}}\frac{a((\y,p),(v,q))}{\norm{(\nu\y,p)}_{\mathcal{Y}_s}}.
  \end{align*}
  Hence, the second condition follows from \eqref{eq:normVs}.
\end{proof}
According to \eqref{eq:bilinearformcont} and the previous lemma (inf-sup conditions), we deduce by means of the Babu\v{s}ka--Lax--Milgram theorem, see
e.\,g.~\cite[Thm. 3.13]{John:16}, the
following result:
\begin{lemma}[general existence result]\label{lemma:BLM}
  Let $s\in[0,1]\setminus\{\frac12\}$. For any linear and bounded functional
  $F:\mathcal{V}_s\to\R$ there is a unique element $(\y,p)\in
  \mathcal{ Y}_s$ satisfying
  \begin{equation}\label{eq:veryweakF}
    a((\y,p),(v,q))=F(v,q) \quad \forall (v,q)\in\mathcal{V}_s.
  \end{equation}
  Moreover, there holds
  \[
  	\norm{(\nu\y,p)}_{\mathcal{ Y}_s}\le c \norm{F}_{\mathcal{V}_s'}.
  \]
\end{lemma}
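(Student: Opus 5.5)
This is a direct application of the Babu\v{s}ka--Lax--Milgram theorem (e.\,g.\ \cite[Thm. 3.13]{John:16}) to the bilinear form $a:\mathcal{Y}_s\times\mathcal{V}_s\to\R$. I take the trial space $\mathcal{Y}_s$ with the scaled norm $(\y,p)\mapsto\norm{(\nu\y,p)}_{\mathcal{Y}_s}$ and the test space $\mathcal{V}_s$ with the graph norm. Both are Hilbert spaces, since $\mathcal{Y}_s$ is built from Sobolev spaces and their duals, as in \eqref{eq:Y_s}.

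Completeness of $\mathcal{V}_s$ needs a short argument. By \eqref{eq:normVs} the graph norm is equivalent to $\norm{(-\Delta v+\nabla q,\nabla\cdot v)}_{\mathcal{Y}_s'}$. By the unique solvability of \eqref{eq:stokes_dual_neu_weak} for every $(f,g)\in\mathcal{Y}_s'\subset Y_0'\times Q$, the map $(v,q)\mapsto(-\Delta v+\nabla q,\nabla\cdot v)$ is a bijection from $\mathcal{V}_s$ onto $\mathcal{Y}_s'$. Together with the norm equivalence, it is an isomorphism onto a Hilbert space, so $\mathcal{V}_s$ is complete and carries an inner product inducing an equivalent norm.

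Next I check the hypotheses. Continuity of $a$ with respect to these norms is exactly \eqref{eq:bilinearformcont}. The two inf-sup conditions are \eqref{eq:supY} and \eqref{eq:supV} from Lemma~\ref{lem:infsup}. Condition \eqref{eq:supY} provides the stability estimate and uniqueness. Condition \eqref{eq:supV} gives the non-degeneracy in the test variable: if $a((\y,p),(v,q))=0$ for all $(\y,p)$, then $(v,q)=0$. This is what guarantees existence for every $F\in\mathcal{V}_s'$.

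Babu\v{s}ka--Lax--Milgram then yields a unique $(\y,p)\in\mathcal{Y}_s$ solving \eqref{eq:veryweakF}. It also gives the bound $\norm{(\nu\y,p)}_{\mathcal{Y}_s}\le\gamma^{-1}\norm{F}_{\mathcal{V}_s'}$, where $\gamma$ is the constant in \eqref{eq:supY}.

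The only point that needs care is the completeness of $\mathcal{V}_s$ handled above; everything else follows immediately from the lemmas already stated. For $s=1$ the same argument applies with the $V^{0,2}_{-1}$-augmented spaces.
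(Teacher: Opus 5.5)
Your proposal is correct and follows the same route as the paper. The paper obtains the lemma in one line by applying the Babu\v{s}ka--Lax--Milgram theorem to the continuity estimate \eqref{eq:bilinearformcont} and the two inf-sup conditions of Lemma~\ref{lem:infsup}. Your additional check that $\mathcal{V}_s$ is complete, using the isomorphism onto $\mathcal{Y}_s'$ given by \eqref{eq:normVs} and the unique weak solvability of the dual Stokes problem, is a sound detail that the paper leaves implicit.
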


\begin{remark}[solutions coincide for different $s$]\label{rem:allthesame}
  Note that $\mathcal{V}_1\hookrightarrow \mathcal{V}_s$ for $s\le 1$.
  Hence, the solution $(\y,p)\in\mathcal{ Y}_s$ with $s\in[0,1]\setminus\{\frac12\}$ from
  Lemma \ref{lemma:BLM} solves the variational equation
  \begin{align}\label{eq:veryweakx}
    a((\y,p),(v,q))=F(v,q) \quad \forall (v,q)\in\mathcal{V}_1=\mathcal{V}.
  \end{align}
  Since the solution is unique, the solutions from $\mathcal{ Y}_1$ and $\mathcal{ Y}_s$ coincide. \qed
\end{remark}

We now conclude with two corollaries, which form the basis for the main result in Theorem~\ref{thm:vwf_wellposed}. We distinguish between $s>\frac12$ and $s<\frac12$. The case $s=\frac12$ is then treated by an interpolation argument.

\begin{corollary}[$s>\frac12$]\label{cor:BLM1}
  Let $\u\in H^{\frac12-s}(\Gamma)^d$, $s\in [0,\xi)\cap (\frac12,1)$ with $\xi\in\R$ from Subsection \ref{sec:regularity}.
  Then there is a unique solution $(y,p)\in H^{1-s}(\Omega)^d\times (H^{s}(\Omega)\cap L_0^2(\Omega))'$ of the very weak formulation \eqref{eq:veryweak1} satisfying
  \[
 	 \nu\norm{\y}_{H^{1-s}(\Omega)^d}+\norm{p}_{\left(H^{s}(\Omega)\cap L^2_0(\Omega)\right)'}\le c \nu\norm{\u}_{H^{\frac12-s}(\Gamma)^d}.
  \]
\end{corollary}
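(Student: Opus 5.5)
The plan is to apply Lemma~\ref{lemma:BLM} with the functional
\[
F(v,q):=\nu\langle \u,qn-\partial_n v\rangle_\Gamma ,\qquad (v,q)\in\mathcal{V}_s ,
\]
and then identify the resulting solution with the solution of \eqref{eq:veryweak1} via Remark~\ref{rem:allthesame}. Since $s\in(\frac12,1)$ and $s\ne\frac12$, the space $\mathcal{Y}_s=H^{1-s}_0(\Omega)^d\times(H^s(\Omega)\cap L^2_0(\Omega))'$ is available. Moreover, $1-s\in(0,\frac12)$, so $H^{1-s}_0(\Omega)=H^{1-s}(\Omega)$ with equivalent norms. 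The norm in \eqref{eq:normYs} is therefore exactly the one appearing in the asserted estimate. The whole task thus reduces to showing $F\in\mathcal{V}_s'$ with
\[
\norm{F}_{\mathcal{V}_s'}\le c\nu\norm{\u}_{H^{\frac12-s}(\Gamma)^d}.
\]

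For the boundedness of $F$, the key input is the regularity \eqref{eq:apriori}. It is valid precisely because $s<\xi$, and it gives $v\in H^{1+s}(\Omega)^d$ and $q\in H^s(\Omega)$ with
\[
\norm{v}_{H^{1+s}(\Omega)^d}+\norm{q}_{H^s(\Omega)}\le c\norm{(v,q)}_{\mathcal{V}_s}.
\]
By the trace theorem, $\nabla v|_\Gamma$ and $q|_\Gamma$ lie in $H^{s-\frac12}(\Gamma)$, since $s-\frac12\in(0,\frac12)$. Multiplying by the piecewise constant normal $n$ preserves $H^{s-\frac12}(\Gamma)$ because $s-\frac12<\frac12$. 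This is the same argument used for \eqref{eq:regvqbdry}, and it is exactly why $s<1$ is needed there. Hence
\[
\norm{qn-\partial_n v}_{H^{s-\frac12}(\Gamma)^d}\le c\norm{(v,q)}_{\mathcal{V}_s}.
\]
Combining this with the duality pairing between $H^{\frac12-s}(\Gamma)$ and $H^{s-\frac12}(\Gamma)$ yields
\[
\abs{F(v,q)}\le c\nu\norm{\u}_{H^{\frac12-s}(\Gamma)^d}\norm{(v,q)}_{\mathcal{V}_s}.
\]

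Lemma~\ref{lemma:BLM} then provides a unique $(\y,p)\in\mathcal{Y}_s$ solving \eqref{eq:veryweakF}, together with the bound $\norm{(\nu\y,p)}_{\mathcal{Y}_s}\le c\nu\norm{\u}_{H^{\frac12-s}(\Gamma)^d}$. Because $\mathcal{V}=\mathcal{V}_1\hookrightarrow\mathcal{V}_s$, Remark~\ref{rem:allthesame} shows that this pair solves \eqref{eq:veryweak1}. For uniqueness within $H^{1-s}(\Omega)^d\times(H^s(\Omega)\cap L^2_0(\Omega))'$, I would argue as follows. Any such solution lies in $\mathcal{Y}_s\hookrightarrow\mathcal{Y}_1$, and \eqref{eq:supY} holds for $s=1$, so the difference of two solutions vanishes when tested against $\mathcal{V}_1$ and is therefore zero.

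I expect the main obstacle to be the trace step for $qn-\partial_n v$. One has to check that the multiplication by the discontinuous normal is bounded on $H^{s-\frac12}(\Gamma)$. On polygonal or polyhedral boundaries this is only clear facet-wise, together with the fact that $H^{\sigma}(\Gamma)=\prod_j H^\sigma(\Gamma_j)$, with equivalent norms, for $\sigma<\frac12$. I would invoke this localization to pieces, citing \cite{Ding:96} for the traces.
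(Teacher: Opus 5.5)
Your proposal is correct and follows the paper's proof essentially step by step. It uses the same functional $F$, the bound $\norm{qn-\partial_n v}_{H^{s-\frac12}(\Gamma)^d}\le c\norm{(v,q)}_{\mathcal{V}_s}$ from the trace theorem and \eqref{eq:apriori}, then Lemma~\ref{lemma:BLM} with Remark~\ref{rem:allthesame} and $H^{1-s}_0(\Omega)^d=H^{1-s}(\Omega)^d$; your uniqueness step via $\mathcal{Y}_s\hookrightarrow\mathcal{Y}_1$ is the reasoning behind Remark~\ref{rem:allthesame}, and its injectivity in the pressure component tacitly uses density of $H^1(\Omega)\cap L^2_0(\Omega)\cap V^{0,2}_{-1}(\Omega)$ in $H^s(\Omega)\cap L^2_0(\Omega)$, which holds for $s<1$ and is equally implicit in the paper.
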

\begin{proof}
  The functional $F$,
  \[
  	F(v,q):= \nu\langle \u,qn-\partial_n v\rangle_\Gamma,
  \]
  is obviously linear.  Let us discuss its boundedness in
  $\mathcal{V}_s'$. As $s-\frac12>0$, we obtain by means of the
  standard trace theorem  and the a priori
  estimate \eqref{eq:apriori} for $(v,q)$
  \begin{align}\label{eq:stern}
    \norm{qn-\partial_n v}_{H^{s-\frac12}(\Gamma)^d}\le
    c\left(\norm{q}_{H^s(\Omega)}+\norm{v}_{H^{1+s}(\Omega)^d}\right)\le
    c\norm{(v,q)}_{\mathcal{V}_s}.
  \end{align}
  Hence, there holds
  \[
  	\norm{F}_{\mathcal{V}_s'}\le c \nu\norm{\u}_{H^{\frac12-s}(\Gamma)^d}.
  \]
The assertion follows from Lemma \ref{lemma:BLM} and Remark \ref{rem:allthesame} where we use that $H^{1-s}_0(\Omega)^d\linebreak[3]=H^{1-s}(\Omega)^d$ for $s>\frac12$.
\end{proof}

\begin{remark}[reason to exclude $s=1$]
    The normal vector $n$ is piecewise constant and hence not in $H^{\frac12}(\Gamma)^d$. Therefore we exclude the case $s=1$ in \eqref{eq:stern} and consequently in Corollary \ref{cor:BLM1}. \qed
\end{remark}

 For $s < \frac{1}{2}$, we have  $qn - \partial_nv  \in H^{s - \frac{1}{2}}(\Gamma)^d$, which is a negative-order Sobolev space. In this case, we can not apply the trace theorem since the trace operator is only a linear and bounded operator from $H^s(\Omega)$ to $H^{s-\frac{1}{2}}(\Gamma)$ for $s \in (\frac{1}{2},\frac{3}{2})$. Furthermore, the solution $\y$ would have to lie in $H^{1-s}_0(\Omega)^d$, as this is the appropriate dual space. Since $1-s > \frac{1}{2}$, traces are well defined, and in particular this would imply $\y = 0 $ on $\Gamma$. However, this contradicts the actual boundary condition $ \y = \u$ on $\Gamma$. To resolve this issue, we now apply a lifting argument and split $ \y = \y_0 + E\u $, $\y_0 \in H^{1-s}_0(\Omega)^d$, where $E$ is the extension operator 
\begin{align}\label{eq:E}
  E\colon H^{\frac12-s}(\Gamma)^d \to H^{1-s}(\Omega)^d,
\end{align}  see e.\,g.\ \cite{Ding:96}. Accordingly, for $\u\in H^{\frac12-s}(\Gamma)^d$ with $s<\frac12$, instead of considering the very weak formulation \eqref{eq:veryweak1} directly, we seek for $(\y_0,p)\in \mathcal{ Y}$ satisfying
\begin{align}
  a((\y_0,p),(v,q)) &= \nu\langle \u,\, qn - \partial_n v \rangle_\Gamma
                       - a((E\u,0),(v,q))  \nonumber \\
  \label{eq:veryweakhom1}
  &= \nu\langle \u,\, qn - \partial_n v \rangle_\Gamma
     - \nu\langle E\u,\,-\Delta v + \nabla q \rangle \quad\forall (v,q)\in\mathcal{V}_1=\mathcal{V}.
\end{align}
The solution to the very weak formulation \eqref{eq:veryweak1} is then given by $(y,p)=(y_0+Eu,p)\in \mathcal{ Y}$.
\begin{lemma}[reformulation of the boundary term]\label{lem:tildeF=F-fast}
  Let $u\in H^{\frac12-s}(\Gamma)^d$, $s\in [0,\frac12)$, and  $(v,q)\in \mathcal{V}_1$. There holds
  \[
    \langle \u,qn-\partial_n v\rangle_\Gamma = \langle\nabla E\u,qI-\nabla v\rangle+ \langle E\u,-\Delta v +\nabla q\rangle.
  \]
\end{lemma}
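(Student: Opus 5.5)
The identity is Green's formula for the Stokes stress, applied to the extension $E\u\in H^{1-s}(\Omega)^d$. I would first prove it for smooth data by a density argument and then pass to the limit. The key point is that for $s<\frac12$ both sides are continuous in $\u$ with respect to the $H^{\frac12-s}(\Gamma)^d$ topology, provided the pairings are read correctly.

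\textbf{Step 1: smooth boundary data.} For $\u\in H^{\frac12}(\Gamma)^d$, and for $(v,q)\in\mathcal V_1$ regular enough, say $v\in H^{1+\sigma}(\Omega)^d$ and $q\in H^\sigma(\Omega)$ with some $\sigma>\frac12$, I would apply the divergence theorem to the tensor field $qI-\nabla v$ tested with $E\u\in H^1(\Omega)^d$:
\[
(\nabla E\u,qI-\nabla v)+(E\u,\div(qI-\nabla v))=\langle E\u|_\Gamma,(qI-\nabla v)n\rangle_\Gamma .
\]
Since $\div(qI-\nabla v)=-\Delta v+\nabla q\in L^2(\Omega)^d$, $(qI-\nabla v)n=qn-\partial_n v$ and $E\u|_\Gamma=\u$, this is exactly the claim. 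The divergence theorem itself is justified by approximating $E\u$ by smooth functions in $H^1$, using the normal trace of an $H(\div)$ field in $H^{-\frac12}(\Gamma)$.

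\textbf{Step 2: the regularity of $(v,q)$ available.} I need to know how regular $(v,q)\in\mathcal V_1$ is. By \eqref{eq:apriori}, applied with some $s_0\in(\frac12,\xi)\cap(\frac12,1)$ (recall $\xi>\frac12$ always), and using $\mathcal V_1\hookrightarrow\mathcal V_{s_0}$, we get $v\in H^{1+s_0}(\Omega)^d$ and $q\in H^{s_0}(\Omega)$. Hence $qI-\nabla v\in H^{s_0}(\Omega)^{d\times d}$, and the trace $qn-\partial_nv$ lies in $H^{s_0-\frac12}(\Gamma)^d$; note that $n$ is piecewise constant, so this holds facewise with a global bound for $s_0-\frac12<\frac12$. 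In particular Step 1 applies to every $(v,q)\in\mathcal V_1$.

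\textbf{Step 3: extension by density.} Take $\u_\ell\in H^{\frac12}(\Gamma)^d$ with $\u_\ell\to\u$ in $H^{\frac12-s}(\Gamma)^d$; by continuity of $E$ in \eqref{eq:E}, $E\u_\ell\to E\u$ in $H^{1-s}(\Omega)^d$. I would interpret the pairings as $\langle\nabla E\u,qI-\nabla v\rangle$ in the duality $H^{-s}\times H^{s}$ and $\langle\u,qn-\partial_nv\rangle_\Gamma$ in $H^{\frac12-s}(\Gamma)\times H^{s-\frac12}(\Gamma)$. For the volume term I would choose $s_0\ge s$, which is possible since $s<\frac12<s_0$, and then use $H^{s_0}\hookrightarrow H^s=H^s_0$ for $s<\frac12$, so that $\nabla:H^{1-s}\to H^{-s}=(H^s_0)'$ is continuous. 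The boundary term converges because $qn-\partial_nv\in H^{s_0-\frac12}\subset H^{s-\frac12}$. The term $(E\u_\ell,-\Delta v+\nabla q)$ converges since $L^2$ is paired with $H^{1-s}\subset L^2$. Passing to the limit in the identity for $\u_\ell$ gives the claim.

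The main obstacle is making the volume pairing $\langle\nabla E\u,qI-\nabla v\rangle$ rigorous. It requires identifying $(H^s(\Omega))'$ with $H^{-s}(\Omega)$, which holds precisely because $s<\frac12$, so that $C_0^\infty$ is dense in $H^s(\Omega)$. It also requires checking that the distributional gradient maps $H^{1-s}$ continuously into this dual; this is where the restriction $s\in[0,\frac12)$ enters.
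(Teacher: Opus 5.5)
Your proposal is correct and follows the paper's route. Both prove the Green formula for $\u_\ell\in H^{\frac12}(\Gamma)^d$ by integration by parts. Both then pass to the limit $\u_\ell\to\u$ in $H^{\frac12-s}(\Gamma)^d$, using the continuity of $E$, the bound $\norm{\nabla E(\u-\u_\ell)}_{H^{-s}(\Omega)^{d\times d}}\le c\norm{\u-\u_\ell}_{H^{\frac12-s}(\Gamma)^d}$, and the regularity \eqref{eq:apriori} of $(v,q)\in\mathcal{V}_1$.

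Your auxiliary $s_0\in(\frac12,\xi)\cap(\frac12,1)$ gives $qn-\partial_n v\in H^{s_0-\frac12}(\Gamma)^d\subset H^{s-\frac12}(\Gamma)^d$. This makes explicit why the boundary pairing is defined and converges. The paper leaves this implicit, since it only invokes $\mathcal{V}_1\hookrightarrow\mathcal{V}_s$ with $s<\frac12$, which covers the volume term.
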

\begin{proof}
  Let $\u_\ell\in H^{\frac12}(\Gamma)^d$ be a
  sequence which converges to $\u$ in $H^{\frac12-s}(\Gamma)^d$  for $\ell$ tending to infinity.  Then for
  $(v,q)\in \mathcal{V}_1$ there holds
  \[
    (E\u_\ell,-\Delta v +\nabla q)=\langle\nabla E\u_\ell,\nabla v-qI\rangle+
    \langle \u_\ell,qn-\partial_n v\rangle_\Gamma
  \]
  which is a direct consequence of the integration by parts formula.
  Hence we get for $(v,q)\in \mathcal{V}_1$
  \begin{align*}
    \langle\nabla &E\u,qI-\nabla v\rangle+ \langle E\u,-\Delta v +\nabla q\rangle
\\&=\langle\nabla E(\u-\u_\ell),qI-\nabla v\rangle+ 
    ( E(\u-\u_\ell),-\Delta v +\nabla q)+ 
    \langle \u_\ell-\u,qn-\partial_n v\rangle_\Gamma\\&\quad+
    \langle \u,qn-\partial_n v\rangle_\Gamma.
  \end{align*}
  We notice that
  \[
    \norm{\nabla E(\u-\u_\ell)}_{H^{-s}(\Omega)^{d\times d}}\le c 
    \norm{ E(\u-\u_\ell)}_{H^{1-s}(\Omega)^d}\le c 
    \norm{\u-\u_\ell}_{H^{\frac12-s}(\Gamma)^d},
  \]
  where the latter term converges to zero for $\ell$ tending to infinity.
  Thus, by using \eqref{eq:apriori} and $\mathcal{V}_1\hookrightarrow\mathcal{V}_s$, we obtain the assertion.
\end{proof}

As a consequence of Lemma \ref{lem:tildeF=F-fast}, the very weak formulation \eqref{eq:veryweakhom1} for $(y_0,p)\in\mathcal{ Y}$ can be further reformulated as 
\begin{align}\label{eq:veryweakhom}
    a((y_0,p),(v,q)) =  \nu\langle \nabla E u, qI - \nabla v \rangle \quad\forall (v,q)\in\mathcal{V}_1=\mathcal{V}.
\end{align}

We now prove that problem \eqref{eq:veryweakhom} admits a unique solution $(\y_0, p) \in \mathcal{Y}_s$ with $s<\frac12$.
\begin{lemma}[existence result for problem \eqref{eq:veryweakhom}]\label{lem:veryweakhom}
   Let $\u\in H^{\frac12-s}(\Gamma)^d$, $s\in [0,\frac12)$.
  Then there exists a unique element $(\y_0, p) \in H^{1-s}_0(\Omega)^d\times (H^{s}(\Omega)\cap L_0^2(\Omega))'$ of problem
  \eqref{eq:veryweakhom} satisfying
  \[
    \nu\|\y_0\|_{H^{1-s}(\Omega)^d} + \|p\|_{\left(H^{s}(\Omega)\cap L^2_0(\Omega)\right)'} \leq  c \nu\| \u \|_{H^{\frac{1}{2} -s}(\Gamma)^d}.
  \]
\end{lemma}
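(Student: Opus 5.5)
We apply the general existence result, Lemma~\ref{lemma:BLM}, with the parameter $s\in[0,\frac12)$ and the functional
\[
  F(v,q):=\nu\langle \nabla E\u,\,qI-\nabla v\rangle ,\qquad (v,q)\in\mathcal{V}_s .
\]
The first step is to show that $F$ is bounded on $\mathcal{V}_s$. By continuity of the extension operator \eqref{eq:E}, $E\u\in H^{1-s}(\Omega)^d$ with $\norm{E\u}_{H^{1-s}(\Omega)^d}\le c\norm{\u}_{H^{\frac12-s}(\Gamma)^d}$. Hence
\[
  \norm{\nabla E\u}_{H^{-s}(\Omega)^{d\times d}}\le c\norm{\u}_{H^{\frac12-s}(\Gamma)^d}.
\]
Since $s<\frac12$, we have $H^s(\Omega)=H^s_0(\Omega)$, so $H^{-s}(\Omega)$ is the dual of $H^s(\Omega)$. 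The pairing is therefore controlled by $\norm{qI-\nabla v}_{H^s(\Omega)^{d\times d}}\le c(\norm{q}_{H^s(\Omega)}+\norm{v}_{H^{1+s}(\Omega)^d})$. By the regularity estimate \eqref{eq:apriori}, which requires $s<\xi$ and holds automatically since $\xi>\frac12>s$, this is bounded by $c\norm{(v,q)}_{\mathcal{V}_s}$. Altogether,
\[
  \norm{F}_{\mathcal{V}_s'}\le c\nu\norm{\u}_{H^{\frac12-s}(\Gamma)^d}.
\]

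Lemma~\ref{lemma:BLM} then gives a unique $(\y_0,p)\in\mathcal{Y}_s=H^{1-s}_0(\Omega)^d\times(H^s(\Omega)\cap L^2_0(\Omega))'$ with $a((\y_0,p),(v,q))=F(v,q)$ for all $(v,q)\in\mathcal{V}_s$. It also yields the stability estimate $\norm{(\nu\y_0,p)}_{\mathcal{Y}_s}\le c\nu\norm{\u}_{H^{\frac12-s}(\Gamma)^d}$, which is the asserted bound by the definition \eqref{eq:normYs} of the norm.

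It remains to pass from test space $\mathcal{V}_s$ to $\mathcal{V}_1=\mathcal{V}$, as required in \eqref{eq:veryweakhom}. As in Remark~\ref{rem:allthesame}, $\mathcal{V}_1\hookrightarrow\mathcal{V}_s$, so the solution satisfies \eqref{eq:veryweakhom} for all test functions in $\mathcal{V}$. For uniqueness in $\mathcal{Y}_s$ among solutions tested only on $\mathcal{V}_1$, I would argue that $\mathcal{V}_1$ is dense in $\mathcal{V}_s$ with respect to the graph norm. This follows because the map $(v,q)\mapsto(-\Delta v+\nabla q,\nabla\cdot v)$ is an isomorphism from $\mathcal{V}_s$ onto $\mathcal{Y}_s'$ by \eqref{eq:normVs}, and $\mathcal{Y}_1'$ is dense in $\mathcal{Y}_s'$. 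The latter is the point needing care, since $L^2\cap\ldots$ must be dense in $H^{s-1}(\Omega)^d\times(H^s\cap L^2_0)$; in particular, $H^1\cap L^2_0\cap V^{0,2}_{-1}$ must be dense in $H^s\cap L^2_0$. For this I would use that smooth mean-zero functions vanishing near the vertices or edges are dense in $H^s$ for $s<\frac12$. With density, a solution for $\mathcal{V}_1$ is a solution for $\mathcal{V}_s$ by continuity of both sides, and uniqueness follows from Lemma~\ref{lemma:BLM}.

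The main obstacle is making the bound on $F$ rigorous in the negative-order duality for $s<\frac12$, namely identifying $(H^s)'$ with $H^{-s}$ and justifying the estimate $\norm{\nabla w}_{H^{-s}}\le c\norm{w}_{H^{1-s}}$. I would handle both by interpolating between the cases $s=0$ and $s=1$, together with the density argument above.
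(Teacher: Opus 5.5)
Your proposal is correct and follows the paper's proof essentially step for step. You use the same functional $F(v,q)=\nu\langle\nabla E\u,qI-\nabla v\rangle$, bound it on $\mathcal{V}_s$ via \eqref{eq:apriori} and the continuity of $E$, and then apply Lemma~\ref{lemma:BLM} together with the embedding $\mathcal{V}_1\hookrightarrow\mathcal{V}_s$. Your additional density step is a genuine improvement: it uses the isomorphism $\mathcal{V}_s\cong\mathcal{Y}_s'$ and the fact that mean-zero $C_0^\infty(\Omega)$ functions are dense in $H^s(\Omega)\cap L^2_0(\Omega)$ for $s<\frac12$. The paper only obtains uniqueness for the problem tested on $\mathcal{V}_s$ and never explains why it persists when testing only with $\mathcal{V}_1=\mathcal{V}$, as the statement claims; your argument closes that point.
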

\begin{proof}
The functional $F$,
\[
    F(v,q):=\nu\langle \nabla E u, qI - \nabla v \rangle,
\]
is linear. Using \eqref{eq:apriori} we have 
\[
    \abs{F(v,q)}\le \nu\norm{\nabla E\u}_{H^{-s}(\Omega)^{d\times d}}
    \norm{qI-\nabla v}_{H^{s}(\Omega)^{d\times d}} \le c 
    \nu\norm{\nabla E\u}_{H^{-s}(\Omega)^{d\times d}} \norm{(v,q)}_{\mathcal{V}_s}.
\]
Moreover, there holds
\[
    \norm{\nabla E\u}_{H^{-s}(\Omega)^{d\times d}}\le c 
    \norm{ E\u}_{H^{1-s}(\Omega)^d}\le c 
    \norm{\u}_{H^{\frac12-s}(\Gamma)^d}.
\]
Consequently, we obtain
\begin{equation}\label{eq:aprioriy_0}
    \norm{F}_{\mathcal{V}_s'}\le c \nu\norm{\u}_{H^{\frac12-s}(\Gamma)^d}.
\end{equation}
By applying Lemma \ref{lemma:BLM}, there is an unique element $(\y_0,p)\in \mathcal{ Y}_s$ satisfying
\[
a((y_0,p),(v,q)) =  \nu\langle \nabla E u, qI - \nabla v \rangle \quad\forall (v,q)\in\mathcal{V}_s.
\]
As $\mathcal{V}_1\hookrightarrow \mathcal{V}_s$, it also satifies \eqref{eq:veryweakhom}. Using \eqref{eq:aprioriy_0} the a apriori estimate is proven by Lemma \ref{lemma:BLM} as well.
\end{proof}

Using the continuity of the extension operator $E$,
\[ \| E\u \|_{H^{1-s}(\Omega)^d} \leq c \|\u\|_{H^{\frac{1}{2}-s}(\Gamma)^d},\]
with $s\in[0,\frac12)$, and the considerations from above we directly conclude the following result.
\begin{corollary}[$s<\frac12$]\label{cor:BLM2}
Let $\u\in H^{\frac12-s}(\Gamma)^d$, $s\in[0,\frac12)$. Then there is a unique solution $(\y,p)\in
  H^{1-s}(\Omega)^d\times \left(H^{s}(\Omega)\cap L^2_0(\Omega)\right)'$ of the very weak formulation \eqref{eq:veryweak1} satisfying
  \[
    \nu\norm{\y}_{H^{1-s}(\Omega)^d}+\norm{p}_{\left(H^{s}(\Omega)\cap L^2_0(\Omega)\right)'}\le c \nu\norm{\u}_{H^{\frac12-s}(\Gamma)^d}.
  \]
\end{corollary}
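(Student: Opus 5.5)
The plan is to assemble Corollary~\ref{cor:BLM2} directly from Lemma~\ref{lem:veryweakhom}, the splitting $\y=\y_0+E\u$, and Lemma~\ref{lem:tildeF=F-fast}.

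\textbf{Existence.} Given $\u\in H^{\frac12-s}(\Gamma)^d$ with $s\in[0,\frac12)$, Lemma~\ref{lem:veryweakhom} provides $(\y_0,p)\in H^{1-s}_0(\Omega)^d\times (H^s(\Omega)\cap L^2_0(\Omega))'$ solving \eqref{eq:veryweakhom}. Set $\y:=\y_0+E\u\in H^{1-s}(\Omega)^d$, with $E$ the extension operator \eqref{eq:E}. For $(v,q)\in\mathcal{V}$, bilinearity gives
\[
a((\y,p),(v,q))=a((\y_0,p),(v,q))+\nu\langle E\u,-\Delta v+\nabla q\rangle
=\nu\langle\nabla E\u,qI-\nabla v\rangle+\nu\langle E\u,-\Delta v+\nabla q\rangle .
\]
By Lemma~\ref{lem:tildeF=F-fast}, the right-hand side equals $\nu\langle\u,qn-\partial_nv\rangle_\Gamma$. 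Hence \eqref{eq:veryweak1} holds. One has to make sure that the pairing $\langle E\u,-\Delta v+\nabla q\rangle$ is well defined; this is clear because $-\Delta v+\nabla q\in L^2(\Omega)^d$ for $(v,q)\in\mathcal{V}_1$.

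\textbf{Stability.} The triangle inequality, the a priori bound of Lemma~\ref{lem:veryweakhom}, and the continuity of $E$ give
\[
\nu\norm{\y}_{H^{1-s}}+\norm{p}_{(H^s\cap L^2_0)'}\le \nu\norm{\y_0}_{H^{1-s}}+\nu\norm{E\u}_{H^{1-s}}+\norm{p}_{(H^s\cap L^2_0)'}\le c\nu\norm{\u}_{H^{\frac12-s}(\Gamma)^d}.
\]

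\textbf{Uniqueness.} This is the step requiring the most care, since $\y$ need not lie in $H^{1-s}_0(\Omega)^d$, so Lemma~\ref{lemma:BLM} cannot be applied to it directly. Suppose $(\y,p)$ and $(\y',p')$ both lie in $H^{1-s}(\Omega)^d\times(H^s\cap L^2_0)'$ and solve \eqref{eq:veryweak1}. Their difference $(w,r)$ then satisfies $a((w,r),(v,q))=0$ for all $(v,q)\in\mathcal{V}_1$.

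Because $H^{1-s}(\Omega)^d\hookrightarrow L^2(\Omega)^d$ and $(H^s\cap L^2_0)'\hookrightarrow(H^1\cap L^2_0\cap V^{0,2}_{-1})'$, the difference $(w,r)$ belongs to $\mathcal{Y}_1$. The inf-sup condition \eqref{eq:supY} with $s=1$ (equivalently, uniqueness in Lemma~\ref{lemma:BLM} for $s=1$) then forces $(w,r)=0$. I will check that these embeddings hold; the second follows from the continuous, dense embedding $H^1\cap L^2_0\cap V^{0,2}_{-1}\hookrightarrow H^s\cap L^2_0$ by duality, using the density of $C_0^\infty$ functions with zero mean.
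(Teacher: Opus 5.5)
Your proposal is correct and follows the paper's route. The paper obtains Corollary~\ref{cor:BLM2} exactly by combining Lemma~\ref{lem:veryweakhom}, the splitting $\y=\y_0+E\u$, Lemma~\ref{lem:tildeF=F-fast} and the continuity of $E$. Your explicit uniqueness step goes through the inf-sup condition \eqref{eq:supY} with $s=1$, together with the density of $H^1(\Omega)\cap L^2_0(\Omega)\cap V^{0,2}_{-1}(\Omega)$ in $H^s(\Omega)\cap L^2_0(\Omega)$ for $s<\frac12$; this only spells out what the paper leaves implicit through uniqueness in $\mathcal{Y}_1$, cf.\ Remark~\ref{rem:allthesame}.
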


To summarize so far, we have treated the cases $s\in [0,\xi)\cap (\frac12,1]$ and $s\in[0,\frac12)$ in Corollaries~\ref{cor:BLM1} and \ref{cor:BLM2}, respectively. It remains to consider the case $s=\frac12$. Unfortunately, this cannot not be done by the techniques from above as this is an exceptional case. Here we can apply the results for $s=\frac12-\varepsilon\in[0,\frac12)$ and for $s=\frac12+\varepsilon \in [0,\xi)\cap (\frac12,1]$ with $\varepsilon>0$ in combination with techniques from real interpolation in Banach spaces, see e.\,g.\ \cite[Chapter 14]{BrennerScott:08}, such that the whole range for $s\in[0,\xi)\cap [0,1]$ is now treated. 

To complete the proof of Theorem \ref{thm:vwf_wellposed}, we need to know that we can replace $p\in (H^s(\Omega)\cap L^2_0(\Omega))'$ with $p \in \{v\in H^s(\Omega)'\colon \langle v,1\rangle =0\}$, using the corresponding norms. This is shown in the following lemma.

\begin{lemma}[identification of dual spaces]\label{rem:equiHs}
    For each $p\in (H^s(\Omega)\cap L^2_0(\Omega))'$, $s\ge0$, there is exactly one $\tilde p \in \{v\in H^s(\Omega)'\colon \langle v,1\rangle =0\}$ satisfying
    \[
        \langle \tilde p ,\varphi\rangle = \langle p, \varphi\rangle \quad \forall \varphi \in H^s(\Omega)\cap L^2_0(\Omega)
    \]
    and
    \begin{equation}\label{eq:equiHs}
        \norm{p}_{\left(H^{s}(\Omega)\cap L^2_0(\Omega)\right)'} = \norm{\tilde p}_{H^{s}(\Omega)'}.
    \end{equation}
\end{lemma}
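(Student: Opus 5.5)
We construct $\tilde p$ explicitly through the mean-value projection $P\varphi:=\varphi-\abs{\Omega}^{-1}(\varphi,1)$, which maps $H^s(\Omega)$ into $H^s(\Omega)\cap L^2_0(\Omega)$. For a given $p\in (H^s(\Omega)\cap L^2_0(\Omega))'$ we define
\[
  \langle \tilde p,\varphi\rangle := \langle p, P\varphi\rangle \quad\forall \varphi\in H^s(\Omega).
\]
This functional is linear. Since constants lie in $H^s(\Omega)$ and $P1=0$, we get $\langle\tilde p,1\rangle=0$. If $\varphi$ has mean zero, then $P\varphi=\varphi$, so $\tilde p$ coincides with $p$ on $H^s(\Omega)\cap L^2_0(\Omega)$. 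Boundedness follows from \eqref{eq:varphimean}, namely $\norm{P\varphi}_{H^s(\Omega)}\le\norm{\varphi}_{H^s(\Omega)}$. That estimate was stated for $s\in[0,1]$, using that the $H^s$-seminorm does not see constants. For general $s\ge0$ the same argument works with the Slobodeckij or integer-order seminorm, since every derivative or difference quotient annihilates constants and the $L^2$ part only decreases under the orthogonal projection $P$. We therefore get $\abs{\langle\tilde p,\varphi\rangle}\le \norm{p}_{(H^s\cap L^2_0)'}\norm{\varphi}_{H^s(\Omega)}$, that is, $\norm{\tilde p}_{H^s(\Omega)'}\le\norm{p}_{(H^s\cap L^2_0)'}$.

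For the reverse inequality we restrict the supremum defining $\norm{\tilde p}_{H^s(\Omega)'}$ to $\varphi\in H^s(\Omega)\cap L^2_0(\Omega)$. On this subspace $\tilde p=p$, and it carries the same $H^s$-norm, so $\norm{\tilde p}_{H^s(\Omega)'}\ge\norm{p}_{(H^s\cap L^2_0)'}$. Together with the first paragraph this gives \eqref{eq:equiHs}.

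For uniqueness, let $\hat p$ be a second functional with both properties, and set $w=\tilde p-\hat p\in H^s(\Omega)'$. Then $w$ vanishes on $H^s(\Omega)\cap L^2_0(\Omega)$ and $\langle w,1\rangle=0$. Since every $\varphi\in H^s(\Omega)$ splits as $\varphi=P\varphi+\abs{\Omega}^{-1}(\varphi,1)$, we get $\langle w,\varphi\rangle=0$ for all $\varphi$, hence $w=0$.

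The only point that needs care is the norm inequality $\norm{P\varphi}_{H^s}\le\norm{\varphi}_{H^s}$ for the chosen norm when $s$ is not in $[0,1]$. If the norm convention causes trouble there, a bound with a constant still gives an isomorphism of the two spaces. The exact equality \eqref{eq:equiHs}, however, needs the constant to be $1$, so the plan is to use norms of the form seminorms plus $L^2$-norm, for which the constant $1$ holds.
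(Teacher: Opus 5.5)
Your proof is correct and follows the paper's argument. You use the same extension $\langle\tilde p,\varphi\rangle=\langle p,\varphi-\abs{\Omega}^{-1}(\varphi,1)\rangle$, obtain one inequality by restricting the supremum to mean-free $\varphi$, and obtain the other via \eqref{eq:varphimean}. Beyond the paper, you make the uniqueness argument explicit (the paper only constructs $\tilde p$), and you note that for $s>1$ \eqref{eq:varphimean} requires a norm of the form seminorms plus $L^2$-norm, which the paper takes for granted.
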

\begin{proof}
  For each $p\in (H^s(\Omega)\cap L^2_0(\Omega))'$, $s\ge0$, let us define $\tilde p \in H^s(\Omega)'$ by
  \[
    \langle \tilde p,\varphi \rangle:=\langle p, \varphi - \abs{\Omega}^{-1}(\varphi,1)\rangle,
  \]
  which is well-defined as $\varphi - \abs{\Omega}^{-1}(\varphi,1)\in H^s(\Omega)\cap L^2_0(\Omega)$ for $\varphi\in H^s(\Omega)$. By construction, we immediately obtain
  \[
    \tilde p \in \{v\in H^s(\Omega)'\colon \langle v,1\rangle =0\}\quad\text{and}\quad\langle \tilde p ,\varphi\rangle = \langle p, \varphi\rangle \quad \forall \varphi \in H^s(\Omega)\cap L^2_0(\Omega).
  \]
  To see \eqref{eq:equiHs}, we first note that there holds
  \begin{align*}
    \norm{p}_{(H^{s}(\Omega)\cap L^2_0(\Omega))'}=\norm{\tilde p}_{(H^{s}(\Omega)\cap L^2_0(\Omega))'}
    \le \norm{\tilde p}_{H^{s}(\Omega)'}
  \end{align*}
  as $H^{s}(\Omega)\cap L^2_0(\Omega)\subset H^{s}(\Omega)$. 
  For the opposite direction we obtain by means of \eqref{eq:varphimean}
  \begin{align*}
    \norm{\tilde p}_{H^{s}(\Omega)'}
    &=\sup_{\varphi \in H^s(\Omega)\setminus\{0\}}\frac{\langle \tilde p, \varphi\rangle }{\norm{\varphi }_{H^s(\Omega)}}
    = \sup_{\varphi \in H^s(\Omega)\setminus\{0\}}\frac{\langle p, \varphi- \abs{\Omega}^{-1}(\varphi,1)\rangle }{\norm{\varphi }_{H^s(\Omega)}}\\
    &\le \sup_{\varphi \in H^s(\Omega)\setminus\{0\}}\frac{\langle p, \varphi- \abs{\Omega}^{-1}(\varphi,1)\rangle }{\norm{\varphi - \abs{\Omega}^{-1}(\varphi,1)}_{H^s(\Omega)}}
    =\sup_{\varphi \in (H^s(\Omega)\cap L^2_0(\Omega))\setminus\{0\}}\frac{\langle p, \varphi\rangle }{\norm{\varphi}_{H^s(\Omega)}}\\
    &=\norm{p}_{\left(H^{s}(\Omega)\cap L^2_0(\Omega)\right)'},
  \end{align*}
  and the proof is complete.
\end{proof}

\section{\label{sec:NAveryweak}Numerical analysis for the very weak solution}

\subsection{\label{sec:5.1}Main results}

Consider the very weak solution $(\y,p)\in\mathcal{ Y}$ according to \eqref{eq:veryweak1} and the discrete solution $(\y_h,p_h)\in Y_{*h}\times Q_h$ according to \eqref{eq:u_hp_h}. Hence, we use the discretization of a weak formulation with all its properties (see e.\,g.~Remark \ref{rem:tilde}) to approximate the solution of a very weak formulation. However, we have to ensure that the approximations $\u_h$ of the Dirichlet boundary datum $\u$, which has low regularity, are well-posed. 
The approximation order $k=1$ is sufficient for this section, since the regularity of the very weak solution is low.
We derive approximation error estimates
and formulate the results in a way that neither $\langle\u,n\rangle_\Gamma=0$ nor $\langle\u_h,n\rangle_\Gamma=0$ are required.

\begin{theorem}[error estimates in weaker norms]\label{th:erresty-y_h}
  Let $\u\in H^{t'}(\Gamma)^d$ with some $t'\ge t$, $t\in[0,\frac12]$, $s\in(\frac12,\xi)\cap(\frac12,1]$ with $\xi\in\R$ from Subsection~\ref{sec:regularity}, and $s'\in[\frac12,s]\cap[\frac12,1)$. Assume that the discrete spaces fulfill assumptions \eqref{eq:inf-sup-discrete}, \eqref{eq:approxYhQh} and \eqref{eq:assum_boundary}, and the approximate boundary condition satisfies
  \begin{equation}\label{eq:assumvw2}
    \norm{\u-\u_h}_{H^{-s'+\frac12}(\Gamma)^d} \le
    ch^{s+t-\frac12} \norm{\u}_{H^{t'}(\Gamma)^d}
  \end{equation}
  and 
  \begin{equation}\label{eq:assumvw1}
    \norm{\u_h}_{H^{\frac12}(\Gamma)^d}\le c h^{t-\frac12}\norm{\u}_{H^{t}(\Gamma)^d}.
  \end{equation}
  Then there holds the error estimate
  \[
    \nu\norm{\y-\y_h}_{H^{1-s}(\Omega)^d}+\norm{p-p_h}_{H^{s}(\Omega)'} \le c\nu h^{s+t-\frac12} \norm{\u}_{H^{t'}(\Gamma)^d}
  \]
  for $s\in (\frac12,\min(\xi,1))$. If $\xi>1$ and $s=1$, there holds
  \[
    \nu\norm{\y-\y_h}_{L^2(\Omega)^d}+\norm{p-p_h}_{(H^{1}(\Omega)\cap L^2_0(\Omega)\cap V_{-1}^{0,2}(\Omega))'}\le c\nu h^{t+\frac12} \norm{\u}_{H^{t'}(\Gamma)^d}.
  \]
\end{theorem}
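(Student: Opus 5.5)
We split the error with an auxiliary problem. Let $(\y^h,p^h)$ be the weak solution of \eqref{eq:weakneu} with the boundary datum $\u$ replaced by $\u_h\in Y_h^\partial\subset H^{\frac12}(\Gamma)^d$; it exists by Lemma~\ref{lem:2.1}. By Remark~\ref{rem:t>1/2} and Remark~\ref{rem:allthesame} it is also the very weak solution with datum $\u_h$. Then
\[
(\y-\y_h,p-p_h)=(\y-\y^h,p-p^h)+(\y^h-\y_h,p^h-p_h).
\]
The first term is a continuous data-error term. By linearity, $(\y-\y^h,p-p^h)$ is the very weak solution with datum $\u-\u_h\in H^{\frac12-s'}(\Gamma)^d$. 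Theorem~\ref{thm:vwf_wellposed}, applied with the parameter $s'$ (admissible since $s'\in[\frac12,s]\cap[\frac12,1)$ and $s'\le s<\xi$, with $s'=\frac12$ covered by the interpolation argument), gives
\[
\nu\norm{\y-\y^h}_{H^{1-s'}(\Omega)^d}+\norm{p-p^h}_{H^{s'}(\Omega)'}\le c\nu\norm{\u-\u_h}_{H^{\frac12-s'}(\Gamma)^d}.
\]
Since $s'\le s$, we have $\norm{\cdot}_{H^{1-s}}\le\norm{\cdot}_{H^{1-s'}}$ and $\norm{\cdot}_{H^{s}(\Omega)'}\le\norm{\cdot}_{H^{s'}(\Omega)'}$. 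Assumption \eqref{eq:assumvw2} then yields the bound $c\nu h^{s+t-\frac12}\norm{\u}_{H^{t'}(\Gamma)^d}$.

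For $s=1$ we also need the $(H^1\cap L^2_0\cap V^{0,2}_{-1})'$-norm of the pressure. This norm is dominated by the $(H^{s'}\cap L^2_0)'$-norm, because the test space embeds continuously into $H^{s'}\cap L^2_0$. Lemma~\ref{rem:equiHs} lets us pass between the two versions of the dual pressure norm.

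The second term is a pure discretization error with the same discrete boundary datum $\u_h$. Since $\u_h$ is exactly representable, we apply Theorem~\ref{lem:L2velocity} in the case $\u-\u_h\equiv0$ with $t=\frac12$. The hypotheses \eqref{eq:xxx4} and \eqref{eq:xxx5} are trivial, and $k\ge1>0$, so the rate is $h^{s}$. Writing $\bar\y^h$ for the $\nu=1$ solution, we get
\[
\nu\norm{\y^h-\y_h}_{H^{1-s}}+\norm{p^h-p_h}_{H^s(\Omega)'}\le c\nu h^{s}\bigl(\norm{\bar\y^h}_{H^1(\Omega)^d}+\norm{\bar p^h}_{L^2(\Omega)}\bigr).
\]
The analogous bound holds for $s=1$. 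By Lemma~\ref{lem:2.1} and \eqref{eq:vissol}, the bracket is at most $c\norm{\u_h}_{H^{\frac12}(\Gamma)^d}$. Assumption \eqref{eq:assumvw1} bounds this by $ch^{t-\frac12}\norm{\u}_{H^t(\Gamma)^d}\le ch^{t-\frac12}\norm{\u}_{H^{t'}(\Gamma)^d}$, so the total contribution is $c\nu h^{s+t-\frac12}\norm{\u}_{H^{t'}(\Gamma)^d}$. Adding the two contributions proves the theorem, with $s+t-\frac12=t+\frac12$ when $s=1$.

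The main obstacle is a careful check of the case distinctions. First, Theorem~\ref{lem:L2velocity} must really apply with $\u-\u_h\equiv0$ and give constants independent of $\u_h$ beyond $\norm{\u_h}_{H^{1/2}}$. This requires tracing that \eqref{eq:femdual} combined with Theorem~\ref{th:bestapprox} (taking $v_h=0$, $\tilde q_h=0$, or simply the stability of the discrete and continuous solutions) gives $h^s(\nu\norm{\nabla(\y^h-\y_h)}+\norm{p^h-p_h})\le ch^s\nu\norm{\u_h}_{H^{1/2}}$ directly. I would in fact argue this way, which avoids any regularity beyond $H^1$. Second, one must justify applying the very weak well-posedness at the endpoint $s'=\frac12$ and in the $s=1$ pressure norm. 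If the endpoint is delicate, I would use $s'$ slightly larger than $\frac12$ or rely on the interpolation remark preceding Lemma~\ref{rem:equiHs}.
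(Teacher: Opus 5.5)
Your proposal is correct and follows the paper's proof essentially step by step. The paper introduces the same regularized weak solution $(\y^h,p^h)$ with datum $\u_h$. It bounds the regularization error (Lemma~\ref{lem:regerror}) by Theorem~\ref{thm:vwf_wellposed} with parameter $s'$, \eqref{eq:assumvw2} and the embeddings $H^{1-s'}\hookrightarrow H^{1-s}$, $H^{s'}(\Omega)'\hookrightarrow H^{s}(\Omega)'$, and it bounds the discrete part (Lemma~\ref{lem:femveryweak}) by Theorem~\ref{lem:L2velocity} with exact data, Lemma~\ref{lem:2.1} and \eqref{eq:assumvw1}. Your explicit handling of the $s=1$ pressure norm and of the endpoint $s'=\frac12$ only spells out what the paper leaves implicit. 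The endpoint is already covered by Theorem~\ref{thm:vwf_wellposed}, and is avoidable anyway since $\norm{\u-\u_h}_{H^{\frac12-\sigma}(\Gamma)^d}\le c\norm{\u-\u_h}_{H^{\frac12-s'}(\Gamma)^d}$ for every $\sigma\in[s',s]\cap[\frac12,1)$.
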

The proof is postponed to Subsection \ref{sec:reg_error}.

\begin{remark}[estimates in $L^2\times(H^1)'$] \label{rem:erresty-y_h}
  Since $H^{1-s}(\Omega)^d\hookrightarrow L^2(\Omega)^d$ and
  $H^{s}(\Omega)'\hookrightarrow H^1(\Omega)'$, Theorem
  \ref{th:erresty-y_h} implies in particular 
  \begin{align*}
    \nu\|\y-\y_h\|_{L^2(\Omega)^d}+\|p-p_h\|_{H^1(\Omega)'} &\le
    c\nu h^{s+t-\frac12}\|\u\|_{H^{t'}(\Gamma)^d}
  \end{align*}
  for $s\in (\frac12,\min(\xi,1))$. Note that $\y$ belongs to $H^{t+\frac12}(\Omega)^d$. Hence, one may expect an
  approximation order $t+\frac12$ for $\y$ in the $L^2(\Omega)^d$-norm.
  However, for $\xi < 1$, only the order $s + t - \tfrac{1}{2}$ with $s < \xi$ is obtained. We have
  now shown in Theorem \ref{th:erresty-y_h} that this order is
  also achieved in the $H^{1-s}(\Omega)^d$-norm. It does not increase
  for weaker norms, e.\,g.\ the $L^2(\Omega)^d$-norm. \qed
\end{remark}

\begin{remark}[optimality of estimates]
      For $\u\in L^2(\Gamma)^d$ and $\xi$ close to $\frac12$,
  one obtains a very small convergence order but it is sharp as seen in
  numerical tests, see Section~\ref{sec:test}.  \qed
\end{remark}

\begin{remark}[comparison of assumptions]
    Assumption \eqref{eq:assumvw2} is identical to assumption \eqref{eq:xxx5} of Theorem~\ref{lem:L2velocity} except that in Theorem~\ref{th:erresty-y_h} we assume $t\in[0,\frac12]$, whereas in Theorem~\ref{lem:L2velocity} we consider $t\ge\frac12$. \qed
\end{remark}

\begin{remark}[pressure robustness]
    In particular, we get from Theorem~\ref{th:erresty-y_h} that for $s\in (\frac12,\xi)\cap (\frac12,1]$ the estimate
    \begin{align*}
        \norm{\y-\y_h}_{H^{1-s}(\Omega)^d}\le c h^{s+t-\frac12} \norm{\u}_{H^{t'}(\Gamma)^d}
    \end{align*}
    is satisfied, which does not depend on the viscosity parameter $\nu$. Hence, the estimate is pressure robust independent of a particular choice of the finite elements.\qed
\end{remark}

\begin{example}[verification of the assumptions of Theorem~\ref{th:erresty-y_h}]\label{ex:veriassumvw}
We verify assumptions \eqref{eq:assumvw2} and \eqref{eq:assumvw1} for selected finite element pairings and approximations $u_h$. In detail, we discuss the MINI and the Bernardi--Raugel (SMALL) element,  where $k=1$, and the Taylor--Hood $\mathcal{P}_2/\mathcal{P}_1$ element, where $k=2$, see Example~\ref{ex:spaces}. For simplicity, we assume that the underlying triangulation is quasi-uniform. To construct $u_h$, we use the $L^2(\Gamma)^d$-projection, the Carstensen interpolant, or their appropriately modified versions, see Example~\ref{ex:prox} and Example~\ref{ex:proxmod}. Of course, for the modfied versions, we require $\langle u,n\rangle_{\Gamma}=0$, see Lemma~\ref{lem:tildeI_h}. Lagrange interpolation is not applicable since $u\in H^t(\Gamma)$ with $t\in[0,\frac12]$ only. We start with showing condition \eqref{eq:assumvw2}. 

The $L^2(\Gamma)^d$-projection satisfies \eqref{eq:assumvw2}  as follows: From Example~\ref{ex:prox} we deduce
\[
    \norm{\u-\u_h}_{H^{-s'+\frac12}(\Gamma)^d} \le
    ch^{s+t-\frac12} \norm{\u}_{\tilde H^{t+s-s'}(\Gamma)^d}
\]
for $-s'+\frac12\in[-1,1]$ and $t+s-s'\in[-s'+\frac12,k+1]$. The latter condition can be rewritten as $t-\frac12\in[-s,k+s'-s+\frac12]$. Hence, we get
\[
    \norm{\u-\u_h}_{H^{-s'+\frac12}(\Gamma)^d} \le
    ch^{s+\min(t-\frac12,k+s'-s+\frac12)} \norm{\u}_{\tilde H^{t+s-s'}(\Gamma)^d}
\]
for $s'\in[-\frac12,\frac32]$ and $t-\frac12\ge -s$. If we further restrict the ranges for the parameters to $s\in(\frac12,\xi)\cap(\frac12,1]$, $s'\in[\frac12,s]\cap[\frac12,1)$ and $t\in[0,\frac12]$, we obtain $t':=t+s-s'\ge t$ and $t-\frac12\le0\le k + s'-s+\frac12$ such that
\[
    \norm{\u-\u_h}_{H^{-s'+\frac12}(\Gamma)^d} \le
    ch^{s+t-\frac12} \norm{\u}_{\tilde H^{t'}(\Gamma)^d}.
\]

In case of the Carstensen interpolant we obtain from Example~\ref{ex:prox}
\[
    \norm{\u-\u_h}_{H^{-s'+\frac12}(\Gamma)^d} \le
    ch^{s+t-\frac12} \norm{\u}_{\tilde H^{t+s-s'}(\Gamma)^d}
\]
for $-s'+\frac12\in[-1,1]$ and $t+s-s'\in[-s'+\frac12,1]$. The latter condition can be rewritten as $t-\frac12\in[-s,s'-s+\frac12]$. Consequently, we get
\[
    \norm{\u-\u_h}_{H^{-s'+\frac12}(\Gamma)^d} \le
    ch^{s+\min(t-\frac12,s'-s+\frac12)} \norm{\u}_{\tilde H^{t+s-s'}(\Gamma)^d}
\]
for $s'\in[-\frac12,\frac32]$ and $t-\frac12\ge -s$. If we further restrict the ranges for the parameters to $s\in(\frac12,\xi)\cap(\frac12,1]$, $s'\in[\frac12,s]\cap[\frac12,1)$ and $t\in[0,\frac12]$, we obtain $t':=t+s-s'\ge t$ and $t-\frac12\le0\le s'-s+\frac12$ such that
\[
    \norm{\u-\u_h}_{H^{-s'+\frac12}(\Gamma)^d} \le
    ch^{s+t-\frac12} \norm{\u}_{\tilde H^{t'}(\Gamma)^d}.
\]
Hence, assumption \eqref{eq:assumvw2} is shown for the $L^2(\Gamma)^d$-projection and the Carstensen interpolant. It also holds for the appropriately modified versions according to Example~\ref{ex:proxmod}.

Next, we consider the validity of assumption \eqref{eq:assumvw1}. As we assume quasi-uniform meshes, we can use the inverse inequality to get
  \begin{align*}  
    \norm{\u_h}_{H^{\frac12}(\Gamma)^d}\le c h^{t-\frac12} \norm{\u_h}_{H^t(\Gamma)^d}
  \end{align*}
  for all $t\in[0,\frac12]$. Assumption \eqref{eq:assumvw1} is shown by using the stability result 
  \[
      \norm{\u_h}_{H^t(\Gamma)^d} \le \norm{\u}_{H^t(\Gamma)^d} + \norm{\u-\u_h}_{H^t(\Gamma)^d} \le c\norm{\u}_{H^t(\Gamma)^d}
  \]
  which follows for $\u_h=\Ih_h\u$ from \eqref{eq:assumptionregularizer} and for $\u_h=\tilde\Ih_h\u$ from \eqref{eq:assumptionregularizermod} for all $t\in[0,\frac12]$ if $[0,\frac12]\subset I_t$, see Subsection~\ref{sec:reg_error_neu} for the definition of $I_t$. Estimates \eqref{eq:assumptionregularizer} and \eqref{eq:assumptionregularizermod} can be used since $\u_h$ is defined by the $L^2(\Gamma)$-projection, the Carstensen interpolant, or their appropriately modified versions where $[0,1]\subset I_t$, see Example \ref{ex:prox} and Example~\ref{ex:proxmod}.\qed
\end{example}

\begin{example}[error estimates for selected finite element pairings]
We consider in detail the error estimates of Theorem~\ref{th:erresty-y_h} for the finite element pairings from Example~\ref{ex:veriassumvw}. For simplicity, we assume that the underlying triangulation is quasi-uniform. To construct $u_h$, we use the $L^2(\Gamma)^d$-projection, the Carstensen interpolant or their appropriately modified versions. These approximations $u_h$ satisfy \eqref{eq:assumvw2} and \eqref{eq:assumvw1} as discussed in Example~\ref{ex:veriassumvw}.
We distinguish between convex and non-nonconvex domains.

If the domain is convex, then we have $\xi>1\ge s$. Consequently, there holds
\begin{align*} 
     \nu \norm{\y-\y_h}_{H^{1-s}(\Omega)^d} &+ \norm{p-p_h}_{H^{s}(\Omega)'} \le c\nu h^{s+t-\frac12}\|\u\|_{H^{t'}(\Gamma)^d}
\end{align*}
for all $s\in(\frac12,1)$ and $t\in[0,\frac12]$ and
\begin{align*} 
     \nu\norm{\y-\y_h}_{L^2(\Omega)^d} &+ \norm{p-p_h}_{(H^{1}(\Omega)\cap L^2_0(\Omega)\cap V_{-1}^{0,2}(\Omega))'}\le c\nu h^{t+\frac12}\|\u\|_{H^{t'}(\Gamma)^d}
\end{align*}
for all $t\in[0,\frac12]$. The regularity of the Dirichlet boundary datum, which we need, is $\u\in \tilde H^{t'}(\Gamma)^d$ with $t'=t+\varepsilon$ for the (normal and modified) $L^2(\Gamma)^d$-projection and Carstensen interpolant, as we may only choose $s' = 1- \varepsilon$ with arbitrarily small $\varepsilon>0$. 

If $\Omega$ is non-convex, then $\xi<1$ and $s<\xi$. Hence, there only holds
\begin{align*} 
     \nu\norm{\y-\y_h}_{H^{1-s}(\Omega)^d} &+  \norm{p-p_h}_{H^{s}(\Omega)'} \le c\nu h^{s + t-\tfrac{1}{2}} \norm{\u}_{\tilde H^{t'}(\Gamma)^d}
\end{align*}
for all $s\in(\frac12,\xi)$ and $t\in[0,\frac12]$.
The regularity of the Dirichlet boundary datum is $\u\in \tilde H^{t'}(\Gamma)^d$ with $t'=t$ for the (normal and modified) $L^2(\Gamma)^d$-projection and Carstensen interpolant, as we may now choose $s' = s$.   \qed 
\end{example}

\subsection{\label{sec:reg_error}Proof of Theorem \ref{th:erresty-y_h}}

The novelty of this subsection is that the analysis of the discretization error has to be carried out on the basis of a very weak solution such that the weak formulation \eqref{eq:weakneu} cannot be used.
To this end, let us introduce the
intermediate functions $\y^h\in\{v\in H^1(\Omega)^d:v|_\Gamma=\u_h\}$,
$p^h\in Q$ such that
\begin{align}\label{eq:u^h_1}
  \nu(\nabla \y^h,\nabla v)-(\nabla\cdot v,p^h) - (\nabla\cdot \y^h,q)&=0 
  \quad\forall (v,q)\in Y_0\times Q. 
\end{align}
Note that the boundary condition satisfies $\u_h\in 
 Y_h^\partial\subset H^{\frac12}(\Gamma)^d$ such that the weak solution $(\y^h,p^h)$ exists. Hence, it may be denoted as a regularized solution. To prove Theorem~\ref{th:erresty-y_h} we next split the desired error term into two, the regularization error and the finite element error for the regularized solution.

\begin{lemma}[regularization error]\label{lem:regerror}
  Let $\u\in H^{t'}(\Gamma)^d$ with some $t'\ge t\ge0$, $s\in(\frac12,\xi)\cap(\frac12,1]$ with $\xi\in\R$ from Subsection~\ref{sec:regularity}, and $s'\in[\frac12,s]\cap[\frac12,1)$. Assume that the approximate boundary condition satisfies
  \begin{align}\label{eq:approxbound}
    \norm{\u-\u_h}_{H^{-s'+\frac12}(\Gamma)^d} \le
    ch^{s+t-\frac12} \norm{\u}_{H^{t'}(\Gamma)^d}.
  \end{align}
  Then the regularization error satisfies
  \begin{align}\label{eq:reg_error}
    \nu\norm{\y-\y^h}_{H^{1-s}(\Omega)^d}+\norm{p-p^h}_{H^{s}(\Omega)'} &\le c\nu h^{s+t-\frac12} \norm{\u}_{H^{t'}(\Gamma)^d}.
  \end{align}
\end{lemma}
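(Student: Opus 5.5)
The plan is to exploit linearity of the very weak formulation. The weak solution $(\y^h,p^h)$ of \eqref{eq:u^h_1} with boundary datum $\u_h\in H^{\frac12}(\Gamma)^d$ is also a very weak solution; this is the case $s=0$ of Theorem~\ref{thm:vwf_wellposed}, see Remark~\ref{rem:t>1/2}, and by Remark~\ref{rem:allthesame} the solutions for different $s$ coincide. Hence the pair $(\y-\y^h,p-p^h)$ is the unique very weak solution of \eqref{eq:veryweak1} with boundary datum $\u-\u_h$:
\[
  a((\y-\y^h,p-p^h),(v,q))=\nu\langle \u-\u_h,qn-\partial_n v\rangle_\Gamma\quad\forall (v,q)\in\mathcal{V}.
\]
Note that the compatibility condition plays no role here, since \eqref{eq:veryweak1} is posed with test functions in $\mathcal{V}$ only.

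Next I estimate the right-hand side as a functional $F$ on $\mathcal{V}_s$ with the given $s\in(\frac12,\xi)\cap(\frac12,1]$. By duality,
\[
  \abs{F(v,q)}\le \nu\norm{\u-\u_h}_{H^{-s'+\frac12}(\Gamma)^d}\norm{qn-\partial_n v}_{H^{s'-\frac12}(\Gamma)^d}.
\]
Since $s'\in[\frac12,s]\cap[\frac12,1)$, the trace theorem together with \eqref{eq:apriori} gives, exactly as in \eqref{eq:stern} and \eqref{eq:regvqbdry},
\[
  \norm{qn-\partial_n v}_{H^{s'-\frac12}(\Gamma)^d}\le c\left(\norm{v}_{H^{1+s}(\Omega)^d}+\norm{q}_{H^{s}(\Omega)}\right)\le c\norm{(v,q)}_{\mathcal{V}_s}.
\]
The restriction $s'<1$ is what keeps the product $qn$ bounded, since $n$ is only piecewise constant. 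Combining with \eqref{eq:approxbound} yields
\[
  \norm{F}_{\mathcal{V}_s'}\le c\nu h^{s+t-\frac12}\norm{\u}_{H^{t'}(\Gamma)^d}.
\]

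I then apply Lemma~\ref{lemma:BLM} on $\mathcal{Y}_s\times\mathcal{V}_s$. It gives a unique $(e_y,e_p)\in\mathcal{Y}_s$ solving the equation for all $(v,q)\in\mathcal{V}_s$, with $\norm{(\nu e_y,e_p)}_{\mathcal{Y}_s}\le c\norm{F}_{\mathcal{V}_s'}$. Because $\mathcal{V}_1\hookrightarrow\mathcal{V}_s$, Remark~\ref{rem:allthesame} and uniqueness in $\mathcal{Y}_1$ identify this pair with $(\y-\y^h,p-p^h)$. The norms then translate as follows. For $s>\frac12$ we have $H^{1-s}_0(\Omega)^d=H^{1-s}(\Omega)^d$. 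For $s<1$, Lemma~\ref{rem:equiHs} identifies $(H^s(\Omega)\cap L^2_0(\Omega))'$ with the mean-free part of $H^s(\Omega)'$; here $p-p^h$ has zero mean by construction. For $s=1$ (with $\xi>1$) the pressure is measured in the $\mathcal{Y}_1$-norm, $(H^1\cap L^2_0\cap V^{0,2}_{-1})'$, which is the norm implicit in \eqref{eq:reg_error} in that case.

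The main obstacle is the first step. One must justify rigorously that $(\y^h,p^h)$, and hence the difference, satisfies the very weak formulation with test space $\mathcal{V}$. This needs the integration by parts $\nu(\y^h,-\Delta v+\nabla q)-\langle\nabla\cdot v,p^h\rangle=\nu\langle\u_h,qn-\partial_nv\rangle_\Gamma$ for $(v,q)\in\mathcal{V}_1$, where $v$ is only $H^{1+s}$ with $s<\xi$. The normal derivative $\partial_n v$ is therefore only in $H^{s-\frac12}(\Gamma)$, and the pressure test $q$ is merely mean-free. I would handle this through Lemma~\ref{lem:tildeF=F-fast}, or directly through the $s=0$ case of Corollary~\ref{cor:BLM2}. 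Two observations make this work. First, $(\nabla\cdot\y^h,q)$ vanishes only for $q\in Q$, which suffices because $\mathcal{V}$ uses $L^2_0$. Second, the term $\nu(\nabla\cdot\y^h,q)$ produced by the integration by parts cancels correctly against \eqref{eq:u^h_1}.
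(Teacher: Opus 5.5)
Your argument is correct for $s\in(\frac12,\min(\xi,1))$, but it is organized differently from the paper's proof.

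\textbf{Comparison of routes.} The paper applies the a priori estimate of Theorem~\ref{thm:vwf_wellposed} at the \emph{lower} level $s'$, to the datum $\u-\u_h\in H^{\frac12-s'}(\Gamma)^d$. This gives a bound for $\nu\norm{\y-\y^h}_{H^{1-s'}(\Omega)^d}+\norm{p-p^h}_{H^{s'}(\Omega)'}$. Only afterwards does it pass to the target norms, via $H^{1-s'}(\Omega)^d\hookrightarrow H^{1-s}(\Omega)^d$ and $H^{s'}(\Omega)'\hookrightarrow H^{s}(\Omega)'$.

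You instead work at the \emph{higher} level $s$ from the start. You bound the boundary functional on $\mathcal{V}_s$ via the trace estimate \eqref{eq:regvqbdry}; the condition $s'\le s$ is what lets the $\mathcal{V}_s$-norm control $\norm{qn-\partial_n v}_{H^{s'-\frac12}(\Gamma)^d}$. You then invoke Lemma~\ref{lemma:BLM} on $\mathcal{Y}_s\times\mathcal{V}_s$ directly.

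What your route buys is that it never touches the exceptional level $\frac12$. Since $s>\frac12$ strictly, you avoid the real-interpolation step that Theorem~\ref{thm:vwf_wellposed} relies on when $s'=\frac12$, which the paper only sketches. What the paper's route buys is the stronger intermediate estimate in $\mathcal{Y}_{s'}$, and in particular the stated pressure norm at $s=1$.

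\textbf{The case $s=1$.} That last point is a real, if small, gap in your proposal. For $s=1$ (hence $\xi>1$) the lemma asserts a bound on $\norm{p-p^h}_{H^{1}(\Omega)'}$. This is not the $\mathcal{Y}_1$ pressure norm. On mean-free functionals, the norm of $(H^1(\Omega)\cap L^2_0(\Omega)\cap V^{0,2}_{-1}(\Omega))'$ is strictly weaker, because its test space is smaller and carries the additional weighted norm. So Lemma~\ref{lemma:BLM} at level $1$ does not deliver the claim. Your remark that this weaker norm is ``the norm implicit in \eqref{eq:reg_error}'' is a misreading. That norm does appear in Theorem~\ref{th:erresty-y_h} for $s=1$, because of the finite element part coming from Lemma~\ref{lem:femveryweak}, but not in this lemma.

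The repair is easy. Since $s'<1<\xi$, run your argument at any level $\sigma\in[s',1)\cap(\frac12,1)$ instead of at level $1$. This gives $\nu\norm{\y-\y^h}_{H^{1-\sigma}(\Omega)^d}+\norm{p-p^h}_{H^{\sigma}(\Omega)'}\le c\nu h^{s+t-\frac12}\norm{\u}_{H^{t'}(\Gamma)^d}$. Then embed into $L^2(\Omega)^d\times H^1(\Omega)'$. For $s'>\frac12$, taking $\sigma=s'$ is exactly the paper's device.
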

\begin{proof}
  As mentioned in Remark \ref{rem:t>1/2}, the weak solution $(\y^h,p^h)$ is also a very weak solution. In particular, it satisfies 
  \begin{align*}
    a((\y-\y^h,p-p^h),(v,q))&= \nu\langle \u- \u_h,\partial_n v-qn\rangle_\Gamma \quad
    \forall (v,q)\in \mathcal{V}.
  \end{align*} 
  The a priori estimate of Theorem~\ref{thm:vwf_wellposed} and assumption \eqref{eq:approxbound} then imply
  \[
    \nu\norm{\y-\y^h}_{H^{1-s'}(\Omega)^d}+\norm{p-p^h}_{H^{s'}(\Omega)'} \le c\nu\|\u-\u_h\|_{H^{-s'+\frac12}(\Gamma)^d}\le c\nu h^{s+t-\frac12} \norm{\u}_{H^{t'}(\Gamma)^d}.
  \]
  Observing that $H^{1-s'}(\Omega)^d\hookrightarrow H^{1-s}(\Omega)^d$ and $H^{s'}(\Omega)'\hookrightarrow H^{s}(\Omega)'$ ends the proof.
\end{proof}

\begin{remark}[alternative definition of the very weak solution]
  From the proof we can conclude that 
  $\lim_{h\to0}(\nu\norm{\y-\y^h}_{H^{1-s'}(\Omega)^d}+\norm{p-p^h}_{H^{s'}(\Omega)'})=0$ whenever
  $\lim_{h\to0}\|\u-\u_h\|_{H^{-s'+\frac12}(\Gamma)^d}=0$. This means that the very weak solution $(\y,p)$
  can also be defined as the limit of the weak solution $(\y^h,p^h)$
  with regularized boundary datum $\u_h$.  \qed
\end{remark}

Observing that $(\y^h,p^h)$ is a weak solution and $(\y_h,p_h)$ the
finite element approximation with the same datum $\u_h\in
 Y_h^\partial$, we can use the finite element error estimates for weak
solutions from Section \ref{sec:3.1}.

\begin{lemma}[finite element error for the regularized solution]\label{lem:femveryweak}
  Let $\u\in H^{t}(\Gamma)^d$, $t\in[0,\frac12]$, and assume that the assumptions \eqref{eq:inf-sup-discrete}, \eqref{eq:approxYhQh} and \eqref{eq:assum_boundary} as well as the estimate
  \begin{equation}\label{eq:uh12}
    \norm{\u_h}_{H^{\frac12}(\Gamma)^d}\le c h^{t-\frac12}\norm{\u}_{H^{t}(\Gamma)^d}
  \end{equation}
  are satisfied. Then the error estimate 
  \begin{align*} 
     \nu\norm{\y^h-\y_h}_{H^{1-s}(\Omega)^d} +  \norm{p^h-p_h}_{H^{s}(\Omega)'}
     &\le c\nu h^{s+t-\frac12}\norm{\u}_{H^{t}(\Gamma)^d}
  \end{align*}
  holds for $s\in(\frac12,\min (\xi,1))$ with $\xi\in\R$ from Subsection  \ref{sec:regularity}. If $\xi>1$, there additionally holds
  \[
    \nu\norm{\y^h-\y_h}_{L^2(\Omega)^d}+\norm{p^h-p_h}_{(H^{1}(\Omega)\cap L^2_0(\Omega)\cap V_{-1}^{0,2}(\Omega))'}\le c\nu h^{t+\frac12} \norm{\u}_{H^{t}(\Gamma)^d}.
  \]
\end{lemma}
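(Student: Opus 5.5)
The plan is to treat $(\y^h,p^h)$ as a weak solution with boundary datum $\u_h$. Then $(\y_h,p_h)$ is its finite element approximation, and the boundary data match exactly, so the data error $\u_h-\u_h$ vanishes. The duality argument of Lemma~\ref{weak_error_first_part} applies verbatim with $\u$ replaced by $\u_h$ and $(\y,p)$ replaced by $(\y^h,p^h)$. Since the boundary term $\nu\langle \u_h-\u_h,\partial_n v-qn\rangle_\Gamma$ disappears, we obtain
\[
  \nu\langle \y^h-\y_h,f\rangle+(p^h-p_h,g)\le c h^s\left(\nu\norm{\nabla(\y^h-\y_h)}_{L^2(\Omega)^{d\times d}}+\norm{p^h-p_h}_{L^2(\Omega)}\right)\norm{(f,g)}_{\mathcal{Y}_s'}.
\]
Here $\norm{(f,g)}_{\mathcal{Y}_s'}$ denotes the norm appearing in \eqref{eq:aprioridauge}, with $s\in(\frac12,\min(\xi,1))$, or $s=1$ if $\xi>1$. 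Taking suprema exactly as in the proof of Theorem~\ref{lem:L2velocity} will then give the bounds for $\nu\norm{\y^h-\y_h}_{H^{1-s}(\Omega)^d}$ and $\norm{p^h-p_h}_{H^s(\Omega)'}$. For the velocity we take $g=0$. For the pressure we take $f=0$ and $g=\varphi-\abs{\Omega}^{-1}(\varphi,1)$, which is justified by \eqref{eq:varphimean}. In each case the bound is $h^s$ times the energy error of the regularized problem.

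It remains to show that the energy error satisfies
\[
  \nu\norm{\nabla(\y^h-\y_h)}_{L^2(\Omega)^{d\times d}}+\norm{p^h-p_h}_{L^2(\Omega)}\le c\nu h^{t-\frac12}\norm{\u}_{H^t(\Gamma)^d}.
\]
Here we cannot use approximation orders, because $\y^h$ has no uniform higher regularity. The plan is to use stability in place of approximation. By Lemma~\ref{lem:2.1} applied with datum $\u_h$, we have $\nu\norm{\y^h}_{H^1(\Omega)^d}+\norm{p^h}_{L^2(\Omega)}\le c\nu\norm{\u_h}_{H^{\frac12}(\Gamma)^d}$.

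For the discrete pair we would prove the analogous bound $\nu\norm{\nabla\y_h}_{L^2(\Omega)^{d\times d}}+\norm{p_h}_{L^2(\Omega)}\le c\nu\norm{\u_h}_{H^{\frac12}(\Gamma)^d}$. The argument writes $\y_h=S_h\u_h+\y_{0h}$. Lemma~\ref{lem:discreteharmonicstabil} controls $S_h\u_h$. The discrete saddle point problem for $(\y_{0h},p_h)$, whose data are bounded by $\nu\norm{\nabla S_h\u_h}_{L^2(\Omega)^{d\times d}}$, is then handled by the inf-sup condition \eqref{eq:inf-sup-discrete}. The pressure scaling of Remark~\ref{rem:vissol} keeps the factor $\nu$ correct.

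Alternatively, we could apply Theorem~\ref{th:bestapprox} with data $\u_h$. The data term then vanishes, and we choose $v_h=0$ and $\tilde q_h=0$ to get the bound $c\nu(\norm{\bar\y^h}_{H^1(\Omega)^d}+\norm{\bar p^h}_{L^2(\Omega)})\le c\nu\norm{\u_h}_{H^{\frac12}(\Gamma)^d}$. This is the cleanest route, and I would use it. The triangle inequality together with \eqref{eq:uh12} then gives the factor $h^{t-\frac12}\norm{\u}_{H^t(\Gamma)^d}$.

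Combining the two steps yields the order $h^s\cdot h^{t-\frac12}=h^{s+t-\frac12}$, which is the first estimate. The case $s=1$, $\xi>1$ gives $h^{t+\frac12}$ with the $(H^1\cap L^2_0\cap V^{0,2}_{-1})'$ norm for the pressure, as in Theorem~\ref{lem:L2velocity}. The main point to check is that the dual estimate \eqref{eq:errordual} and Lemma~\ref{weak_error_first_part} need no regularity of $\y^h$ beyond $H^1$. This holds because only the dual solution's regularity \eqref{eq:aprioridauge} enters, together with Galerkin orthogonality for the common datum $\u_h$. Consequently no constant depends on $h$ through $\y^h$.
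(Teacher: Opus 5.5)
Your proposal is correct and takes essentially the paper's route. The paper invokes Theorem~\ref{lem:L2velocity} directly with $t=\frac12$ and datum $\u_h$, so the data error vanishes, obtaining $ch^s\bigl(\nu\norm{\y^h}_{H^1(\Omega)^d}+\norm{p^h}_{L^2(\Omega)}\bigr)$, and then concludes with Lemma~\ref{lem:2.1} and \eqref{eq:uh12}. You unroll that theorem into Lemma~\ref{weak_error_first_part} plus Theorem~\ref{th:bestapprox} with $v_h=0$, $\tilde q_h=0$, which is exactly what it does internally at $t=\frac12$.
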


\begin{proof}
  A direct application of Theorem \ref{lem:L2velocity}, having in mind $(y,p)=(\bar y,\nu \bar p)$, results in the estimates
  \begin{align*} 
     \nu\norm{\y^h-\y_h}_{H^{1-s}(\Omega)^d} +  \norm{p^h-p_h}_{H^{s}(\Omega)'}
     &\le ch^{s}\left(\nu\norm{\y^h}_{H^{1}(\Omega)^d}+
    \norm{p^h}_{L^2(\Omega)}\right)
  \end{align*}
  for $s\in (\frac12,\min(\xi,1))$ and
  \[
    \nu\norm{\y^h-\y_h}_{L^2(\Omega)^d}+\norm{p^h-p_h}_{(H^{1}(\Omega)\cap L^2_0(\Omega)\cap V_{-1}^{0,2}(\Omega))'}\le c h \left(\nu\norm{\y^h}_{H^{1}(\Omega)^d}+
    \norm{p^h}_{L^2(\Omega)}\right)
  \]
  for $\xi>1$ and $s=1$. Using the regularity result from Lemma \ref{lem:2.1} or Theorem \ref{thm:vwf_wellposed} and assumption \eqref{eq:uh12}
  shows the assertion. 
\end{proof}
Theorem~\ref{th:erresty-y_h} is now a direct consequence of Lemma~\ref{lem:regerror} and Lemma~\ref{lem:femveryweak}.

\section{\label{sec:test}Test example}

In this section, we will present a series of numerical experiments to validate the theoretical results. The focus lies on verifying that the approximation orders presented in Sections \ref{sec:NumerikWeak} and \ref{sec:NAveryweak} are indeed achieved in practice. To this end, we consider a test example with a known analytical solution, for which the regularity can be prescribed as required. 

The main error estimates of this paper are formulated in
  Theorem \ref{th:approxweak} or Theorem~\ref{th:bestapproxindependent}, respectively, and in Theorem \ref{lem:L2velocity} for the weak
  solution and in Theorem~\ref{th:erresty-y_h} for the very weak
  solution. Together with the Remarks \ref{rem:erresty-y_h_kap3} and \ref{rem:erresty-y_h} we find the
  common error estimate
\begin{align}\label{eq:ee}
  \norm{\y-\y_h}_{L^2(\Omega)^d} &\le ch^{s+\min(t-\frac12,k)}
\end{align}
with $s=1$ if\/ $\Omega$ is convex and $s\in(\frac12,\xi)$ if $\Omega$
is non-convex, using $\xi\in(\frac12,1)$ from Subsection
\ref{sec:regularity}. 
In this estimate, the constant $c$ also depends on $y\in H^{t+\frac12}(\Omega)^d$, $p\in H^{t-\frac12}(\Omega)$, and $u \in H^{t'}(\Omega)^d$.
In order to confirm the estimate numerically we
consider two domains~$\Omega$ with maximal interior angle $ \omega =
\frac{2}{3} \pi$ in the convex case and $\omega = \frac{3}{2} \pi$ in
the non-convex case, see Figure \ref{fig:domains}.
\begin{figure}[tbp]\hspace*{0.02\textwidth}
  \includegraphics[width=0.38\textwidth]{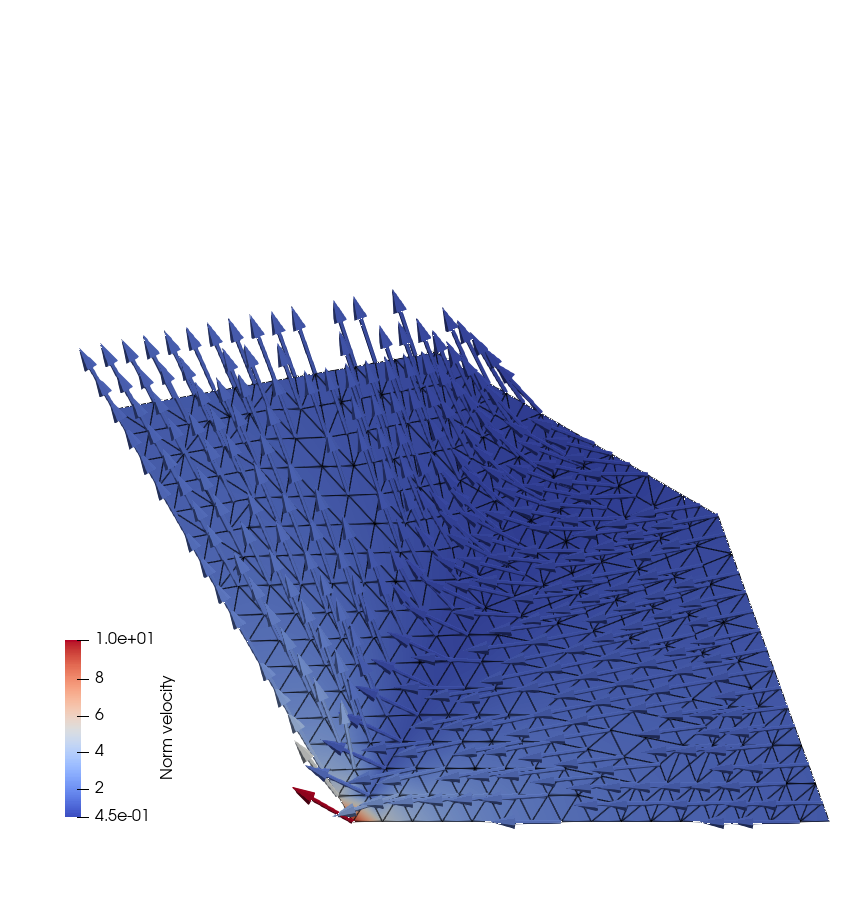}\hfill
  \includegraphics[width=0.58\textwidth]{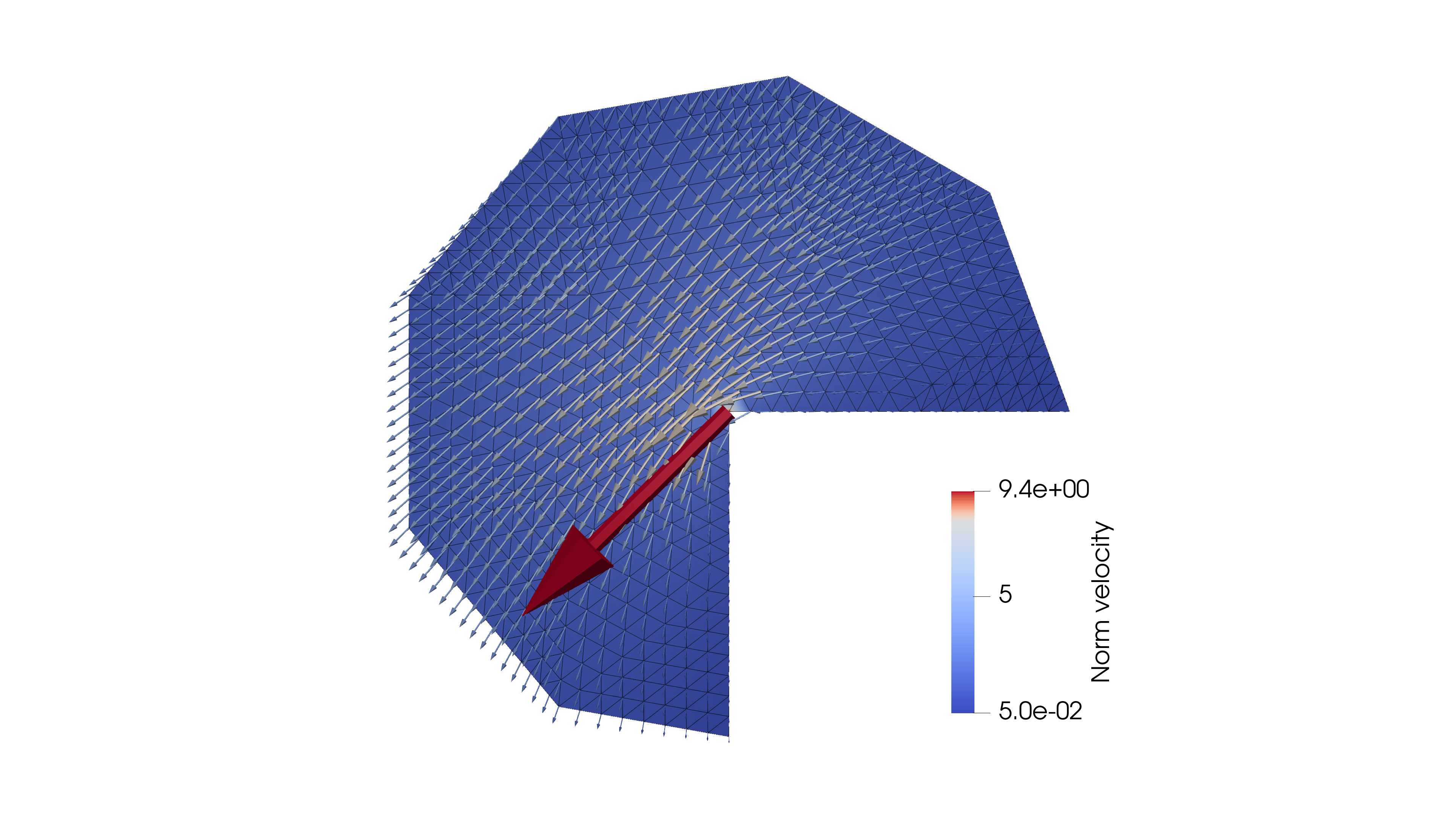}
  \caption{\label{fig:domains}Test domains with maximal angle $ \omega =\frac{2}{3} \pi$ and $\omega = \frac{3}{2} \pi$, resp., with velocity for $\alpha=-0.499$}
\end{figure}
In the latter case one obtains from \cite{dauge:89} that $\xi\approx
0.544$, see also Remark \ref{rem:xi}. In order to use various values
of the regularity parameter $t$ we use the following manufactured
solutions the Stokes problem \eqref{eq:stokesclassical}. Their origin
is explained in \cite{ALN:24}.  

Using polar coordinates $(r,\theta)$, $\theta\in[0,\omega]$, the functions
\[ \y=\left(\begin{array}{c}
 r^\alpha \Phi_1(\theta)  \\
 r^\alpha \Phi_2(\theta)
\end{array}\right), 
\qquad p = r^{\alpha-1} \Phi_p(\theta), \]
with 
\begin{align*}
\Phi_1(\theta) & = -\sin(\alpha\theta)\cos\omega
-\alpha\sin\theta\cos(\alpha(\omega-\theta)+\theta) \\ & \qquad
+\alpha\sin(\omega-\theta)\cos(\alpha\theta-\theta)
+\sin(\alpha(\omega-\theta)),
\\
\Phi_2(\theta) & = -\sin(\alpha\theta)\sin\omega
-\alpha\sin\theta\sin(\alpha(\omega-\theta)+\theta)
-\alpha\sin(\omega-\theta)\sin(\alpha\theta-\theta),
\\
\Phi_p(\theta) & = 2\alpha \,[ 
\sin((\alpha-1)\theta+\omega)
+\sin((\alpha-1)\theta-\alpha\omega) ]
\end{align*}
solve the homogeneous Stokes equations \eqref{eq:stokesclassical}, 
for any $\alpha\in\R$ and $\nu =1$.
Since $\y\in H^{t+\frac12}(\Omega)^d$ and $p\in H^{t-\frac12}(\Omega)$ for $t<\frac12+\alpha$ 
we can drive the regularity by choosing an appropriate value of
$\alpha>-1$.

In the first numerical experiment, we use the inf-sup stable Taylor--Hood element
$\mathcal{P}_2/\mathcal{P}_1$, so that $k=2$. The meshes are
uniformly refined. The boundary data are treated with the $L^2(\Gamma)$-projection, both into $Y_h^\partial$ as well into the space $\tilde Y_h^\partial$, where the projection is defined by 
using $w_h = y_0|_{\Gamma} \in Y_h^{\partial}$ with $y_0=\frac12\binom{x_1-\bar x_1}{x_2-\bar x_2}$ where $(\bar x_1,\bar x_2)$ is a
random interior point of the domain. Regardless of which method is chosen, we expect the approximation
order in \eqref{eq:ee} to be approximately
\[
  s+t-\frac12\approx\begin{cases}
    1+\alpha & \text{for the convex example domain,} \\
    0.5445+\alpha & \text{for the non-convex example domain,}
  \end{cases}
\]
for $t\le \frac52$. Tables \ref{tab:convex} and \ref{tab:non-convex}
\begin{table}[bt]
  \centering
  \caption{\label{tab:convex}Discretization error $e_h=\| y - y_h \|_{L^2(\Omega)^d}$ and 
      experimental convergence order for interior angle $\omega = \frac23 \pi$}
  \begin{tabular}{ccccccccc}\hline
           & \multicolumn{2}{c}{$\alpha = 0.5$} 
           & \multicolumn{2}{c}{$\alpha = 0.1$} 
           & \multicolumn{2}{c}{$\alpha = -0.1$} 
           & \multicolumn{2}{c}{$\alpha = -0.499$} \\
       $h$ &  $e_h$ & eoc    &  $e_h$ & eoc    &  $e_h$ & eoc    &  $e_h$ & eoc    \\ \hline
  $2^{-1}$  & 0.0104 &-     & 0.0079 & -      & 0.0190 & -      & 0.5429 & -      \\
  $2^{-2}$  & 0.0036 & 1.5160 & 0.0039 & 1.0199 & 0.0107 & 0.8302 & 0.3887 & 0.4818 \\
  $2^{-3}$  & 0.0012 & 1.4986 & 0.0018 & 1.0934 & 0.0057 & 0.8951 & 0.2750 & 0.4996 \\
  $2^{-4}$  & 0.0005 & 1.4996 & 0.0009 & 1.1000 & 0.0031 & 0.9001 & 0.1943 & 0.5010 \\
  $2^{-5}$  & 0.0002 & 1.4998 & 0.0004 & 1.0999 & 0.0017 & 0.9000 & 0.1373 & 0.5010 \\
  $2^{-6}$  & 5.7E-5 & 1.4999 & 0.0002 & 1.0999 & 0.0009 & 0.9000 & 0.0970 & 0.5010 \\ 
  $2^{-7}$  & 2.0E-5 & 1.5000 & 8.6E-5 & 1.1000 & 0.0005 & 0.9000 & 0.0685 & 0.5010 \\
  $2^{-8}$  & 7.1E-6 & 1.5000 & 4.9E-5 & 1.1000 & 0.0003 & 0.9000 & 0.0484 & 0.5010 \\
   $2^{-9}$  & 2.5E-6 & 1.5000 & 1.9E-5 & 1.1000 & 0.0001 & 0.9000 & 0.0342 & 0.5010 \\\hline
  expected &        & 1.5000 &        & 1.1000 &        & 0.9000 &        & 0.5010 \\ \hline
  \end{tabular}
  \caption{\label{tab:non-convex}Discretization error $e_h=\| y - y_h \|_{L^2(\Omega)^d}$ and 
      experimental convergence order for interior angle $\omega = \frac32 \pi$}
  \begin{tabular}{ccccccccc}\hline
           & \multicolumn{2}{c}{$\alpha = 0.5$} 
           & \multicolumn{2}{c}{$\alpha = 0.1$} 
           & \multicolumn{2}{c}{$\alpha = -0.1$} 
           & \multicolumn{2}{c}{$\alpha = -0.499$} \\
       $h$ &  $e_h$ & eoc    &  $e_h$ & eoc    &  $e_h$ & eoc    &  $e_h$ & eoc    \\ \hline
  $2^{-1}$  & 0.0697 &-     & 0.0480 & -      & 0.0546 & -      & 0.5510 & -      \\
  $2^{-2}$  & 0.0288 & 1.2753 & 0.0274 & 0.8095 & 0.0353 & 0.6291 & 0.5594 & 0.0216 \\
  $2^{-3}$  & 0.0116 & 1.3107 & 0.0156 & 0.8120 & 0.0231 & 0.6103 & 0.5561 & 0.0085 \\
  $2^{-4}$  & 0.0048 & 1.2628 & 0.0091 & 0.7712 & 0.0156 & 0.5662 & 0.5451 & 0.0287 \\
  $2^{-5}$  & 0.0021 & 1.2035 & 0.0055 & 0.7289 & 0.0109 & 0.5242 & 0.5311 & 0.0377 \\
  $2^{-6}$  & 0.0009 & 1.1493 & 0.0034 & 0.6963 & 0.0077 & 0.4928 & 0.5159 & 0.0418 \\ 
  $2^{-7}$  & 0.0004 & 1.1084 & 0.0021 & 0.6746 & 0.0056 & 0.4723 & 0.5005 & 0.0437 \\
  $2^{-8}$  & 0.0002 & 1.0815 & 0.00135 & 0.6614 & 0.0040 & 0.4600 & 0.4853 & 0.0446 \\
   $2^{-9}$  & 9.9E-5 & 1.0651 & 0.0009 & 0.6538 & 0.0030 & 0.4530 & 0.4703 & 0.0450 \\\hline
  expected &        & 1.0445 &        & 0.6445 &        & 0.4445 &        & 0.0445 \\ \hline
  \end{tabular}
\end{table}
show the computed error norms $e_h=\norm{\y-\y_h}_{L^2(\Omega)^d}$ for
various mesh sizes $h$ and the experimental approximation orders eoc
computed from two succesive meshes.  The results confirm the proven
error orders well, in the case of the convex domain even particularly
well. Both projections resulted in the same numerical results up to the eighth digit such that we present the results in the same tables.

Our numerical tests with the MINI element confirmed the expected error orders as well, gave no further insights, and are therefore not documented here.

In a further experiment, we investigate the pressure robustness of the discretization and the influence of the compatibility correction of the boundary data. Therefore, we consider the same manufactured solution as before. To keep $\y$ as the exact velocity solution for every value of $\nu > 0$ we rescale the pressure via $ \tilde p = \nu p$ so that $(\y,\tilde p)$ solves 
\begin{align*}
    -\nu\Delta y+\nabla \tilde p &= 0 \quad\text{in }\Omega, \\
    \nabla\cdot y&=0\quad \text{in }\Omega, \\
    y&=u\quad\text{on }\Gamma.
\end{align*}
In order to keep the comparability, the computations are carried out with the Taylor--Hood element $\mathcal{P}_4/\mathcal{P}_3$ on the original mesh and the Scott--Vogelius element $\mathcal{P}_4/\mathcal{P}_3^{disc}$ on the barycentrically refined mesh.
Moreover, the boundary data are again treated with the $L^2(\Gamma)$-projection both without and with the compatibility correction introduced above. Tables
\ref{tab: SV-hom-nocorrection} and \ref{tab: TH-hom-nocorrection} report the
results without the correction, tables \ref{tab: SV-hom-correction} and
\ref{tab: TH-hom-correction} report the results with the correction.

\begin{table}[t]
  \centering
    \caption{\label{tab: SV-hom-nocorrection}Discretization error
$e_h=\norm{y-y_h}_{L^2(\Omega)^d}$, divergence error
$\eta_h=\norm{\div(y_h)}_{L^2(\Omega)}$ for 
$\mathcal{P}_4/\mathcal{P}_3^{disc}$, $\omega=\frac32\pi$, $h = 2^{-4}$,
boundary datum without correction}
  \begin{tabular}{ccccccccccc}\hline
           & \multicolumn{2}{c}{$\alpha = 0.5$}
           & \multicolumn{2}{c}{$\alpha = 0.1$}
           & \multicolumn{2}{c}{$\alpha = -0.1$} 
           & \multicolumn{2}{c}{$\alpha = -0.499$}\\
       $\nu$ & $e_h$ & $\eta_h$    &  $e_h$ & $\eta_h$    & $e_h$ & $\eta_h$ & $e_h$ & $\eta_h$    \\ \hline
  $1$     & 0.0004 & 3.7E-8 & 0.0004 & 3.2E-7   & 0.0020 & 1.5E-6 & 0.4969 & 9.5E-5   \\
  $10^{-1}$ & 0.0004 & 3.7E-8 & 0.0004  & 3.2E-7 & 0.0020  & 1.5E-6 & 0.4969 & 9.5E-5 \\
  $10^{-2}$ & 0.0004 & 3.7E-8 & 0.0004  & 3.2E-7 & 0.0020  & 1.5E-6 & 0.4969 & 9.5E-5 \\
  $10^{-3}$ & 0.0004 & 3.7E-8 & 0.0004  & 3.2E-7 & 0.0020  & 1.5E-6 & 0.4969 & 9.5E-5 \\
  $10^{-4}$ & 0.0004 & 3.7E-8 & 0.0004  & 3.2E-7 & 0.0020  & 1.5E-6 & 0.4969 & 9.5E-5 \\
  $10^{-5}$ & 0.0004 & 3.7E-8 & 0.0004  & 3.2E-7 & 0.0020  & 1.5E-6 & 0.4969 & 9.5E-5 \\ \hline
  \end{tabular}

  
    \caption{\label{tab: TH-hom-nocorrection}Discretization error
$e_h=\norm{y-y_h}_{L^2(\Omega)^d}$, divergence error
$\eta_h=\norm{\div (y_h)}_{L^2(\Omega)}$ for 
$\mathcal{P}_4/\mathcal{P}_3$, $\omega=\frac32\pi$, $h = 2^{-4}$,
boundary datum without correction}
  \begin{tabular}{ccccccccccc}\hline
           & \multicolumn{2}{c}{$\alpha = 0.5$}
           & \multicolumn{2}{c}{$\alpha = 0.1$}
           & \multicolumn{2}{c}{$\alpha = -0.1$} 
           & \multicolumn{2}{c}{$\alpha = -0.499$}\\
       $\nu$ & $e_h$ & $\eta_h$    &  $e_h$ & $\eta_h$    & $e_h$ & $\eta_h$ & $e_h$ & $\eta_h$   \\ \hline
  $1$     & 0.0004 & 0.0717  & 0.0022 & 0.1833   & 0.0055 & 0.3696 & 0.5189 & 2.9523   \\
  $10^{-1}$   & 0.0004 & 0.0717 & 0.0022  & 0.1833 & 0.0055  & 0.3696 & 0.5189 & 2.9523 \\
  $10^{-2}$    & 0.0004 & 0.0717 &  0.0022  & 0.1833 & 0.0055  & 0.3696 & 0.5189 & 2.9523 \\
  $10^{-3}$   & 0.0004 & 0.0717 &  0.0022  & 0.1833 & 0.0055  & 0.3696 & 0.5189 & 2.9523 \\
  $10^{-4}$   & 0.0004 & 0.0717 &  0.0022  & 0.1833 & 0.0055  & 0.3696 & 0.5189 & 2.9523 \\
  $10^{-5}$    & 0.0004 & 0.0717 &  0.0022  & 0.1833 & 0.0055  & 0.3696 & 0.5189 & 2.9523 \\ \hline
  \end{tabular}

  \end{table}

\begin{table}[t]
  \centering
  \caption{\label{tab: SV-hom-correction}Discretization error
$e_h=\norm{y-y_h}_{L^2(\Omega)^d}$, divergence error
$\eta_h=\norm{\div (y_h)}_{L^2(\Omega)}$ for 
$\mathcal{P}_4/\mathcal{P}_3^{disc}$, $\omega=\frac32\pi$, $h = 2^{-4}$,
boundary datum with correction}
  \begin{tabular}{ccccccccccc}\hline
           & \multicolumn{2}{c}{$\alpha = 0.5$}
           & \multicolumn{2}{c}{$\alpha = 0.1$}
           & \multicolumn{2}{c}{$\alpha = -0.1$} 
           & \multicolumn{2}{c}{$\alpha = -0.499$}\\
       $\nu$ & $e_h$ & $\eta_h$    &  $e_h$ & $\eta_h$    & $e_h$ & $\eta_h$ & $e_h$ & $\eta_h$   \\ \hline
  $1$   & 0.0004 & 2.9E-11  & 0.0004 & 8.0E-12   & 0.0020 & 7.8E-12 & 0.4969 & 2.6E-11   \\
  $10^{-1}$   & 0.0004 & 3.4E-11 & 0.0004  & 9.3E-12 & 0.0020  & 8.6E-12 & 0.4969 & 2.4E-11 \\
  $10^{-2}$   & 0.0004 & 2.7E-11 & 0.0004  & 6.5E-12 & 0.0020  & 6.1E-12 & 0.4969 & 2.0E-11 \\
  $10^{-3}$   & 0.0004 & 1.4E-11 & 0.0004  & 4.8E-12 & 0.0020  & 3.8E-12 & 0.4969 & 1.2E-11\\
  $10^{-4}$   & 0.0004 & 2.0E-11 & 0.0004  & 5.4E-12 & 0.0020  & 5.9E-12 & 0.4969 & 2.4E-11 \\
  $10^{-5}$   & 0.0004 & 1.6E-11 & 0.0004  & 3.4E-12 & 0.0020  & 3.4E-12 & 0.4969 & 1.0E-11 \\ \hline
  \end{tabular}
  
    \caption{\label{tab: TH-hom-correction}Discretization error
$e_h=\norm{y-y_h}_{L^2(\Omega)^d}$, divergence error
$\eta_h=\norm{\div( y_h)}_{L^2(\Omega)}$ for 
$\mathcal{P}_4/\mathcal{P}_3$, $\omega=\frac32\pi$, $h = 2^{-4}$,
boundary datum with correction}
  \begin{tabular}{ccccccccccc}\hline
           & \multicolumn{2}{c}{$\alpha = 0.5$}
           & \multicolumn{2}{c}{$\alpha = 0.1$}
           & \multicolumn{2}{c}{$\alpha = -0.1$} 
           & \multicolumn{2}{c}{$\alpha = -0.499$}\\
       $\nu$ & $e_h$ & $\eta_h$    &  $e_h$ & $\eta_h$    & $e_h$ & $\eta_h$ & $e_h$ & $\eta_h$   \\ \hline
  $1$    & 0.0004 & 0.0717 & 0.0022 & 0.1833   & 0.0055 & 0.3696 & 0.5189 & 2.9523   \\
  $10^{-1}$  & 0.0004 & 0.0717 & 0.0022  & 0.1833 & 0.0055  & 0.3696 & 0.5189 & 2.9523 \\
  $10^{-2}$   & 0.0004 & 0.0717 &  0.0022  & 0.1833 & 0.0055  & 0.3696 & 0.5189 & 2.9523 \\
  $10^{-3}$   & 0.0004 & 0.0717 &  0.0022  & 0.1833 & 0.0055  & 0.3696 & 0.5189 & 2.9523 \\
  $10^{-4}$  & 0.0004 & 0.0717 &  0.0022  & 0.1833 & 0.0055  & 0.3696 & 0.5189 & 2.9523 \\
  $10^{-5}$  & 0.0004 & 0.0717 &  0.0022  & 0.1833 & 0.0055  & 0.3696 & 0.5189 & 2.9523 \\ \hline
  \end{tabular}
  \end{table}
All the tables show, for both the Taylor--Hood and the Scott--Vogelius element, the discretization error $e_h=\|y-y_h\|_{L^2(\Omega)^d}$ remains unchanged when varying the viscosity parameter $\nu$, with and without the correction. 
The correction does, however, reduce the divergence error $\eta_h=\|\operatorname{div}(y_h)\|_{L^2(\Omega)}$ for the Scott--Vogelius element, while
for Taylor--Hood the divergence error remains unchanged. 
We conclude that the compatibility correction improves the pointwise divergence-freedom of the discrete velocity where applicable (i.\,e. for Scott--Vogelius), but has no measurable
effect on the pressure robustness itself, for either element pair.

\appendix

\section{Counterexample}

In this section, we provide an example that the condition $\langle\u,n\rangle_\Gamma=0$ does not imply $\langle \Ih_h\u,n\rangle_\Gamma=0$.
\begin{example}[$\Ih_h:L^2(\Gamma)\to Y_h^\partial\setminus\tilde Y_h^\partial$]\label{bsp:counterexampleDGL}
Let $\Omega=(0,1)^2$. The function $\u:\Gamma\to\R^2$, 
\[
\u(x_1,x_2)=\begin{cases}(1, 0)^T & \text{if } \tfrac12\le x_1<1\text{ and }x_2=1 \\ (0,0)^T&\text{else,}\end{cases}
\]
satisfies $\u\in L^2(\Gamma)^2$ and 
\[
\langle\u,n\rangle_\Gamma=0.
\]
Let $\Ih_h:L^2(\Gamma)^2\to Y_h^\partial$ be the $L^2(\Gamma)$-projection or the Carstensen interpolant. We are going to show that
\[
\langle \Ih_h\u,n\rangle_\Gamma\not=0.
\]
To this end, we introduce the nodes
\[
a_1=(0,0)^T, \quad a_2=(1,0)^T, \quad a_3=(1,1)^T, \quad a_4=(0,1)^T, 
\]
the boundary elements
\[
E_1=(a_1,a_2)^T, \quad E_2=(a_2,a_3)^T, \quad E_3=(a_3,a_4)^T, \quad E_4=(a_4,a_1)^T, 
\]
and the triangulation 
\[
\mathcal{E}_h=\{E_1,E_2,E_3,E_4\}
\]
of the boundary. We also define the space $Y_h^\partial$ as the space of continuous, piecewise linear functions over the triangulation $\mathcal{E}_h$. Let $\varphi_j$, $j=1,\ldots,4$, denote scalar piecewise linear functions with $\varphi_j(a_i)=\delta_{ij}$ such that
\[
 Y_h^\partial=\left\{v_h=\sum_{j=1}^4(v_{1,j},v_{2,j})^T\varphi_j,\ v_{1,j},v_{2,j}\in\R\right\}.
\]
The $L^2(\Gamma)^2$-projection $\Ih_h\u=\sum_{j=1}^4(\u_{1,j},\u_{2,j})^T\varphi_j$ of $\u\in L^2(\Gamma)^2$ is defined via 
\[
  \langle\Ih_h\u,v_h\rangle_\Gamma=
  \langle\u,v_h\rangle_\Gamma\quad\forall v_h\in Y_h^\partial.
\]
This means that the coefficients $\u_{1,j},\u_{2,j}$ solve the systems
\[
  \begin{pmatrix}
  \langle\varphi_1,\varphi_1\rangle_\Gamma & \langle\varphi_2,\varphi_1\rangle_\Gamma & \langle\varphi_3,\varphi_1\rangle_\Gamma & \langle\varphi_4,\varphi_1\rangle_\Gamma \\
  \langle\varphi_1,\varphi_2\rangle_\Gamma & \langle\varphi_2,\varphi_2\rangle_\Gamma & \langle\varphi_3,\varphi_2\rangle_\Gamma & \langle\varphi_4,\varphi_2\rangle_\Gamma \\
  \langle\varphi_1,\varphi_3\rangle_\Gamma & \langle\varphi_2,\varphi_3\rangle_\Gamma & \langle\varphi_3,\varphi_3\rangle_\Gamma & \langle\varphi_4,\varphi_3\rangle_\Gamma \\
  \langle\varphi_1,\varphi_4\rangle_\Gamma & \langle\varphi_2,\varphi_4\rangle_\Gamma & \langle\varphi_3,\varphi_4\rangle_\Gamma & \langle\varphi_4,\varphi_4\rangle_\Gamma 
  \end{pmatrix}
  \begin{pmatrix}
  \u_{i,1} \\ \u_{i,2} \\ \u_{i,3} \\ \u_{i,4} 
  \end{pmatrix} =
  \begin{pmatrix}
  \langle \u_i,\varphi_1\rangle_\Gamma \\ \langle \u_i,\varphi_2\rangle_\Gamma \\ \langle \u_i,\varphi_3\rangle_\Gamma \\ \langle \u_i,\varphi_4 \rangle_\Gamma
  \end{pmatrix},\quad i=1,2.
\]
In detail, we obtain
\[
  \frac16\begin{pmatrix}
  4&1&0&1\\1&4&1&0\\0&1&4&1\\1&0&1&4 
  \end{pmatrix}
  \begin{pmatrix}
  \u_{1,1} \\ \u_{1,2} \\ \u_{1,3} \\ \u_{1,4} 
  \end{pmatrix} = \frac18
  \begin{pmatrix}
  0\\0\\3\\1 
  \end{pmatrix},\quad 
  \frac16\begin{pmatrix}
  4&1&0&1\\1&4&1&0\\0&1&4&1\\1&0&1&4 
  \end{pmatrix}
  \begin{pmatrix}
  \u_{2,1} \\ \u_{2,2} \\ \u_{2,3} \\ \u_{2,4} 
  \end{pmatrix} =
  \begin{pmatrix}
  0\\0\\0\\0 
  \end{pmatrix},
\]
such that
\[
  \Ih_h\u=\frac1{32}\left[
  \begin{pmatrix}1\\0\end{pmatrix}\varphi_1 +
  \begin{pmatrix}-5\\0\end{pmatrix}\varphi_2 +
  \begin{pmatrix}19\\0\end{pmatrix}\varphi_3 +
  \begin{pmatrix}1\\0\end{pmatrix}\varphi_4
  \right],
\]
but 
\[
  \langle \Ih_h\u,n\rangle_\Gamma=\frac1{32}\left(
  \int_{E_2}(-5\varphi_2+19\varphi_3) - \int_{E_4}(\varphi_1+\varphi_4)
  \right)=\frac{7-1}{32}=\frac3{16}\not=0.
\]
In case that $\Ih_h$ is the Carstensen interpolant, the coefficients are defined via
\[
\u_{i,j}=\frac{\langle\u_i,\varphi_j\rangle_\Gamma}{\langle1,\varphi_j\rangle_\Gamma}
\]
such that we obtain in the present case
\[
  \Ih_h\u=\frac18\left[
  \begin{pmatrix}0\\0\end{pmatrix}\varphi_1 +
  \begin{pmatrix}0\\0\end{pmatrix}\varphi_2 +
  \begin{pmatrix}3\\0\end{pmatrix}\varphi_3 +
  \begin{pmatrix}1\\0\end{pmatrix}\varphi_4
  \right]
\]
and as a result
\[
  \langle \Ih_h\u,n\rangle_\Gamma=\frac18\left(
  \int_{E_2}3\varphi_3 - \int_{E_4}\varphi_4
  \right)=\frac18\left(\frac32-\frac12\right)=\frac18\not=0.
\]
Hence both interpolants do not map into $\tilde Y_h^\partial$.\qed
\end{example}

\section*{Acknowledgment}

We would like to thank Hannes Meinlschmidt for the fruitful discussions on interpolation spaces and Tabea Tscherpel for bringing their paper \cite{EickmannScottTscherpel:25} to our attention.

\bibliographystyle{alphaurl}
\bibliography{stokes}

\end{document}